\definecolor{colora}{RGB}{128,66,21}
\definecolor{colorb}{RGB}{21,66,128}
\renewcommand{\epsilon}{\varepsilon}
\numberwithin{equation}{section}
\newtheoremstyle{thmlemcorr}{10pt}{10pt}{\itshape}{}{\bfseries}{.}{10pt}{{\thmname{#1}\thmnumber{ #2}\thmnote{ (#3)}}}
\newtheoremstyle{thmlemcorr*}{10pt}{10pt}{\itshape}{}{\bfseries}{.}\newline{{\thmname{#1}\thmnumber{ #2}\thmnote{ (#3)}}}
\newtheoremstyle{defi}{10pt}{10pt}{\itshape}{}{\bfseries}{.}{10pt}{{\thmname{#1}\thmnumber{ #2}\thmnote{ (#3)}}}
\newtheoremstyle{remexample}{10pt}{10pt}{}{}{\bfseries}{.}{10pt}{{\thmname{#1}\thmnumber{ #2}\thmnote{ (#3)}}}
\newtheoremstyle{ass}{10pt}{10pt}{}{}{\bfseries}{.}{10pt}{{\thmname{#1}\thmnumber{ A#2}\thmnote{ (#3)}}}
\theoremstyle{thmlemcorr}
\newtheorem{theorem}{Theorem}
\numberwithin{theorem}{section}
\newtheorem{lemma}[theorem]{Lemma}
\newtheorem{corollary}[theorem]{Corollary}
\newtheorem{proposition}[theorem]{Proposition}
\theoremstyle{thmlemcorr*}
\newtheorem{theorem*}{Theorem}
\newtheorem{lemma*}[theorem]{Lemma}
\newtheorem{corollary*}[theorem]{Corollary}
\newtheorem{proposition*}[theorem]{Proposition}
\newtheorem{problem*}[theorem]{Problem}
\newtheorem{conjecture*}[theorem]{Conjecture}
\theoremstyle{defi}
\newtheorem{definition}[theorem]{Definition}
\theoremstyle{remexample}
\newenvironment{remark}
  {\pushQED{\qed}\remarkx}
  {\popQED\endremarkx}
\theoremstyle{ass}
\newcommand{\Fcal}{\mathcal{F}}
\newcommand{\Gcal}{\mathcal{G}}
\newcommand{\Ical}{\mathcal{I}}
\newcommand{\Jcal}{\mathcal{J}}
\newcommand{\Pcal}{\mathcal{P}}
\newcommand{\Qcal}{\mathcal{Q}}
\newcommand{\Scal}{\mathcal{S}}
\DeclareMathOperator{\esssup}{ess\,sup}
\DeclareMathOperator{\supp}{supp}
\newcommand{\normb}[1]{\bigl\|#1\bigr\|}
\newcommand{\abslr}[1]{\left|#1\right|}
\newcommand{\absb}[1]{\bigl|#1\bigr|}
\newcommand{\N}{\mathbb{N}}
\newcommand{\R}{\mathbb{R}}
\newcommand{\C}{\mathbb{C}}
\newcommand{\weakly}{\rightharpoonup}
\newcommand{\eps}{\epsilon}
\newcommand{\phirho}{\varphi^{\rho}}
\DeclareMathOperator{\Div}{div}
\def\XXint#1#2#3{{\setbox0=\hbox{$#1{#2#3}{\int}$}
\vcenter{\hbox{$#2#3$}}\kern-.5\wd0}}
\DeclarePairedDelimiter\abs{\lvert}{\rvert}
\DeclarePairedDelimiter{\norm}{\lVert}{\rVert}
\DeclarePairedDelimiter{\inner}{\langle}{\rangle}
\newcommand{\Rn}{\R^{n}}
\renewcommand{\phi}{\varphi}
\newcommand{\dx}{\, dx}
\newcommand{\F}{\mathcal{F}}
\newcommand{\weakto}{\rightharpoonup}
\def\XXint#1#2#3{{\setbox0=\hbox{$#1{#2#3}{\int}$}
     \vcenter{\hbox{$#2#3$}}\kern-.5\wd0}}
\newcommand{\Rmn}{\mathbb{R}^{m \times n}}
\newcommand{\Lipb}{\mathrm{Lip}_{b}}
\newcommand{\Lip}{\mathrm{Lip}}
\g@addto@macro\bfseries{\boldmath}
\newcommand{\Hspd}{H^{s,p,\delta}}
\newcommand{\Hspdgm}{H^{s,p,\delta}_g(\Omega;\R^m)}
\DeclareMathOperator{\diver}{div}
\renewcommand{\O}{\Omega}
\renewcommand{\d}{\delta}
\newcommand{\f}{\varphi}
\def\XXint#1#2#3{{\setbox0=\hbox{$#1{#2#3}{\int}$}
		\vcenter{\hbox{$#2#3$}}\kern-.5\wd0}}
\title[A variational theory involving finite-horizon fractional gradients]{A variational theory for integral functionals involving finite-horizon fractional gradients}
\author{Javier Cueto}
\address{Department of Mathematics, University of Nebraska-Lincoln, NE, USA }
\email{jcuetogarcia2@unl.edu}
\author{Carolin Kreisbeck}
\address{Mathematisch-Geographische Fakult\"at, Katholische Universit\"at Eichst\"att-Ingolstadt, Osten\-stra{\ss}e 28, 85072 Eichst\"att, Germany}
\email{carolin.kreisbeck@ku.de}
\author{Hidde Sch\"{o}nberger}
\address{Mathematisch-Geographische Fakult\"at, Katholische Universit\"at Eichst\"att-Ingolstadt, Osten\-stra{\ss}e 28, 85072 Eichst\"att, Germany}
\email{hidde.schoenberger@ku.de}
\begin{document}

\maketitle

\thispagestyle{empty}
\begin{abstract} 
The center of interest in this work are variational problems with integral functionals depending on nonlocal gradients with finite horizon that correspond to truncated versions of the Riesz fractional gradient.
We contribute several new aspects to both the existence theory of these problems and the study of their asymptotic behavior. Our overall proof strategy builds on finding suitable translation operators that allow to switch between the three types of gradients: classical, fractional, and nonlocal. These provide useful technical tools for transferring results from one setting to the other. 
Based on this approach, we show that quasiconvexity, which is the natural convexity notion in the classical calculus of variations, gives a necessary and sufficient condition for the weak lower semicontinuity of the nonlocal functionals as well. As a consequence of a general $\Gamma$-convergence statement, we obtain relaxation and homogenization results. The analysis of the limiting behavior for varying fractional parameters yields, in particular, a rigorous localization with a classical local limit model.

\vspace{8pt}

\noindent\textsc{MSC (2020): 49J45, 35R11} 
\vspace{8pt}

\noindent\textsc{Keywords:} nonlocal variational problems, fractional and nonlocal gradients, nonlocal function spaces, weak lower semicontinuity, quasiconvexity, $\Gamma$-convergence, homogenization, localization
\vspace{8pt}

\noindent\textsc{Date:} \today.
\end{abstract}
	
	\section{Introduction}\label{sec:intro} 
Nonlocality has long been a recurring theme in the calculus of variations, appearing in various facets and applications. When modeling phenomena in nature and technology, nonlocal operators, whose values result from integrating over a neighborhood, have become a popular alternative to differential operators.  A main advantage of this derivative-free approach is that it allows functions to be less regular and, therefore, makes it possible to capture discontinuity effects, and also long-range interactions are naturally included. In the context of mechanics, this is exploited~in peridynamic modeling \cite{Sil00, MeD15} or to cover fracture and cavitation of deformed elastic materials \cite{BeCuMC, BeCuMC22b}.
From the analytical viewpoint, dealing with nonlocality brings along new mathematical challenges, since it is intrinsically opposed to the standard techniques for classical variational problems. And yet, local and nonlocal problems can be closely intertwined: while localization causes nonlocal features to vanish \cite{MeD15, MeS, BeMCPe},  they can, on the other hand, arise from local ones  e.g.,~through  limit processes such as homogenization and disrete-to-continuum passages~\cite{Bra00, BeB98}.

In a recent series of works, different authors have studied problems involving integral functionals that depend instead of usual gradients on fractional-order ones through the Riesz fractional gradients~\cite{Shieh1, Shieh2, BeCuMC, KrS22}. Even though the latter had appeared in the literature before~\cite{Hor59}, Shieh \& Spector brought it back into the spotlight in~\cite{Shieh1, Shieh2} and discussed properties of the associated fractional Sobolev spaces, which are equivalent to the Bessel potential spaces, see also~\cite{Comi1,Comi3,KrS22,BeCuMC}.  In contrast to the standard fractional Sobolev spaces defined via Gagliardo semi-norms, these spaces have a distributional character, and are, therefore, particularly well-suited for variational problems. Another asset is that the Riesz fractional gradient enjoys a unique combination of desirable homogeneity and invariance properties as shown by \v{S}ilhav\'y in~\cite{Silhavy2019}, which makes it the natural choice of a fractional derivative among operators with infinite interaction range. Motivated by mechanical models of hyperelastic materials, which call for operators on bounded domains with finite interaction, Bellido, Cueto~\& Mora-Corral~\cite{BeCuMC22} recently proposed to consider nonlocal operators that result from the Riesz fractional gradient by truncation with a suitable cut-off function. This is the same setting we are adopting in the following. \medskip

Overall, this paper deals with variational integrals in the truncated framework of~\cite{BeCuMC22}, for which we contribute new insights into the existence theory of minimizers as well as their asymptotic analysis.
More precisely, the set-up is as follows: Let $\Omega\subset \R^n$ be a bounded open set, $s\in (0,1)$ the fractional-order parameter, $\delta>0$ the horizon, which stipulates the maximal length scale of the interaction distance between points, and $\Omega_{\delta}=\Omega+B(0, \delta)$ the nonlocal closure of $\Omega$.

We consider functionals of the form
\begin{align}\label{nonlocalfunctional_intro}
		\mathcal{F}(u)=\int_{\O} f\bigl(x,u(x), D^s_\delta u(x)\bigr) \, dx,
	\end{align}
where the integrand function $f:\Omega\times \R^m\times \R^{m\times n}\to \R$ is Carath\'{e}odory with standard $p$-growth and $p$-coercivity for some $1<p<\infty$ and $D^s_\delta u$ is the truncated Riesz fractional gradient (see~\eqref{def_Ddeltas} below)
for functions $u$ in a suitable linear subspace of $L^p(\Omega_\delta;\R^m)$. This function space, which is called $H^{s, p,\delta}(\Omega;\R^m)$ and introduced in~Definition~\ref{de:Hspd}, is defined in analogy to the classical Sobolev spaces 
by requiring that the nonlocal gradient is $p$-integrable. In addition, we assume volumetric-type boundary conditions by prescribing complementary values in a tubular neighborhood or collar of radius $2\delta$ around $\O$; in the basic case of zero complementary values, we write $H_0^{s, p, \delta}(\Omega;\R^m)$ for the set of functions admissible for~\eqref{nonlocalfunctional_intro}. 

It remains to specify the nonlocal gradient $D^s_\delta u$. With $\Gcal_\rho$ a general nonlocal gradient with kernel $\rho$, that is, 
\begin{equation} \label{eq: general nonlocal gradient}
\Gcal_\rho u(x)= \int_{\R^n}  \frac{u(x)-u(y)}{|x-y|} \frac{x-y}{|x-y|}\rho(x-y)\, dy,
\end{equation}
whenever the integral exists for a function $u:\R^n\to \R$, we first recall that the Riesz fractional gradient is defined as the nonlocal gradient with the Riesz potential kernel $I_{1-s}$, i.e.,
\begin{align*}
D^s u \,\propto\, \Gcal_{I_{1-s}} u \quad \text{with\quad $I_{1-s}\propto \frac{1}{|\cdot|^{n+s-1}}$}. 
\end{align*}
To introduce the truncated version, let us consider a certain smooth, radial cut-off function $w_\delta:\R^n\to [0, \infty)$ supported in a ball of radius $\delta$ around the origin. Then,
\begin{align}\label{def_Ddeltas}
D_\delta^s u = \Gcal_{\rho_\delta^s} u \quad \text{with\quad $\rho_\delta^s\; \propto\; w_\delta I_{1-s}$.}
\end{align}
Throughout the paper, we refer to $D^s_\d$ simply as nonlocal gradient to keep the terminology short. For more details on these definitions of nonlocal and fractional gradients, we refer the reader to Section~\ref{subsec:nonlocalcalculus}.  Alternative choices for the kernel function in \eqref{eq: general nonlocal gradient} can be found in the literature, for example, kernels defined on half-balls \cite{LeeDu,HaT23}, and variable horizon kernels \cite{TaoTianDu,DuMenTian,SiLiSel}. \medskip

Our methodology for proving the results about the functionals~\eqref{nonlocalfunctional_intro} builds substantially on their relation with classical functionals with a dependence on the usual gradient, namely
\begin{align}\label{Fgradient}
	v\mapsto \int_{\O} f(x,v(x), \nabla v(x)) \, dx,
	\end{align} 
	and also the relation with the fractional variational integrals 
	\begin{align}\label{Ffractional} 
	u\mapsto  \int_{\R^n} f(x,u(x), D^su(x)) \, dx
	\end{align}
	 provides useful insights. To set a foundation for a comparison of $\Fcal$ with~\eqref{Fgradient} and~\eqref{Ffractional}, we discuss the connection between the three differential operators
\begin{center} 
classical gradient $\nabla$, fractional gradient $D^s$, nonlocal gradient $D_\delta^s$, 
\end{center}
 and the associated Sobolev-type function spaces 
 \begin{center}
$W^{1,p}(\R^n)$, \ $H^{s, p}(\R^n)$, \ $H^{s, p, \delta}(\R^n)$,  
\end{center}
respectively; for an illustrative overview, see Figure~\ref{fig:connections}.\smallskip 
\begin{figure}\label{fig:connections}
\centering
\tikzset{every picture/.style={line width=0.75pt}} 
\resizebox{11cm}{!}{
\begin{tikzpicture}[x=0.75pt,y=0.75pt,yscale=-1,xscale=1]
\draw  [color={rgb, 255:red, 0; green, 0; blue, 0 }  ,draw opacity=1 ] (240,30) -- (400,30) -- (400,70) -- (240,70) -- cycle ;
\draw  [color={rgb, 255:red, 0; green, 0; blue, 0 }  ,draw opacity=1 ] (70,180) -- (230,180) -- (230,220) -- (70,220) -- cycle ;
\draw  [color={rgb, 255:red, 0; green, 0; blue, 0 }  ,draw opacity=1 ] (410,180) -- (570,180) -- (570,220) -- (410,220) -- cycle ;
\draw    (400,70) .. controls (470,77) and (510,140) .. (510,180) ;
\draw [shift={(510,180)}, rotate = 267] [fill={rgb, 255:red, 0; green, 0; blue, 0 }  ][line width=0.08]  [draw opacity=0] (8.93,-4.29) -- (0,0) -- (8.93,4.29) -- cycle    ;
\draw    (480,180) .. controls (420,165) and (384,119) .. (390,70) ;
\draw [shift={(390,70)}, rotate = 93] [fill={rgb, 255:red, 0; green, 0; blue, 0 }  ][line width=0.08]  [draw opacity=0] (8.93,-4.29) -- (0,0) -- (8.93,4.29) -- cycle    ;
\draw    (240,70) .. controls (170,77) and (130,140) .. (130,180) ;
\draw [shift={(130,180)}, rotate = 273] [fill={rgb, 255:red, 0; green, 0; blue, 0 }  ][line width=0.08]  [draw opacity=0] (8.93,-4.29) -- (0,0) -- (8.93,4.29) -- cycle    ;
\draw    (160,180) .. controls (220,165) and (256,119) .. (250,70) ;
\draw [shift={(250,70)}, rotate = 87] [fill={rgb, 255:red, 0; green, 0; blue, 0 }  ][line width=0.08]  [draw opacity=0] (8.93,-4.29) -- (0,0) -- (8.93,4.29) -- cycle    ;
\draw    (230,180) .. controls (260,150) and (380,150) .. (410,180) ;
\draw [shift={(410,180)}, rotate = 215.57] [fill={rgb, 255:red, 0; green, 0; blue, 0 }  ][line width=0.08]  [draw opacity=0] (8.93,-4.29) -- (0,0) -- (8.93,4.29) -- cycle    ;
\draw    (410,220) .. controls (380,250) and (260,250) .. (230,220) ;
\draw [shift={(230,220)}, rotate = 38] [fill={rgb, 255:red, 0; green, 0; blue, 0 }  ][line width=0.08]  [draw opacity=0] (8.93,-4.29) -- (0,0) -- (8.93,4.29) -- cycle    ;

\draw (255,41) node [anchor=north west][inner sep=0.75pt]   [align=left] {$\displaystyle \nabla v \ \ \ \ v \in W^{1,p}\left(\mathbb{R}^{n}\right)$};
\draw (78,189) node [anchor=north west][inner sep=0.75pt]   [align=left] {$\displaystyle D_{\delta }^{s} u\ \ \ \ u \in H^{s,p,\delta }( \R^n )$};
\draw (422,192) node [anchor=north west][inner sep=0.75pt]   [align=left] {$\displaystyle D^{s} u\ \ \ \ u \in  H^{s,p}\left(\mathbb{R}^{n}\right)$};
\draw (410,94) node [anchor=north west][inner sep=0.75pt]  [rotate=-43,xslant=0] [align=left] {$\displaystyle v=^\dag I_{1-s} *u$};
\draw (465,56) node [anchor=north west][inner sep=0.75pt]  [rotate=-43.8] [align=left] {$\displaystyle u=( -\Delta )^{\frac{1-s}{2}} v$};
\draw (176,150) node [anchor=north west][inner sep=0.75pt]  [rotate=-317] [align=left] {$\displaystyle v= Q_{\delta }^{s} *u$};
\draw (130,107) node [anchor=north west][inner sep=0.75pt]  [rotate=-316.2] [align=left] {$\displaystyle u=\Pcal^s_\d v$}; 
\draw (247,192) node [anchor=north west][inner sep=0.75pt]   [align=left] {$\displaystyle {\textstyle D_{\delta }^{s} u-D^{s} u= \nabla R_{\delta }^{s} *u}$};
\end{tikzpicture}}
\caption{Illustration of the relations between classical, fractional, and nonlocal gradients, which enable the transfer of results between the corresponding settings. $^\dag$ When $I_{1-s} * u$ is well-defined.}
\end{figure}
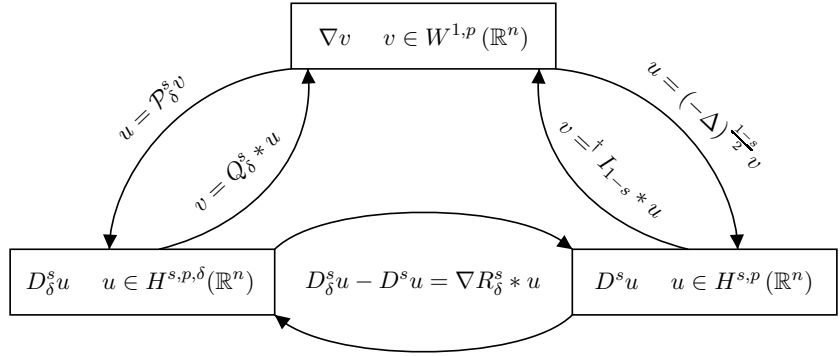

\textit{ Fractional vs.~classical:} 
For smooth compactly supported functions $\varphi\in C_c^\infty(\R^n)$, it is by now well-known that
  \begin{align} \label{eq: fractional to classical gradients}
		D^s \f= \nabla (I_{1-s} \ast \f)\quad \text{ and } \quad \nabla \f=D^s(-\Delta)^{\frac{1-s}{2}} \f,
\end{align}
where $I_{1-s}$ is the Riesz potential and $(-\Delta)^{\frac{1-s}{2}}$ is the fractional Laplacian of order $1-s$, see e.g.,~\cite{Shieh1, Silhavy2019}. In~\cite[Proposition~3.1]{KrS22}, two of the authors extended these identities to the setting of Sobolev and fractional Sobolev functions, showing that for any $u\in H^{s, p}(\R^n)$, there exists a $v\in W_{\rm loc}^{1, p}(\R^n)$ such that $\nabla v=D^su$, and for every $u\in H^{s, p}(\R^n)$, one can find a $v\in W^{1, p}(\R^n)$ with $D^su=\nabla v$.
The latter follows immediately from the observation that $(-\Delta)^{1-s/2}:W^{1, p}(\R^n)\to H^{s,p}(\R^n)$ is a bounded linear operator. This way, one can translate from the fractional gradient to the classical one and vice-versa, up to a gap related to an issue of local integrability. For a similar statement in the space of fractional $BV$-functions, we refer to~\cite[Lemma~3.28]{Comi1}.

 \smallskip
\textit{Nonlocal vs.~classical:} Providing analogous translation formulas between the nonlocal and classical setting is one of the major steps in the analysis of this paper. The fact that $D^s_\delta u$ is defined over a bounded domain brings about some technical complications compared with $D^su$; for instance, as opposed to $D^s u$, the operator $D^s_\d u$ is no longer homogeneous and it does not enjoy a semigroup property, which the fractional one inherits from its relationship with the Riesz potential. The foundations for finding a suitable replacement for the generalization of~\eqref{eq: fractional to classical gradients}, were laid by Bellido, Cueto \& Mora-Corral~\cite{BeCuMC22} (see also~\cite{BeCuMC22b}). They identified an integrable finite-horizon counterpart of the Riesz potential kernel, called $Q_\delta^s$, which provides one of the directions of the translation mechanism for smooth functions. For the other direction, we heuristically invert the convolution with $Q^s_\d$ in Fourier space, i.e., we consider the operator
\[
\mathcal{P}^s_\d\phi = \left(\frac{\widehat{\phi}}{\widehat{Q}^s_\d}\right)^{\vee}
\]
for any Schwartz function $\phi$. This operator can be considered as an analogue of the fractional Laplacian of order $(1-s)/2$ in the nonlocal framework. Another way of interpreting $\Pcal^s_\d\phi$ is as the convolution of the gradient of $\phi$ with the kernel from the nonlocal fundamental theorem of calculus in \cite[Theorem 4.5]{BeCuMC22}, see ~Remark~\ref{rem:connection}\,d)

Here, we prove that the convolution with $Q^s_\d$ and $\Pcal^s_\d$ can both be extended to the Sobolev spaces in such a way that they are each other's inverses. This gives a perfect isomorphism between $H^{s,p,\d}(\R^n)$ and $W^{1,p}(\R^n)$ with the property that
for any $u\in H^{s, p, \delta}(\R^n)$ and $v\in W^{1, p}(\R^n)$
\begin{align}\label{translation_intro} 
D_\delta^s u=
\nabla (Q_\delta^s\ast u)  \qquad \text{and} \qquad \nabla v= D^s_\delta\Pcal^s_\d v,
\end{align}
see Theorem~\ref{prop:connection} and the discussion thereafter. It is noteworthy that in the fractional case there is no such isomorphism, since the Riesz potential is only locally integrable as opposed to $Q^s_\d$.
\smallskip

\textit{Fractional vs.~nonlocal:} A comparison between the kernels $Q_\delta^s$ and $I_{1-s}$, where $R_\delta^s$ denotes their difference, gives us a basic and direct way for switching between the fractional and nonlocal setting. Indeed, we show in Section~\ref{subsec:connection_localfrac}, that 
\begin{align}\label{comparison_nonlocalfrac_intro}
D_\delta^s u = D^su + \nabla R_\delta^s\ast u,
\end{align} for all $u\in H^{s, p, \delta}(\R^n)=H^{s, p}(\R^n)$, 
where $\nabla R_\delta^s \in L^1(\R^n)\cap C^\infty(\R^n)$.
\medskip

Having the translation mechanism of~\eqref{translation_intro} and \eqref{comparison_nonlocalfrac_intro} at hand paves the way for shifting results between the three variational settings. Note, however, that not all results can be directly carried over, since boundary conditions are not preserved in the translation procedure and problems involving both the function and its nonlocal gradient require additional techniques.
Here, we list and discuss the main contributions of this paper to the existence and asymptotic analysis of the functionals $\Fcal$ in~\eqref{nonlocalfunctional_intro}:\smallskip

\textit{(1) Characterization of weak lower semicontinuity of $\Fcal$.}  One of the crucial steps to conclude the existence of minimizers of integral functionals, like $\Fcal$ or those in~\eqref{Fgradient} or~\eqref{Ffractional}, via the direct method, is to establish weak lower semicontinuity. 
A well-known fundamental result from the vectorial calculus of variations with roots in the 1950s states that, for the functionals~\eqref{Fgradient}, quasiconvexity (in the sense of Morrey) regarding the third variable of $f$ is necessary and sufficient for weak lower semicontinuity in $W^{1, p}(\Omega;\R^m)$, see~\cite{AcF84, Mey65, Morrey, Mar85}.  In the fractional setting~\eqref{Ffractional}, the efforts are more recent.  After convexity~\cite{Shieh1} and polyconvexity~\cite{BeCuMC} had been identified as sufficient conditions for weak lower semicontinuity in $H_0^{s, p}(\Omega;\R^m)$, the problem of characterization was solved in~\cite[Theorem~1.1]{KrS22}. Interestingly, the correct condition on $f$ is the same as in the local case, namely quasiconvexity.

We complement the picture in Theorem~\ref{theo:characterization}, by proving that, altogether, quasiconvexity is the intrinsic convexity notion in all three situations. In fact,
\begin{align}\label{Fwlsc_intro}
\begin{split}
&\text{$\Fcal$ is weakly lower semicontinuous in $H_0^{s, p, \delta}(\Omega;\R^m)$}\text{ if and only if }\\ &\qquad \qquad \qquad \text{$f(x, z, \cdot)$ is quasiconvex for a.e.~$x\in \Omega_{-\delta}$ and all $z\in \R^m$; }
\end{split}
\end{align}
 note that, due to a boundary layer effect, which yields even strong $L^p$-convergence of weakly convergent sequences in $H_0^{s,p,\delta}(\Omega;\R^m)$, quasiconvexity is not required in the collar. Moreover, we introduce a nonlocal notion of quasiconvexity defined through testing with nonlocal gradients that turns out to be equivalent with quasiconvexity, cf.~Remark~\ref{rem:dsdquasiconvexity}.  

The proof of~\eqref{Fwlsc_intro} exploits the parallels between the nonlocal and fractional gradient in their relation to the standard one (cf.~\eqref{translation_intro} and~\eqref{eq: fractional to classical gradients}) by using similar arguments and techniques as in~\cite{KrS22}. An alternative proof strategy that reduces~\eqref{Fwlsc_intro} directly to the statement of~\cite[Theorem~1.1]{KrS22} via \eqref{comparison_nonlocalfrac_intro} is also possible, as we demonstrate under simplified assumptions. \medskip

\textit{(2) Variational convergence, homogenization and relaxation.} Considering sequences of nonlocal functionals $\{\Fcal_{f_j}\}_{j\in \N}$ as in~\eqref{nonlocalfunctional_intro} with specific integrand functions $f_j$, we study their asymptotic behavior as $j\to \infty$. The intention of finding a versatile method that makes $\Gamma$-convergence (see~\cite{Bra02, Dal93}) accessible to a number of cases and applications motivates the statement of Theorem~\ref{th:gamgeneral}. If we denote the counterparts of $\Fcal_{f_j}$ with dependence on classical gradients defined on $W^{1, p}(\Omega_{-\delta};\R^m)$ by $\Ical_{f_j}$, it says that the convergence of 
 $\{\Ical_{f_j}\}_{j\in \N}$ to a $\Gamma$-limit $\Ical_{f_\infty}$ as $j\to \infty$ along with the pointwise convergence of the integrals over the collar, $L^p(\Omega_\delta;\R^{m\times n})\ni V\mapsto \int_{\Omega\setminus \Omega_{-\delta}}f_j(x,V)\, dx$ yields 
 \begin{align*}
 \Gamma\text{-}\lim_{j\to \infty}\Fcal_{f_j} =\Fcal_{f_\infty};
 \end{align*}
 note that all $\Gamma$-limits are taken with respect to the strong $L^p$-topology. 

To demonstrate how this observation can help to carry various $\Gamma$-convergence results in the literature from the local to the nonlocal setting, we choose homogenization theory as a specific case. Indeed, Corollary~\ref{cor:homogenization} shows that the fundamental $\Gamma$-limit of~\cite{Mul87, Bra85}, where the homogenized functional is again of integral form with integrand determined by a multi-cell formula, gives rise to a new homogenization limit for problems involving nonlocal gradients.
As an immediate consequence of this homogenization, one can obtain relaxation of nonlocal functions $\Fcal$, that is, a representation for their lower semicontinuous envelopes. In the case of a homogeneous integrand $f$, the latter arises from the quasiconvexification of $f$ on $\Omega_{-\delta}$, while $f$ remains unchanged in $\Omega\setminus \Omega_{-\delta}$, see Corollary~\ref{cor:relaxation}.

 \medskip
\textit{(3) Asymptotics for varying fractional order.}  
It is a natural question to investigate the dependence of our nonlocal variational problems, in particular, their minimizers and minima, on the fractional order $s\in (0,1)$; for an analogous study for functionals of the type~\eqref{Ffractional}, see~\cite{BeCuMC21}. 
To this end, we take functionals as in~\eqref{nonlocalfunctional_intro}, with $f$ independent of the second variable and quasiconvex in the third one, and highlight the dependence of $s$ with a subscript index $\Fcal_s$. The functional $\Fcal_1$ can be defined in the same way with $D^1_\delta u:=\nabla u$ the classical gradient and $\Fcal_0$, after extension of the definition in~\eqref{def_Ddeltas} to $s=0$, lives on $L^p(\Omega_\delta;\R^m)$.

The main result in this context is Theorem~\ref{th:gamgeneral}, which says the following: 
\begin{center}
The sequence $\{\Fcal_s\}_{s}$ $\Gamma$-converges to $\Fcal_{s'}$ as $s\to s'\in [0,1]$;
\end{center} 
since sequential compactness of bounded-energy sequences holds strongly in $L^p(\Omega;\R^m)$ when $s'\in (0,1]$ and weakly in $L^p(\Omega;\R^m)$ if $s'=0$, it is natural to state the $\Gamma$-convergence results regarding the strong and weak topology, respectively, see Theorem~\ref{le:ordercompactness}. 
We point out that the limit $s\to 1$ provides a localization statement, and as such, establishes another interesting connection between classical local and nonlocal theories.

The proof of the above-mentioned compactness for bounded-energy sequences in nonlocal spaces of different order involves, besides the continuous dependence of the nonlocal gradient $D^s_\delta u$ on $s$ (see Lemma~\ref{le:orderconvergencesmooth}), also a new technical tool that is worth mentioning in its own right.  
This is the nonlocal Poincar\'e inequality with a constant independent of the fractional order presented in~Theorem~\ref{th:poincareindep}; we refer to recent progress on nonlocal Poincar\'{e}-type inequalities, for example, in problems involving radial kernels \cite{BeCuMC22, DuTi18} or asymmetric and inhomogeneous kernels \cite{HaT23,FossPoincare}.   
The difficulty in establishing a parameter-independent bound is the fact that the kernel in the nonlocal fundamental theorem of calculus from \cite[Theorem 4.5]{BeCuMC22} is implicitly defined via a Fourier transform, which makes it hard to isolate the dependence on $s$  in the proof of the Poincar\'{e} inequality from \cite[Theorem~6.2]{BeCuMC22}. Instead, we utilize a fine analysis of the decay of the Fourier transform of $Q^s_\d$, an application of the Mihlin-H\"{o}rmander multiplier theorem and an extension of the nonlocal fundamental theorem to the case $s=0$ (see~Proposition~\ref{prop:ftoczero}) to prove the Poincar\'{e} inequality with an $s$-independent constant. \smallskip

This manuscript is organized as follows. We begin in Section~\ref{sec:prelim} with notations and a detailed introduction to our set-up and nonlocal calculus. Moreover, we collect and establish the relevant technical tools, especially, the connections between classical, nonlocal and fractional gradients along with the corresponding translation keys. 
Section~\ref{sec:asymptotics} deals then with the asymptotics of the nonlocal gradient, and we derive as a main application a Poincar{\'e} inequality with a constant uniform in $s$, which opens the way for compactness results for sequences in nonlocal spaces of different order.
The variational results for the nonlocal integral functionals are proven from Section~\ref{section:wls} onwards, based on the comparison with the classical and fractional setting. First, we prove the characterization of weak lower semicontinuity in terms of quasiconvexity of the integrand and state an existence statement for minimizers of $\Fcal$ (see Corollary~\ref{theo:existenceminimizers}) based on it. In Section~\ref{section:homrelax}, we then provide a general $\Gamma$-convergence result, from which homogenization and relaxation can be deduced as corollaries. Finally, we prove the convergence of minimizers of the functionals $\{\Fcal_s\}_s$ for the limit $s\to s'\in [0,1]$ in Section~\ref{sec:Gamma}, showing, in particular, the localization to a classical local limit as $s\to 1$.

\section[Preliminaries]{Preliminaries and technical tools} \label{sec:prelim}
The aim of this section is to introduce the notation and several important definitions and tools regarding the nonlocal gradient and Sobolev spaces.

\subsection[Notation]{Notation} 

\subsubsection{General notation} 
Unless mentioned otherwise, $s\in (0,1)$ and $Y=(0,1)^n\subset \R^n$. We use $\R_{\infty}$ to denote $\R \cup \{\infty\}$. We write $\abs{x}=\left(\sum_{i=1}^n x_i^2\right)^{1/2}$ for the Euclidean norm of a vector $x=(x_1,\cdots,x_n)\in \R^n$ and similarly, $\abs{A}$ for the Frobenius norm of a matrix $A \in \Rmn$. The ball centered at $x \in \R^n$ and with radius $\rho>0$ is denoted by $B(x,\rho)=\{ y \in \R^n : \abs{x-y}<\rho\}$ and the distance between $x \in \R^n$ and a set $E\subset \R^n$ is written as $d(x,E)$. For an open set $\Omega \subset \R^n$ and $\d>0$, we write $\Omega_\delta$ for its nonlocal closure, that is,
\begin{align*}\label{Omegadelta}
\Omega_\delta= \Omega+B(0, \delta)=\{x \in \R^n\,:\, d(x,\Omega) <\delta\}.
\end{align*}
The complement of a set $E\subset \R^n$ is indicated by $E^c:=\R^n \setminus E$ and its closure by $\overline{E}$. The notation $E \Subset F$ for sets $E, F\subset \R^n$ means that $E$ is compactly contained in $F$, i.e., $\overline{E} \subset F$ and $\overline{E}$ is compact. Let 
\[
\mathbbm{1}_E(x) = \begin{cases} 1 &\text{for} \ x \in E,\\
0 &\text{otherwise},
\end{cases}\qquad x\in \R^n,
\]
be the indicator function of a set $E \subset \R^n$.

Let $U\subset \R^n$ be an open set. 
The notation $C_{c}^{\infty}(U)$ symbolizes the smooth functions $\varphi:U\to \R$ with compact support in $U \subset \R^n$.  Our convention is that functions in $C_c^\infty(U)$ are identified with their trivial extension to $\R^n$ by zero. Further, by $C^\infty(\R^n)$, $C_0(\R^n)$ and $\Scal(\R^n)$ we denote the space of smooth functions, continuous functions vanishing at infinity and Schwartz functions on $\R^n$, respectively. We utilize multi-index notation, in particular, we write $\partial^{\alpha}$ for the partial derivative with respect to a multi-index $\alpha \in \N^n_0$.

By $\Lipb(\R^n)$, we refer to all the functions $\psi:\R^n \to \R$ that are Lipschitz continuous and bounded on $\R^n$ and we write $\Lip(\psi)$ for the Lipschitz constant of $\psi$.

The Lebesgue measure of $U \subset \R^n$ is written $\abs{U}$ and the convolution of two functions $u,v:\R^n \to \R$ is denoted by $u*v$. If one of the functions is vector-valued, the convolution should be understood componentwise. We use the common notation for Lebesgue- and Sobolev-spaces, that is, $L^p(U)$ for $p\in [1, \infty]$ is the space of $p$-real-valued integrable functions on $U$ with the norm 
\[
\norm{u}_{L^p(U)}=\begin{cases}\displaystyle\left( \int_{U} \abs{u(x)}\dx\right)^{1/p} &\text{if} \ p \in [1,\infty),\\
\esssup_{x \in U} \abs{u(x)} &\text{if} \ p=\infty,
\end{cases}\qquad u\in L^p(U).
\]  
Moreover, $W^{1,p}(U)$ for $p\in [1, \infty]$ consists of all $L^p$-functions on $U$ with $p$-integrable weak derivatives, endowed with the norm
\[
\norm{u}_{W^{1,p}(U)}=\norm{u}_{L^p(U)}+\norm{\nabla u}_{L^p(U;\R^n)};
\]
here $\nabla u$ stands for the weak gradient of $u$. 

The functions that lie locally in $L^p$ and $W^{1, p}$ are denoted by $L^p_{\rm loc}(\R^n)$ and $W^{1,p}_{\rm loc}(\R^n)$. Besides, $W^{1,p}_{0}(U)$ stands for those functions in $W^{1,p}(U)$ with zero boundary value in the sense of the trace and $W^{1,\infty}_{\#}(Y)$ indicates the $Y$-periodic functions in $W^{1,\infty}(\R^n)$. 

In general, the spaces defined above can be extended componentwise to vector-valued functions. The target space is explicitly mentioned in the notation, like, for example, $L^p(U;\R^m)$. Whenever convenient, we identify a function on a subset of $\R^n$ with its trivial extension by zero. Finally, we use $C$ to denote a generic constant, which may change from one estimate to the next without further mention. If we wish to indicate the dependence of $C$ on certain quantities, we add them in brackets.
	
\subsubsection{Riesz potential and Fourier transform}
We recall the definition of Riesz potential. 
Given $0<s<n$, the Riesz potential kernel $I_s : \R^n \setminus \{0\} \to \R$ is
\begin{equation}\label{Rieszkernel}
I_s (x)= \gamma_{n,s}^{-1}\frac{1}{|x|^{n-s}},
\end{equation} 
where 
\[
\gamma_{n,s}=\frac{\pi^{\frac{n}{2}} \, 2^s \, \Gamma(\frac{s}{2})}{\Gamma(\frac{n-s}{2})}
\]
with $\Gamma$ denoting the Gamma function. For notational convenience, we also define $I_1:\R \setminus \{0\} \to \R$ as
\begin{equation}\label{Rieszkernel2}
I_1(x) = -\frac{1}{\pi}\log(\abs{x})
\end{equation}
when $n=1$. The Riesz potential of a locally integrable function $f$ is given via convolution as 
\begin{equation*}
I_s * f(x)=\frac{1}{\gamma_{n, s}} \int_{\R^n} \frac{f(y)}{|x-y|^{n-s}}dy,
\end{equation*}
whenever the integral exists for a.e.~$x \in \R^n$.

Since we will also deal with the use of the Fourier transform, we clarify here the notation we are going to use. For $f \in L^1(\R^n)$, we define the Fourier transform of $f$ as
\begin{equation*}
	\widehat{f}(\xi)=\int_{\Rn} f(x) \, e^{-2\pi i x \cdot \xi} \, dx \quad \xi \in \R^n.
\end{equation*}
Notice that this definition can also be used in the Schwartz space $\mathcal{S}(\R^n;\C)$, where it defines an isomorphism. By continuity and duality extensions, it also defines isomorphism on the spaces $L^2(\R^n;\C)$ and in the space of tempered distributions $\mathcal{S}'(\R^n;\C)$. Moreover, the inverse Fourier transform is denoted by $f^{\vee}$ and corresponds with $x \mapsto \widehat{f}(-x)$. Notable references in Fourier analysis are \cite{duon2000,Gra14a}.

\subsection[Nonlocal Calculus]{Nonlocal calculus and function spaces}\label{subsec:nonlocalcalculus}
In this section, we present the definition of the nonlocal gradient used throughout this paper, introduce the naturally associated function spaces, and collect several auxiliary results. A delicate issue is the choice of suitable boundary values, which is addressed below in Section~\ref{subsec:complementary}.

 In what follows, let $\d>0$ and $w_\d: \Rn\to [0,\infty)$ be a non-negative cut-off function satisfying these hypotheses:
\smallskip
\begin{enumerate}[label = (H\arabic*)]
	\item \label{itm:h1}$w_\d$ is radial, i.e., there is a $\overline{w}_\d:\R \to [0,\infty)$ such that $w_\d(x)=\overline{w}_\d(\abs{x})$ for $x \in \R^n$;\\[-0.3cm]
	\item \label{itm:h2}$w_\delta$ is smooth and compactly supported in $B(0, \delta)$, i.e., $w_\d \in C_c^\infty(B(0,\delta))$;\\[-0.3cm]
	\item \label{itm:h3}there is a constant $b_0\in (0,1)$ such that $w_\d =1$ on $B(0, b_0\delta)$;\\[-0.3cm]
	\item \label{itm:h4}$w_\delta$ is radially decreasing, that is, $w_\d (x) \geq w_\d (y)$ if $|x| \leq |y|$.
\end{enumerate}
\smallskip
In accordance with~\cite[Definition~3.1]{BeCuMC22}, we define the nonlocal gradient and divergence for smooth functions as follows: 
	 For $s \in [0,1)$, the nonlocal gradient of $\varphi\in C^{\infty}(\Rn)$ is given by
		\begin{equation} \label{eq: definition of nonlocal gradient}
			D_\delta^s \varphi(x)= c_{n,s} \int_{ \R^n} \frac{\varphi(x)-\varphi(y)}{|x-y|}\frac{x-y}{|x-y|}\frac{w_\d(x-y)}{|x-y|^{n+s-1}} \, dy \quad \text{for $x \in \R^n$, }
		\end{equation}
and the nonlocal divergence of $\psi \in C^{\infty} (\Rn;\Rn)$ is 
		\begin{equation}\label{eq: equiv integrals NL divergence}
			\diver_{\delta}^s \psi(x)= c_{n,s} \int_{ \R^n} \frac{\psi(x)-\psi(y)}{|x-y|}\cdot\frac{x-y}{|x-y|}\frac{w_\d(x-y)}{|x-y|^{n+s-1}} \, dy \quad \text{for $x \in \R^n$,}
		\end{equation}
	with the scaling constant 
	\begin{equation*}
		c_{n,s}:= \frac{\Gamma\left(\frac{n+s+1}{2}\right)}{\pi^{n/2}2^{-s}\Gamma\left(\frac{1-s}{2}\right)}.
	\end{equation*}
Note that the integral in \eqref{eq: definition of nonlocal gradient} is absolutely convergent given that $\varphi$ is in particular locally Lipschitz continuous and $w_{\d}(\cdot)/\abs{\cdot}^{n+s-1} \in L^1 (\Rn)$ with compact support. Moreover, the above definitions show that $\supp (D_\delta^s \varphi) \subset \supp (\varphi) +\overline{B (0, \d)}$ and Proposition~\ref{Prop: convolution with the classical gradient} below establishes $D^s_\d \phi \in C^{\infty}(\R^n;\R^n)$. Analogous observations hold for the nonlocal divergence.
	
\begin{remark}\label{rem:nonlocalgradprop}
a) Due to the radial symmetry of $w_\delta$ from \ref{itm:h1}, an equivalent way of expressing $D_\delta^s\varphi$ for $\varphi\in C^\infty(\R^n)$ is as
\begin{align}\label{nonlocalgrad_alter}
 D_\delta^s \varphi(x) =  \lim_{r \downarrow 0}\int_{B(x,r)^c} \varphi(y) d_\delta^s(x - y) \, dy \quad \text{for $x\in \R^n$,} 
\end{align}
with
\begin{align}\label{defddeltas}
 d_\delta^s(x) =-c_{n,s} \frac{xw_\delta(x)}{|x|^{n+s+1}} \quad\text{for $x\in \R^n\setminus\{0\}$.}
\end{align}
When $x \not \in \supp(\phi)$, this allows us to write $D^s_\d \phi(x) = (d^s_\d * \phi)(x)$.
\smallskip

b) It is straightforward to check for the nonlocal gradient that it is translation and rotation invariant, i.e.,
$$D_\delta^s\bigl(\varphi(\cdot +b)) = D_\delta^s \varphi(\cdot+b) \quad \text{and} \quad D_\delta^s\bigl(\varphi(R\,\cdot)) = R^{-1}D_\delta^s \varphi(R\,\cdot) $$ 
for all $\varphi\in C^\infty(\R^n)$, $b\in \R^n$ and $R\in \mathrm{O}(n)$. The rotation invariance relies on the radiality of $w_\delta$. If, in addition, $w_\delta(\cdot/\lambda)=w_{\lambda\delta}(\cdot)$ for all $\lambda>0$, then $D_\delta^s$ is also positively $s$-homogeneous in the sense that 
$$D_\delta^s\bigl(\varphi(\lambda\, \cdot)) =\lambda^s  D_{\delta/\lambda}^s \varphi(\lambda\,\cdot) $$ 
for all $\varphi\in C_c^\infty(\R^n)$ and $\lambda>0$. 

To put this observation in context, we remark that \v{S}ilhav\'{y} in~\cite{Silhavy2019} identified the Riesz fractional gradient as the unique fractional derivative operator that is suitably continuous, rotation and translation invariant and $s$-homogeneous. Hence, one can view $D_\d^s$ as a nonlocal derivative operator with finite interaction range that enjoys the same desirable properties.
\end{remark}

As recently shown in~\cite{BeCuMC22}, the nonlocal gradient can be written as the convolution of a certain integrable kernel with the classical gradient. To formulate this result, which is in analogy to the representation  of the Riesz fractional gradient as the Riesz potential of the usual gradient, we first introduce for $s\in [0,1)$ the kernel
	\begin{align}\label{Qdeltas}
Q_\d^s : \Rn \setminus \{0\} \to \R, \quad Q^s_\d(x)=c_{n,s}\int_{\abs{x}}^\d \frac{\overline{w}_{\d}(t)}{t^{n+s}}\,dt.
	\end{align}
\begin{proposition} \label{Prop: convolution with the classical gradient}
Let $s \in [0,1)$. It holds for every $\varphi \in C^{\infty}(\Rn)$ that
	\begin{align*}
		D_\delta^s \varphi 
		= Q_\delta^s \ast \nabla \varphi\ \in C^\infty(\Rn). 
	\end{align*}
	In particular, when $\phi \in \Scal(\R^n)$ then $D^s_\d \phi \in \Scal(\R^n;\R^n)$.
\end{proposition}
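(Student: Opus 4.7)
The plan is to reduce the claimed identity to an integration-by-parts argument on the convolution $Q_\d^s * \nabla\varphi$, exploiting the principal-value representation of $D_\d^s\varphi$ recorded in Remark~\ref{rem:nonlocalgradprop}\,a). To begin, I would record a few facts about $Q_\d^s$: it is non-negative, radial, supported in $\overline{B(0,\delta)}$ by construction, and near the origin satisfies $Q_\d^s(z) \sim c\,|z|^{1-n-s}$, which is integrable in dimension $n$ since $s<1$; hence $Q_\d^s \in L^1(\R^n)$. A direct differentiation of the radial representation~\eqref{Qdeltas} yields $\nabla Q_\d^s(z) = -c_{n,s}\,z\,w_\delta(z)/|z|^{n+s+1} = d_\d^s(z)$ for $z \neq 0$, with $d_\d^s$ as in~\eqref{defddeltas}.

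Next I would write
\[
(Q_\d^s * \nabla\varphi)(x) = \int_{B(0,\delta)} Q_\d^s(z)\,\nabla\varphi(x+z)\,dz = \lim_{r\to 0}\int_{\{r<|z|<\delta\}} Q_\d^s(z)\,\nabla\varphi(x+z)\,dz,
\]
the second equality holding by dominated convergence since $Q_\d^s\in L^1$ and $\nabla\varphi$ is locally bounded. On the annular domain one integrates by parts to move the gradient onto $Q_\d^s$. The outer boundary term at $|z|=\delta$ vanishes because $Q_\d^s \equiv 0$ there, so after passing to the limit $r\to 0$ I would identify $(Q_\d^s * \nabla\varphi)(x) = -\lim_{r\to 0}\int_{\{r<|z|<\delta\}} d_\d^s(z)\,\varphi(x+z)\,dz$, provided the inner boundary term dies.

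The inner boundary term is $-\overline{Q}_\d^s(r)\int_{|z|=r}\varphi(x+z)\,\tfrac{z}{|z|}\,dS(z)$. Since $\int_{|z|=r} \tfrac{z}{|z|}\,dS(z) = 0$ by symmetry, the constant contribution $\varphi(x)$ to the spherical integral vanishes, so a Taylor expansion $\varphi(x+z)=\varphi(x)+O(|z|)$ leaves a spherical integral of order $r^n$, while $\overline{Q}_\d^s(r) = O(r^{1-n-s})$. The product is therefore $O(r^{1-s})\to 0$ as $r\to 0$. This cancellation is the step I expect to be the main obstacle, and it is where the constraint $s<1$ enters genuinely. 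The principal-value limit $\lim_{r\to 0}\int_{\{r<|z|<\delta\}} d_\d^s(z)\,\varphi(x+z)\,dz$ itself exists by rewriting the integrand as $d_\d^s(z)[\varphi(x+z)-\varphi(x)]$, whose absolute value is bounded by $C/|z|^{n+s-1}$ (hence integrable), and by using the odd symmetry of $d_\d^s$ to discard the $\varphi(x)$ piece.

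Changing variables $y = x+z$ and invoking the odd symmetry $d_\d^s(x-y) = -d_\d^s(y-x)$, the volume limit becomes $\lim_{r\to 0}\int_{\{|y-x|>r\}} \varphi(y)\,d_\d^s(x-y)\,dy$, which equals $D_\d^s\varphi(x)$ by Remark~\ref{rem:nonlocalgradprop}\,a). Smoothness of $Q_\d^s * \nabla\varphi$ then follows from differentiation under the convolution: $\partial^\alpha(Q_\d^s * \nabla\varphi) = Q_\d^s * \partial^\alpha\nabla\varphi$ is well-defined for every multi-index $\alpha$, because $Q_\d^s$ is compactly supported in $L^1$ and $\nabla\varphi$ smooth. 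For $\varphi\in\Scal(\R^n)$, the estimate $|(Q_\d^s*f)(x)|\le \|Q_\d^s\|_{L^1}\sup_{|h|<\delta}|f(x-h)|$ applied to $f = \partial^\alpha\nabla\varphi$ transfers Schwartz decay through the convolution, yielding $D_\d^s\varphi\in\Scal(\R^n;\R^n)$.
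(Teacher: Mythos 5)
Your proof is correct and, notably, it takes a different route from the paper's. The paper dispatches this proposition in one line: the identity for $\varphi \in C_c^{\infty}(\R^n)$ and $s \in (0,1)$ is delegated to the cited result \cite[Proposition~4.3]{BeCuMC22}, the case $s=0$ is declared analogous, and general $\varphi \in C^{\infty}(\R^n)$ is handled by observing that the claim is local (any smooth function coincides with a compactly supported one near any given point, and both $D_\d^s$ and $Q_\d^s*\nabla(\cdot)$ only see values within $B(x,\d)$). Your argument, by contrast, is a self-contained derivation: you observe $\nabla Q_\d^s = d_\d^s$ away from the origin, integrate by parts on the annulus $\{r<|z|<\d\}$, kill the outer boundary term because $Q_\d^s$ vanishes on $\partial B(0,\d)$, kill the inner boundary term via the cancellation $\int_{|z|=r} z/|z|\,dS = 0$ together with a first-order Taylor bound, and identify the resulting principal-value integral with Remark~\ref{rem:nonlocalgradprop}\,a). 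This is precisely the content of the cited proposition from \cite{BeCuMC22}, so your proof effectively unfolds the black box the paper leans on; it also treats all $s\in[0,1)$ and all $\varphi\in C^\infty(\R^n)$ in one pass, avoiding the paper's separate remarks about $s=0$ and about localizing from $C_c^\infty$. What the paper's route buys is brevity and a clean separation of concerns; what yours buys is transparency and uniformity in the parameter.

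One small imprecision worth noting: the asymptotic $\overline{Q}_\d^s(r) = O(r^{1-n-s})$ as $r\to 0$ is correct for $n\geq 2$, and for $n=1$ with $s>0$, but in the corner case $n=1$, $s=0$ the kernel behaves like $c\log(1/r)$ rather than $O(1)$. This does not damage your conclusion, since the boundary term is then $O(r\log(1/r))\to 0$, but the stated power $r^{1-s}$ is not the sharp rate there. Also, when passing from $(Q_\d^s*\nabla\varphi)(x)=\int Q_\d^s(z)\nabla\varphi(x-z)\,dz$ to your expression with $\nabla\varphi(x+z)$ you are tacitly using the evenness of $Q_\d^s$; that is fine (the paper records radiality in Remark~\ref{rem:Qdeltas}) but would be worth a word.
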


\begin{proof} 
The statement for $\varphi \in C_c^{\infty}(\R^n)$ and $s \in (0,1)$ is exactly \cite[Proposition~4.3]{BeCuMC22}, and the case $s=0$ is proven analogously. Since any $\varphi \in C^{\infty}(\R^n)$ locally coincides with a smooth function with compact support, the same holds for such functions. Finally, since $Q^s_\d \in L^1(\R^n)$, the statement for Schwartz functions follows.
\end{proof}

\begin{remark}[Properties of $Q_\delta^s$]\label{rem:Qdeltas} 
For easier referencing, we list here a few relevant properties of $Q_\d^s$ for $s\in [0, 1)$ that will be used later in the paper. The details for $s\in(0,1)$ can be found in~\cite[Lemma~4.2, Propositions~5.2 and~5.5]{BeCuMC22}, and the same arguments extend also to the case $s=0$.\smallskip

a) The kernel $Q_\delta^s$ lies in $L^1(\R^n)$ with $\supp (Q_\delta^s) \subset B(0, \delta)$ and is radially decreasing.\smallskip

b) Since $Q_\delta^s$ has compact support, its Fourier transform is analytic and thus smooth. Moreover, $\widehat{Q}_\delta^s$ is bounded, radial, and strictly positive. 
\end{remark}

The nonlocal gradient and divergence as defined in~\eqref{eq: definition of nonlocal gradient} and~\eqref{eq: equiv integrals NL divergence} act as dual operators in the sense of integration by parts. While several versions of nonlocal integration by parts for related fractional or nonlocal operators have been studied in the literature \cite{MeS, Comi1, Silhavy2019}, we employ here the following formula, stated for smooth functions.

\begin{lemma}[Nonlocal integration by parts formula]\label{th:Nl parts}
	Let $s \in [0,1)$ and suppose that $\varphi \in C^{\infty}_c (\Rn)$ and $\psi \in C_c^{\infty} (\Rn; \Rn)$.
Then, 
	\begin{align*}
		\int_{\R^n} D_\delta^s \varphi \cdot \psi \, dx =& - \int_{\R^n} \varphi \diver_\delta^s \psi \, dx. 
	\end{align*}
\end{lemma}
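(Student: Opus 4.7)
The plan is to reduce the claim to the classical integration-by-parts formula by means of the convolution representation of the nonlocal gradient.

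First, I would establish the divergence analog of Proposition~\ref{Prop: convolution with the classical gradient}. Writing $\diver_\delta^s \psi(x) = \sum_{i=1}^n (D_\delta^s \psi_i)_i(x)$ directly from the definitions~\eqref{eq: definition of nonlocal gradient} and~\eqref{eq: equiv integrals NL divergence}, applying Proposition~\ref{Prop: convolution with the classical gradient} componentwise, and using linearity of the convolution yields
\begin{align*}
\diver_\delta^s \psi \;=\; \sum_{i=1}^n Q_\delta^s \ast \partial_i \psi_i \;=\; Q_\delta^s \ast \diver \psi.
\end{align*}

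Next, I would combine Proposition~\ref{Prop: convolution with the classical gradient} with Fubini's theorem to shift the convolution onto $\psi$. Fubini is justified because $Q_\delta^s \in L^1(\R^n)$ has compact support in $\overline{B(0,\delta)}$ (Remark~\ref{rem:Qdeltas}\,a) while $\nabla \varphi$ and $\psi$ are bounded with compact support; together with the radiality of $Q_\delta^s$, this gives
\begin{align*}
\int_{\R^n} D_\delta^s \varphi \cdot \psi \, dx \;=\; \int_{\R^n} (Q_\delta^s \ast \nabla \varphi) \cdot \psi \, dx \;=\; \int_{\R^n} \nabla \varphi \cdot (Q_\delta^s \ast \psi) \, dx.
\end{align*}
Since $\psi \in C_c^\infty(\R^n;\R^n)$ and $Q_\delta^s$ is a compactly supported $L^1$-kernel, $Q_\delta^s \ast \psi$ lies in $C_c^\infty(\R^n;\R^n)$, so classical integration by parts applies and, together with the first step, yields
\begin{align*}
\int_{\R^n} \nabla \varphi \cdot (Q_\delta^s \ast \psi) \, dx \;=\; -\int_{\R^n} \varphi \,(Q_\delta^s \ast \diver \psi) \, dx \;=\; -\int_{\R^n} \varphi \,\diver_\delta^s \psi \, dx,
\end{align*}
completing the argument.

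I do not expect a real obstacle along this path; the only items that require care are the divergence analog of Proposition~\ref{Prop: convolution with the classical gradient} and the application of Fubini, both of which follow readily from the properties of $Q_\delta^s$ recorded in Remark~\ref{rem:Qdeltas}. A direct alternative would be to write both sides as double integrals and symmetrize by swapping $x \leftrightarrow y$: using the evenness of $w_\delta$, the sum $\int D_\delta^s \varphi \cdot \psi \, dx + \int \varphi \,\diver_\delta^s \psi \, dx$ reduces to $c_{n,s}\int\!\int (\varphi\psi(x) - \varphi\psi(y))\cdot(x-y)\, w_\delta(x-y)/|x-y|^{n+s+1} \, dy\, dx$, which vanishes because for each fixed $x$ the inner integral in $y$ integrates an odd function against a radial weight, and the remaining term gives zero by a further swap.
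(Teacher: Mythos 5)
Your main argument is correct and essentially the same as the paper's: both prove $D_\delta^s\varphi = Q_\delta^s*\nabla\varphi$ and $\diver_\delta^s\psi = Q_\delta^s*\diver\psi$ from Proposition~\ref{Prop: convolution with the classical gradient}, then combine classical integration by parts with Fubini (the evenness of $Q_\delta^s$, which you correctly flag, is used implicitly by the paper as well); the only difference is that you apply Fubini first to move the convolution onto $\psi$ and then integrate by parts, whereas the paper integrates $\nabla(Q_\delta^s*\varphi)\cdot\psi$ by parts first and then applies Fubini. Your sketched alternative via symmetrization in $(x,y)$ is a genuinely different and more elementary route that avoids the convolution representation entirely; it is valid, with the minor caveat that after reducing to $c_{n,s}\iint\big(\varphi\psi(x)-\varphi\psi(y)\big)\cdot K(x-y)\,dy\,dx$ with $K$ odd, the split into two separate double integrals destroys absolute convergence near the diagonal for $s\in(0,1)$, so the final cancellation should be phrased either via the principal-value form of~\eqref{nonlocalgrad_alter} or by cutting out $|x-y|<r$ before splitting and letting $r\downarrow 0$.
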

\begin{proof}
According to Proposition~\ref{Prop: convolution with the classical gradient}, it holds that $D^s_\d \varphi = Q^s_\d* \nabla \varphi = \nabla (Q^s_\d * \varphi) \in C_c^{\infty}(\R^n;\R^n)$ and similarly, $\Div^s_\d \psi= Q^s_\d * \Div \psi \in C_c^{\infty}(\Rn)$. Hence, we may calculate
\begin{align*}
\int_{\Rn} D_\delta^s \varphi \cdot \psi \, dx & = \int_{\R^n} \nabla (Q^s_\d * \varphi) \cdot \psi \,dx\\
&= -\int_{\R^n} (Q^s_\d *\varphi) \Div \psi\,dx = -\int_{\R^n} \varphi \,(Q^s_\d*\Div \psi)\,dx=-\int_{\Rn} \varphi \Div^s_\d \psi\,dx;
\end{align*}
the second identity is due to classical integration by parts, while the third one follows via Fubini's theorem.
\end{proof}

In light of this integration by parts formula, the definition~in~\eqref{eq: definition of nonlocal gradient} can be extended to a broader class of functions using a distributional approach. 
We will work with functions defined on an open set $\Omega\subset\R^n$. As nonlocal boundary of this set, we choose a volumetric type as is common in nonlocal models, considering a tubular neighborhood or collar of radius $\d>0$ around $\O$. Precisely, $\O_{\d} = \Omega + B(0,\delta)$ is the nonlocal closure of $\Omega$ and  $\O_{\d} \setminus \O$  plays the role of nonlocal boundary.

\begin{definition}[Weak nonlocal gradient]\label{def: nonlocal gradient 2}
Let $s \in [0,1)$, $\d >0$, $\O \subset \R^n$ open and $u \in L^1_{\rm loc}(\O_\d)$. We say that $v \in L^1_{\rm loc}(\O;\R^n)$ is the weak nonlocal gradient of $u$, written as $v = D^s_\d u$, if
\[
\int_{\O}v \cdot \psi \,dx = - \int_{\O_\d} u \Div^s_\d \psi\,dx \qquad\text{for all $\psi \in C_c^{\infty}(\O;\R^n)$}.
\]
\end{definition}

\begin{remark}\label{rem:singular}
 In the case $s = 0$, it holds for each $\phi \in C^{\infty}(\R^n)$ by \eqref{nonlocalgrad_alter} that
\begin{align*}
D^0_\d \phi (x) =\lim_{r \downarrow 0}\int_{B(x,r)^c} \varphi(y) d_\delta^0(x - y) \, dy \quad \text{for $x\in \R^n$,}
\end{align*}
with $d^0_\d$ as in~\eqref{defddeltas}.
The theory of singular integrals (see e.g.,~\cite[Theorem~5.4.1]{Gra14a}) implies that $D^0_\d$ can be uniquely extended to a continuous linear operator from $L^p(\R^n)$ to $L^p(\R^n;\R^n)$ when $p \in (1,\infty)$; indeed, one can easily verify that $d^0_\d$ satisfies the size and cancellation conditions~\cite[Eq.~(5.4.1) and (5.4.3)]{Gra14a}, while the H\"ormander condition~\cite[Eq.~(5.4.2)]{Gra14a} follows from the stronger property
\[
\abs{\nabla d^0_\d} \leq \frac{C}{\abs{\,\cdot\, }^{n+1}},
\]
which holds due to $\nabla w_{\delta}=0$ in $B(0,b_0\d)$. 

We therefore find for each $u \in L^p(\Omega_{\d})$ (after extension to $\R^n$ by zero) that $D^0_\d u \in L^p(\Omega;\R^n)$ and
\[
\norm{D^0_\d u}_{L^p(\Omega;\R^n)} \leq C \norm{u}_{L^p(\Omega_\d)}
\]
with $C>0$ a constant independent of $u$. Note that via a density argument, $D^0_\d u$ coincides with the weak nonlocal gradient from Definition~\ref{def: nonlocal gradient 2}.
\end{remark}

In analogy with the definition of the standard and fractional Sobolev spaces, it is now quite natural 
 to consider the space of $L^p$-functions whose weak nonlocal gradient is also an $L^p$-function.

\begin{definition}[Nonlocal Sobolev spaces]\label{de:Hspd}
Let $s\in [0,1)$, $p \in [1,\infty]$ and $\O \subset \R^n$ be open. We define the nonlocal Sobolev space $\Hspd(\O)$ as
\[
\Hspd(\O):=\{u \in L^p(\O_\d) \,:\, D^s_\d u \in L^p(\O;\R^n)\},
\]
equipped with the norm
	\begin{displaymath}\label{normHspdelta}
		\lVert u\rVert_{H^{s,p,\d}(\O)} = \Bigl( \left\| u \right\|_{L^p (\O_{\d})}^p + \left\| D_\delta^s u \right\|_{L^p (\O;\R^n)}^p \Bigr)^{\frac{1}{p}} .
	\end{displaymath}
The corresponding spaces of vector-valued functions $H^{s,p, \delta}(\Omega;\R^m)$ are defined componentwise.
\end{definition}
 In parallel with the classical Sobolev spaces, $H^{s,p,\d}(\O)$ is a Banach space and, when $p \in (1,\infty)$, also reflexive. Moreover, a sequence $\{u_j\}_{j\in \N} \subset H^{s,p,\d}(\O)$ converges weakly to $u$ in $H^{s,p,\d}(\O)$ for $p \in (1,\infty)$ if and only if $u_j \weakto u$ in $L^p(\O_\d)$ and $D^s_\d u_j \weakto D^s_\d u$ in $L^p(\O;\R^n)$  as $j\to \infty$.  In view of Remark~\ref{rem:singular}, it holds that $H^{0,p,\d}(\O)=L^p(\O_\d)$ for $p \in (1,\infty)$ with an equivalent norm. Additionally, we set 
\begin{equation}\label{eq:classicalspace1}
H^{1,p,\d}(\R^n):=W^{1,p}(\R^n) \quad \text{with} \ \  D^1_\d u:= \nabla u \ \text{for $u \in H^{1,p,\d}(\R^n)$,}
\end{equation}
which provides a consistent notation for the range of fractional orders $s \in [0,1]$. \smallskip

When we consider the whole space, i.e., $\Omega=\R^n$, and $s\in (0,1)$, then by Lemma~\ref{le:nonlocalfrac} the nonlocal Sobolev spaces of Definition~\ref{normHspdelta}  correspond to the fractional Sobolev spaces $H^{s, p}(\R^n)$ consisting of $L^p$-functions with weak fractional gradient in $L^p$, which are known to be equivalent to the Bessel potential spaces for $p\in (1,\infty)$ \cite{Shieh1, KrS22, Comi1, Comi3}; in formulas,
$$H^{s, p, \delta}(\R^n) = H^{s, p}(\R^n).$$

We point out that for $s \in (0,1)$ and $p \in [1,\infty)$, the Definition~\ref{de:Hspd} is different from how nonlocal Sobolev spaces are introduced in \cite[Definition~3.3]{BeCuMC22}, where  the authors use the closure of $C_c^{\infty}(\R^n)$ functions under the norm in~\eqref{normHspdelta}. However, both definitions are equivalent for Lipschitz domains as the following density result shows. It corresponds to a nonlocal version of the Meyers-Serrin theorem for classical Sobolev spaces, and the proof, which is based on approximate extension, can be found in Appendix~\ref{appendix: density}.
\begin{theorem}\label{th:density}
Let $s\in [0,1)$, $p \in [1,\infty)$ and let $\O \subset \R^n$ be a bounded Lipschitz domain or $\O=\R^n$. Then, for every $u \in \Hspd(\O)$, there exists a sequence $\{\varphi_j\}_{j \in \N} \subset C_c^{\infty}(\R^n)$ that converges (when restricted to $\O_\d$) to $u$ in $\Hspd(\O)$.
\end{theorem}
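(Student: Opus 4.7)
The plan is to adapt the classical Meyers--Serrin strategy of mollification and truncation to the nonlocal setting, treating the two cases separately. For the whole-space case $\Omega=\R^n$, I would take $u\in H^{s,p,\delta}(\R^n)$, a standard mollifier $\eta_\epsilon$, and set $u_\epsilon=\eta_\epsilon\ast u \in C^\infty(\R^n)\cap L^p(\R^n)$. A Fubini argument applied to the integral representation in~\eqref{eq: definition of nonlocal gradient} yields the commutation $D^s_\delta u_\epsilon = \eta_\epsilon\ast D^s_\delta u$, so $u_\epsilon\to u$ in $H^{s,p,\delta}(\R^n)$ by standard mollifier properties. Since $\nabla u_\epsilon = (\nabla\eta_\epsilon)\ast u \in L^p(\R^n)$, the convolution representation $D^s_\delta u_\epsilon = Q^s_\delta \ast \nabla u_\epsilon$ from Proposition~\ref{Prop: convolution with the classical gradient} is available. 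I would then multiply by smooth cutoffs $\chi_R\in C_c^\infty(\R^n)$ satisfying $\chi_R\equiv 1$ on $B(0,R)$, $\supp \chi_R \subset B(0,2R)$, and $\abs{\nabla \chi_R}\leq C/R$; the Leibniz rule gives $D^s_\delta(\chi_R u_\epsilon) = Q^s_\delta \ast (\chi_R\nabla u_\epsilon + u_\epsilon\nabla \chi_R)$, whose first summand converges to $D^s_\delta u_\epsilon$ in $L^p(\R^n;\R^n)$ by dominated convergence while the second vanishes since $\norm{u_\epsilon\nabla \chi_R}_{L^p(\R^n)} \leq (C/R)\norm{u_\epsilon}_{L^p(\R^n)}\to 0$. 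A diagonal argument then delivers the desired sequence in $C_c^\infty(\R^n)$ converging to $u$ in $H^{s,p,\delta}(\R^n)$.

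For the bounded Lipschitz case, the central obstruction is the outer boundary $\partial \Omega_\delta$. Although extending $u$ by zero to $\tilde u\in L^p(\R^n)$ preserves the nonlocal gradient on $\Omega$ (since $B(x,\delta)\subset \Omega_\delta$ for every $x\in \Omega$ makes $D^s_\delta u$ on $\Omega$ independent of values outside $\Omega_\delta$), the induced jump of $\tilde u$ at $\partial \Omega_\delta$ can destroy the $L^p$-integrability of $D^s_\delta\tilde u$ in the collar $\Omega_\delta\setminus \Omega$, so a direct application of the whole-space argument fails. My plan is therefore to build an \emph{approximate extension}: for each $\epsilon>0$, produce $\tilde u_\epsilon\in H^{s,p,\delta}(\R^n)$ with compact support strictly inside $\Omega_\delta$ and $\norm{\tilde u_\epsilon-u}_{L^p(\Omega_\delta)} + \norm{D^s_\delta \tilde u_\epsilon - D^s_\delta u}_{L^p(\Omega;\R^n)}<\epsilon$. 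To do so, I would cover $\overline{\Omega_\delta}$ by an interior open set $U_0 \Subset \Omega_\delta$ and finitely many boundary charts $U_1,\ldots,U_N$ on which $\partial \Omega_\delta$ is a Lipschitz graph with a uniform interior-pointing direction $e_k$. Using a subordinate partition of unity $\{\zeta_k\}_{k=0}^N$, I would translate each boundary piece $\zeta_k u$ slightly in the direction $e_k$, so that its zero extension has support compactly inside $\Omega_\delta$; summing with the untranslated interior piece $\zeta_0 u$ then yields the desired $\tilde u_\epsilon$.

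Once $\tilde u_\epsilon \in H^{s,p,\delta}(\R^n)$ is in hand, I would apply the whole-space construction to approximate it by $C_c^\infty(\R^n)$ functions in $H^{s,p,\delta}(\R^n)$, restrict to $\Omega_\delta$, and close with a diagonal argument combining the two levels of approximation. The main obstacle is the convergence analysis for the approximate extension: one has to verify that the translated and summed construction approximates $u$ in the nonlocal gradient norm on $\Omega$, which relies on the translation invariance of $D^s_\delta$ noted in Remark~\ref{rem:nonlocalgradprop}(b) together with $L^p$-continuity of translations applied separately to $u$ and to $D^s_\delta u$, and requires the Lipschitz structure of $\partial \Omega_\delta$ to guarantee the uniform choice of interior-pointing directions on each boundary chart.
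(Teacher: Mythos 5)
Your whole-space argument is essentially the paper's Case~1: mollify, cut off, and use the Leibniz rule for the error estimate. That part is sound (the commutation $D^s_\delta(\eta_\epsilon*u)=\eta_\epsilon*D^s_\delta u$ is best justified via the distributional definition and translation invariance rather than Fubini applied to~\eqref{eq: definition of nonlocal gradient}, which is a pointwise formula for smooth functions, but this is a minor point).

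The bounded Lipschitz case, however, contains a genuine gap, and the root of it is that you place the obstruction at the wrong boundary. You build boundary charts along $\partial\Omega_\delta$ and translate boundary pieces inward to produce $\tilde u_\epsilon$ with support compactly contained in $\Omega_\delta$, claiming $\tilde u_\epsilon\in H^{s,p,\delta}(\R^n)$. Two things go wrong. First, the hypothesis gives Lipschitz regularity of $\partial\Omega$, not of $\partial\Omega_\delta$; the dilation $\Omega_\delta=\Omega+B(0,\delta)$ of a Lipschitz set need not be Lipschitz (for non-convex $\Omega$, pieces of $\partial\Omega$ that come within distance $2\delta$ of each other can produce non-Lipschitz features), so your chart construction is not available under the stated assumptions. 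Second, and more fundamentally, the claim $\tilde u_\epsilon\in H^{s,p,\delta}(\R^n)$ is false in general: a function $u\in H^{s,p,\delta}(\Omega)$ has $D^s_\delta u\in L^p$ only on $\Omega$, and on the collar $\Omega_\delta\setminus\Omega$ the function $u$ is merely $L^p$ with no extra regularity. For $s>0$ the operator $D^s_\delta$ is not bounded on $L^p$ (the kernel $d^s_\delta$ is more singular than Calder\'on--Zygmund), so a piece $\zeta_k u$ supported near $\partial\Omega_\delta$ will generically not have its nonlocal gradient in $L^p$ near the inner edge of $\supp\zeta_k$ — and translating it inward only moves that bad set, it does not remove it. Concretely, writing $D^s_\delta(\zeta_k u)=\zeta_k D^s_\delta u+K_{\zeta_k}(u)$, the first term lives on $\supp\zeta_k\subset\Omega_\delta\setminus\Omega$, where $D^s_\delta u$ is not controlled; so $D^s_\delta(\zeta_k u)\notin L^p(\R^n)$. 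Your construction therefore produces something that cannot feed into the whole-space argument.

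The paper resolves this by working at $\partial\Omega$, which is where the $L^p$ control of $D^s_\delta u$ actually ends. Using a partition of unity subordinate to charts on $\partial\Omega$ and a translation argument (Lemma~\ref{le:approximateextension}\,$(i)$, built on Lemma~\ref{le:cutoff}), one produces $u_j$ that agrees with $u$ up to $1/(2j)$ in $H^{s,p,\delta}(\Omega)$-norm and, crucially, lies in $H^{s,p,\delta}(\Omega'_j)$ for some $\Omega'_j\Supset\Omega$ — that is, the region of $L^p$ control of the nonlocal gradient is pushed slightly outward past $\partial\Omega$, not the support of the function pushed inward from $\partial\Omega_\delta$. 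One then mollifies $u_j$ (extended by zero) with radius smaller than $d(\partial\Omega,\partial\Omega'_j)$, so that $D^s_\delta(\eta_\epsilon*u_j)=\eta_\epsilon*D^s_\delta u_j$ on $\Omega$ only samples $D^s_\delta u_j$ where it is $L^p$, and closes by a diagonal argument. The irregularity of $u$ in the collar $\Omega_\delta\setminus\Omega$, and the geometry of $\partial\Omega_\delta$, never enter.
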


An important ingredient for the analysis of the nonlocal gradient are suitable versions of the fundamental theorem of calculus (FTC). For the case $s \in (0,1)$, this has been proven in~\cite[Proposition~4.4]{BeCuMC22}. Now we generalize it to $s=0$ as well, which is needed to obtain a nonlocal Poincar\'{e} inequality independent of the fractional parameter. The proof takes inspiration from the arguments in \cite[Appendix]{BeCuMC22}. 

\begin{proposition}[Nonlocal FTC for $s=0$]\label{prop:ftoczero} 
There is a function $W_\delta \in L^{\infty}(\R^n;\R^n)$ such that every $\phi \in \Scal(\R^n)$ can be expressed as
\[
\phi=-RD^0_\d \phi +W_\delta\ast D_\delta^0\phi,
\]
where $\widehat{R\psi}(\xi)=\frac{i\xi \cdot \widehat{\psi}(\xi)}{\abs{\xi}}$ denotes the Riesz transform of $\psi \in \Scal(\R^n;\R^n)$ and $W_\delta\ast D_\delta^0\phi$ is the sum of the componentwise convolutions of $W_\d$ and $D^0_\d \phi$. 
\end{proposition}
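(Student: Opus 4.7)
My plan is to construct $W_\delta$ in Fourier space, using the classical fundamental theorem of calculus as the starting point. By Proposition~\ref{Prop: convolution with the classical gradient} applied at $s = 0$, one has $D^0_\delta \phi = Q^0_\delta \ast \nabla \phi$ for every $\phi \in \Scal(\R^n)$. The classical identity $\phi = I_1 \ast (-\Delta)^{1/2}\phi$ follows from $\widehat{I_1}(\xi) = (2\pi|\xi|)^{-1}$, and a direct Fourier computation gives $\widehat{R\nabla\phi}(\xi) = -2\pi|\xi|\widehat{\phi}(\xi)$, so that $(-\Delta)^{1/2}\phi = -R\nabla\phi$. Combining these produces
\[
\phi = -I_1 \ast R\nabla\phi.
\]

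Next I introduce the error kernel $R^0_\delta := I_1 - Q^0_\delta$. The leading singularities of $Q^0_\delta(x)$ and $I_1(x)$ as $x\to 0$ match exactly because of the choice of the normalization constant $c_{n,0}$ in~\eqref{Qdeltas}, and the condition $\overline{w}_\delta \equiv 1$ on $[0, b_0\delta]$ from~\ref{itm:h3} actually makes $R^0_\delta$ constant on $B(0, b_0\delta)$, smooth on the annulus $\{b_0\delta \leq |x| \leq \delta\}$, and equal to $I_1$ outside $B(0, \delta)$; in particular $R^0_\delta \in C^\infty(\R^n)$. Exploiting commutativity of the Riesz transform with convolution, the decomposition $I_1 = Q^0_\delta + R^0_\delta$ yields
\[
\phi = -R(Q^0_\delta \ast \nabla\phi) - R(R^0_\delta \ast \nabla\phi) = -R D^0_\delta\phi - R(R^0_\delta \ast \nabla\phi),
\]
reducing the problem to representing $-R(R^0_\delta \ast \nabla\phi)$ as $W_\delta \ast D^0_\delta \phi$ for some $W_\delta \in L^\infty(\R^n;\R^n)$.

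Matching Fourier transforms componentwise---equivalently, requiring $W_{\delta, j} \ast Q^0_\delta = -R_j R^0_\delta$ for each component---forces
\[
\widehat{W_\delta}(\xi) = \frac{i\xi\, \widehat{R^0_\delta}(\xi)}{|\xi|\,\widehat{Q^0_\delta}(\xi)} = \frac{i\xi}{|\xi|}\left(\frac{1}{2\pi|\xi|\,\widehat{Q^0_\delta}(\xi)} - 1\right),
\]
which I take as the definition. To conclude $W_\delta \in L^\infty(\R^n;\R^n)$ my plan is to show $\widehat{W_\delta} \in L^1(\R^n;\R^n)$, so that $W_\delta \in C_0 \subset L^\infty$ by Fourier inversion. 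Near $\xi = 0$, the function $\widehat{Q^0_\delta}$ is smooth and strictly positive by Remark~\ref{rem:Qdeltas}, so $\widehat{W_\delta}$ has a $1/|\xi|$-type singularity inherited from $\widehat{I_1}$, which is locally integrable in dimensions $n \geq 2$. At $|\xi|\to\infty$, the asymptotic $\widehat{Q^0_\delta}(\xi) \sim (2\pi|\xi|)^{-1}$---stemming from the matched leading singularities of $Q^0_\delta$ and $I_1$ at the origin---combined with the rapid decay of $\widehat{R^0_\delta}$ at infinity, obtained by repeated integration by parts using that $R^0_\delta \in C^\infty(\R^n)$ has enough derivatives in $L^1$, produces rapid decay of $\widehat{W_\delta}$.

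The main technical obstacle will be these sharp Fourier-asymptotic estimates, especially showing $2\pi|\xi|\,\widehat{Q^0_\delta}(\xi) \to 1$ at infinity with a quantitative rate and controlling $1/\widehat{Q^0_\delta}$ uniformly away from zero, which together underpin the $L^1$-bound on $\widehat{W_\delta}$. The one-dimensional case requires separate treatment, since $R^0_\delta$ grows logarithmically at infinity and the $1/|\xi|$-singularity is not locally integrable in $\R$; the Fourier strategy still applies there after an additional decomposition isolating the singular low-frequency contribution.
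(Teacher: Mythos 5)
Your plan follows essentially the same route as the paper: motivate and define $W_\delta$ in Fourier space, decompose the multiplier using the comparison between $Q^0_\delta$ and the Riesz kernel $I_1$, and prove boundedness by showing $\widehat{W_\delta}\in L^1$ for $n\geq 2$ together with a separate argument for $n=1$. (The paper's $R^s_\delta$ is defined as $Q^s_\delta - I_{1-s}$, the opposite sign of your $R^0_\delta$; that is only a convention.) Two points need fixing. First, there is a dropped sign: matching $W_{\delta,j}\ast Q^0_\delta = -R_j R^0_\delta$ in Fourier gives $\widehat{W_{\delta,j}} = -\,i\xi_j\widehat{R^0_\delta}/(|\xi|\widehat{Q^0_\delta})$, not $+$; equivalently $\widehat{W_\delta}(\xi) = \tfrac{i\xi}{|\xi|}\bigl(1 - \tfrac{1}{2\pi|\xi|\widehat{Q}^0_\delta(\xi)}\bigr)$, which is exactly what the paper's distribution $Z_\delta$ encodes, whereas your formula is its negative; with that sign the final Fourier verification would return $-\phi$. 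Second, the justification ``repeated integration by parts using that $R^0_\delta\in C^\infty(\R^n)$ has enough derivatives in $L^1$'' glosses over the fact that neither $R^0_\delta$ nor $\nabla R^0_\delta$ is in $L^1(\R^n)$: outside the ball $R^0_\delta$ is the Riesz kernel, and $\nabla R^0_\delta(x)\propto (1-w_\delta(x))\,x/|x|^{n+1}$ decays only like $|x|^{-n}$. The integration-by-parts gain starts at order two; this is precisely what Lemma~\ref{le:decayR} captures (its proof works under $|\beta|\geq|\omega|+2$). Your $L^1$-conclusion for $\widehat{W_\delta}$ still goes through, because dividing by $\widehat{Q}^0_\delta$ at infinity costs only one factor of $|\xi|$ against rapid decay, but the statement should be made at that level of precision.

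Your remark on $n=1$ correctly identifies the genuine obstruction (the $1/|\xi|$ singularity of $\widehat{W_\delta}$ is not locally integrable on the line). The paper resolves it by splitting off the low-frequency principal-value piece, the distribution called $Y_\delta$, and invoking a dedicated boundedness result from~\cite{BeCuMC22} for its inverse Fourier transform; supplying that ingredient is the one nontrivial step your sketch leaves open.
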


\begin{proof} Consider the tempered distribution $Z_\d \in \Scal'(\R^n;\C^n)$, given by
\[
\inner{Z_\d,\eta} = \lim_{r \downarrow 0} \int_{B(0,r)^c} \left(\frac{i\xi}{\abs{\xi}}-\frac{i\xi}{2\pi\abs{\xi}^2\widehat{Q}^0_\d(\xi)}\right) \eta(\xi)\,d\xi \quad \text{for $\eta \in \Scal(\R^n)$}.
\]
We may decompose $Z_\d$ into the sum of another tempered distribution $Y_\d \in \Scal'(\R^n;\C^n)$ given by
\[
\inner{Y_\d,\eta} = \lim_{r \downarrow 0} \int_{B(0,r)^c} \mathbbm{1}_{B(0,1)}(\xi)\frac{-i\xi}{2\pi\abs{\xi}^2\widehat{Q}^0_\d(0)}\eta(\xi)\,d\xi \quad \text{for $\eta \in \Scal(\R^n)$},
\]
and the locally integrable function $X_\d \in L^1_{\rm loc}(\R^n;\C^n)$, 
\[
\xi \mapsto \left(\frac{i\xi}{2\pi|\xi|^2}\left(\frac{1}{\widehat{Q}^0_\d(0)}-\frac{1}{\widehat{Q}^0_\d(\xi)}\right) + \frac{i\xi}{\abs{\xi}}\right)\mathbbm{1}_{B(0,1)}(\xi) +\frac{i\xi}{|\xi|}  \left(1-\frac{1}{2\pi\abs{\xi}\widehat{Q}^0_\d(\xi)}\right)\mathbbm{1}_{B(0,1)^c}(\xi).
\]
The inverse Fourier transform of $Y_\d$ corresponds to a bounded function; for the case $n\geq 2$, this is because $Y_\d$ agrees with an integrable function, whereas for the case $n=1$ this follows from~\cite[Lemma~A.1\,$b)$]{BeCuMC22}. Moreover, we can show that $X_\d$ is actually integrable. For the first term this follows from the fact that $\widehat{Q}^0_\d$ is smooth and strictly positive, cf.~Remark~\ref{rem:Qdeltas}. For the second term, we use \eqref{Rhatsd} to write for $\abs{\xi}\geq 1$
\[
1-\frac{1}{2\pi\abs{\xi}\widehat{Q}^0_\d(\xi)} = 1-\frac{1}{1+2\pi\abs{\xi}\widehat{R}^0_\d(\xi)}=\frac{2\pi\abs{\xi}\widehat{R}^0_\d(\xi)}{1+2\pi\abs{\xi}\widehat{R}^0_\d(\xi)},
\]
which is integrable by Lemma~\ref{le:decayR}. We conclude that $X_\d$ also has a bounded inverse Fourier transform.

All in all, we conclude that there is a $W_\d \in L^{\infty}(\R^n;\R^n)$ such that 
\[
\widehat{W_\d} = Z_\d.
\]
Note that $W_\d$ takes values in $\R^n$ as $\inner{Z_\d,\eta(-\,\cdot)}=\overline{\inner{Z_\d,\eta}}$ for $\eta \in \Scal(\R^n)$. Finally, using Proposition \ref{Prop: convolution with the classical gradient} for the Fourier transform of $D^0_\d \f$, we have for $\phi, \eta\in \Scal(\R^n)$ and $\psi = -RD^0_\d \phi + W_\delta\ast D^0_\d \phi \in \Scal'(\R^n)$ that
\begin{align*}
\inner{\widehat{\psi},\eta} &= \int_{\R^n} \frac{-i\xi}{\abs{\xi}}\cdot \widehat{D^0_\d \phi(\xi)}\eta(\xi)\,d\xi+\inner{Z_\d\widehat{D^0_\d \phi},\eta}\\
&=\lim_{r \downarrow 0} \int_{B(0,r)^c} \frac{-i\xi}{2\pi\abs{\xi}^2\widehat{Q}^0_\d(\xi)}\cdot \widehat{D^0_\d \phi}(\xi)\eta(\xi)\,d\xi\\
&= \lim_{r \downarrow 0} \int_{B(0,r)^c} \frac{-i\xi}{2\pi\abs{\xi}^2\widehat{Q}^0_\d(\xi)}\cdot \widehat{Q}^0_\d(\xi)2\pi i \xi\widehat{\phi}(\xi)\eta(\xi)\,d\xi=\int_{\R^n} \widehat{\phi}(\xi)\eta(\xi)\,d\xi,
\end{align*}
which proves $\psi = \phi \in \Scal(\R^n)$ after taking the inverse Fourier transform.
\end{proof}

\subsection{Complementary-value spaces}\label{subsec:complementary}
Our study of variational problems involving the nonlocal gradient is carried out on affine subspaces of $\Hspd(\O)$ satisfying a complementary-value condition. For $\O \subset \R^n$ open and bounded, let
\[
\O_{-\d} = \{ x \in \O \, : \, d(x,\partial\O)> \d\}
\]
 assuming that $\delta>0$ is small enough so that $\Omega_{-\d}$ is non-empty; for an illustration of $\Omega$ and its inner and outer collar, see Figure~\ref{fig}. We define for $s\in[0,1)$ and $p \in [1,\infty)$,
\begin{align}\label{Hspd0}
\Hspd_0(\O) = \overline{C_c^{\infty}(\O_{-\d})}^{\Hspd(\O)}
\end{align}
and for $g \in \Hspd(\O)$ the complementary-value space
\[
\Hspd_g(\O) = g+ \Hspd_0(\O).
\]
In a similar vain, we set 
\begin{equation}\label{eq:classicalspace2}
H^{1,p,\d}_0(\Omega):=\overline{C_c^{\infty}(\O_{-\d})}^{W^{1,p}(\O_\d)},
\end{equation}
which will be used to study the asymptotics $s \to 1$.

In order to avoid confusion, we clarify that the notation used in this document for $H^{s,p,\d}_g(\O)$ slightly differs from the one used in \cite{BeCuMC22,BeCuMC22b}, where the same spaces were denoted by $H^{s,p,\d}_g(\O_{-\d})$.

\begin{figure}
\centering      

\resizebox{7cm}{!}{
\begin{tikzpicture}[x=0.75pt,y=0.75pt,yscale=-1,xscale=1]

\begin{scope}
\draw[line width=0.75, double, draw=colora,  double distance = 10mm]  (230,90) .. controls (278,47) and (369,53) .. (410,90) .. controls (451,127) and (473,279) .. (390,230) .. controls (307,181) and (256,280) .. (220,240) .. controls (184,200) and (182,133) .. (230,90) -- cycle ;
\end{scope}

\begin{scope}
\clip (230,90) .. controls (278,47) and (369,53) .. (410,90) .. controls (451,127) and (473,279) .. (390,230) .. controls (307,181) and (256,280) .. (220,240) .. controls (184,200) and (182,133) .. (230,90) -- cycle ;

\draw[line width=0.75, double, draw=colorb,  double distance = 10mm]  (230,90) .. controls (278,47) and (369,53) .. (410,90) .. controls (451,127) and (473,279) .. (390,230) .. controls (307,181) and (256,280) .. (220,240) .. controls (184,200) and (182,133) .. (230,90) -- cycle ;
\end{scope}

\draw [black!100, thick] (230,90) .. controls (278,47) and (369,53) .. (410,90) .. controls (451,127) and (473,279) .. (390,230) .. controls (307,181) and (256,280) .. (220,240) .. controls (184,200) and (182,133) .. (230,90) -- cycle ;

\draw (213,152) node [anchor=north west] {\color{colorb} $\Omega_{-\delta}$};
\draw (193,152) node [anchor=north west] { $\Omega$};
\draw (171,152) node [anchor=north west] {\color{colora} $\Omega_{\delta}$};
\draw [colorb] (392,100) -- (403,85);
\draw [colora] (403,85) -- (415, 69);
\draw (395,89) node [anchor=north west] {\color{colorb} $\delta$};
\draw (406,72) node [anchor=north west] {\color{colora} $\delta$};
\end{tikzpicture}}
\caption{Illustration of the set $\Omega$ with its nonlocal closure $\Omega_\delta$,  its nonlocal boundary $\Omega_\delta\setminus \Omega$, and the collar $\Omega_\delta\setminus \Omega_{-\delta}$ of thickness $2\delta$, where complementary values are prescribed.}\label{fig}
\end{figure}
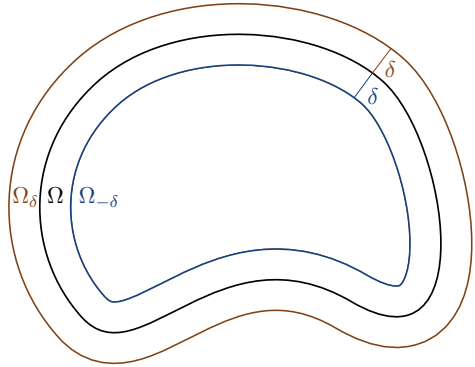

 When $\O_{-\d}$ is a Lipschitz domain, these affine subspaces comprise exactly those functions in $\Hspd(\O)$ that have prescribed values in $\O_\d \setminus \O_{-\d}$. Indeed, for the case $s=1$, it is well-known that 
\[
H^{1,p,\d}_0(\Omega)=\{u \in W^{1,p}(\O_\d)\,:\, u=0 \ \text{a.e.~in $\O_\d \setminus \O_{-\d}$}\},
\]
whereas the case $s \in [0,1)$ is treated in the next statement, which we prove in Appendix~\ref{appendix: density}.
 
\begin{proposition}\label{prop:densitycomplement}
Let $s \in [0,1)$, $p \in [1,\infty)$ and $\O \subset \R^n$ be open and bounded such that $\O_{-\d}$ is a Lipschitz domain. Then, 
\[
H^{s,p,\d}_g(\O) = \{ u \in H^{s,p,\d}(\O)\,:\, u=g \ \text{a.e.~in $\O_\d \setminus \O_{-\d}$}\}.
\]
\end{proposition}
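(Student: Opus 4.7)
The inclusion ``$\subseteq$'' is the straightforward direction and follows directly from the definition. If $u = g + w$ with $w \in H^{s,p,\d}_0(\O)$, pick a sequence $\phi_j \in C_c^\infty(\O_{-\d})$ with $\phi_j \to w$ in $H^{s,p,\d}(\O)$; in particular $\phi_j \to w$ in $L^p(\O_\d)$, and since each $\phi_j$ vanishes identically on $\O_\d \setminus \O_{-\d}$, extracting a pointwise a.e.\ subsequence gives $w = 0$ a.e.\ on $\O_\d \setminus \O_{-\d}$, hence $u = g$ a.e.\ there.

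The reverse inclusion is the substantial part. Given $u \in H^{s,p,\d}(\O)$ with $u = g$ a.e.\ on $\O_\d \setminus \O_{-\d}$, set $v = u - g \in H^{s,p,\d}(\O)$ so that $v$ vanishes a.e.\ on the collar, and extend $v$ by zero to $\bar v : \R^n \to \R^m$, which is essentially supported in $\overline{\O_{-\d}}$. The plan is a partition-of-unity translation plus mollification, exploiting the Lipschitz regularity of $\O_{-\d}$: cover $\overline{\O_{-\d}}$ by finitely many open sets $U_0 \Subset \O_{-\d}, U_1, \ldots, U_N$ such that each $U_i$ with $i \geq 1$ is a neighborhood of a boundary piece admitting a direction $e_i \in S^{n-1}$ with $(U_i \cap \overline{\O_{-\d}}) + t e_i \Subset \O_{-\d}$ for all sufficiently small $t > 0$. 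Taking a subordinate partition of unity $\{\eta_i\}$ and setting $e_0 = 0$, define, for small $\epsilon > 0$,
\[
\phi_\epsilon := \sum_{i=0}^N \rho_{\eta(\epsilon)} \ast \bigl[(\eta_i \bar v)(\cdot - \epsilon e_i)\bigr],
\]
where $\rho_{\eta(\epsilon)}$ is a standard mollifier with $\eta(\epsilon) \to 0$ chosen so that each translated piece has distance at least $\eta(\epsilon)$ to $\partial \O_{-\d}$. Then $\phi_\epsilon \in C_c^\infty(\O_{-\d})$ by construction.

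It remains to show $\phi_\epsilon \to v$ in $H^{s,p,\d}(\O)$. The $L^p(\O_\d)$ convergence is standard, relying on continuity of translation and mollification on $L^p(\R^n)$ together with $\sum_i \eta_i = 1$ on $\supp(\bar v)$. For the nonlocal gradient, the key mechanism is that $D^s_\d$, being (through Proposition~\ref{Prop: convolution with the classical gradient}) essentially convolution with a kernel of support in $B(0,\d)$, commutes both with translation and with smooth mollification: on smooth functions $D^s_\d(\phi \ast \rho) = (D^s_\d \phi) \ast \rho$ and $D^s_\d(\phi(\cdot - a)) = (D^s_\d \phi)(\cdot - a)$, after which $L^p$-continuity of these two operations transfers directly to convergence of $D^s_\d \phi_\epsilon$ in $L^p(\O;\R^n)$.

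The main obstacle I anticipate is the step of lifting this commutation from smooth functions to $\bar v$. Concretely, one must establish that $\bar v \in H^{s,p,\d}(\R^n)$ and that its global weak nonlocal gradient coincides on $\O$ with the weak nonlocal gradient $D^s_\d v$ prescribed by Definition~\ref{def: nonlocal gradient 2}. This will follow from testing against $\psi \in C_c^\infty(\R^n;\R^n)$ and carefully splitting the integral $\int_{\R^n} \bar v \Div^s_\d \psi \, dx$, using that $\bar v$ vanishes outside $\overline{\O_{-\d}}$ while $\Div^s_\d \psi$ involves a nonlocal kernel of range $\d$, so that the contribution away from $\O$ is controlled by extending test functions or by approximating $\psi$ by ones supported in $\O$; Theorem~\ref{th:density} applied to $v$ provides the smooth approximations needed to justify the identification by passage to the limit. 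Once this identification is in hand, the convergence of the nonlocal gradients reduces to the $L^p$-continuity of translation and mollification applied to $D^s_\d \bar v \in L^p(\R^n;\R^n)$, completing the proof.
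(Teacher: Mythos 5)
Your overall strategy matches the paper's: reduce to $g = 0$ by subtracting, cover $\overline{\Omega_{-\delta}}$ (not $\Omega$) by a partition of unity adapted to the Lipschitz boundary of $\Omega_{-\delta}$, translate the pieces inward, and mollify. This is exactly what the paper does via Lemma~\ref{le:approximateextension}\,$(ii)$ followed by mollification, with the commutation of $D^s_\delta$ with translation and convolution (via Proposition~\ref{Prop: convolution with the classical gradient}) providing the convergence of the nonlocal gradients.

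However, there is a genuine gap in how you propose to justify the intermediate fact that $\bar v \in H^{s,p,\delta}(\R^n)$ with the right gradient. You invoke Theorem~\ref{th:density} applied to $v \in H^{s,p,\delta}(\Omega)$, but that theorem requires $\Omega$ itself to be a bounded Lipschitz domain (or $\R^n$), whereas Proposition~\ref{prop:densitycomplement} only assumes $\Omega_{-\delta}$ is Lipschitz -- the outer domain $\Omega$ may be irregular. So this step cannot be justified as written. Your alternative suggestion (approximating $\psi \in C_c^\infty(\R^n;\R^n)$ by $\chi_j\psi$ with $\chi_j \in C_c^\infty(\Omega)$) is closer to working, but it is not automatic: passing to the limit in $\int_{\Omega_{-\delta}} v\,\Div^s_\delta(\chi_j\psi)\,dx$ produces a correction term $\int_{\Omega_{-\delta}} v\, K_{\chi_j}(\psi^\intercal)\,dx$ via the Leibniz rule~\eqref{Leibniz_div}, and one must argue that this vanishes as $j\to\infty$ (it does, by choosing $\chi_j \equiv 1$ on $\overline{\Omega_{-\delta}}$, using $\Omega_{-\delta}+B(0,\delta)\subseteq\Omega$, the uniform bound $\abs{K_{\chi_j}(\psi^\intercal)}\le C\norm{\psi}_{L^\infty}$, and dominated convergence), but you don't carry this out.

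The cleanest fix, and the one the paper uses, is Lemma~\ref{le:cutoff}: since $\overline{\Omega_{-\delta}}$ is a compact subset of $\Omega$, choose $\chi\in C_c^\infty(\Omega)$ with $\chi\equiv 1$ on $\overline{\Omega_{-\delta}}$; then $\chi v = v$, and for any open bounded $\Omega'\supset\Omega$ the hypothesis $(\Omega'\setminus\Omega)\cap\supp\chi=\varnothing$ is satisfied, giving $v=\chi v\in H^{s,p,\delta}(\Omega')$ and hence $\bar v\in H^{s,p,\delta}(\R^n)$. Alternatively, and more in keeping with the paper's actual proof, one can avoid extending to $\R^n$ entirely and perform the translations within $\Omega$ directly, with Lemma~\ref{le:cutoff} guaranteeing that each localized, translated piece $\tau_{\lambda\zeta_i}(\chi_i v)$ stays in $H^{s,p,\delta}(\Omega)$.
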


It may be surprising at first glance that we prescribe values in a collar of width $2\d$ around the boundary of $\Omega$, yet, this choice leads to a natural treatment of the nonlocal variational problems in this paper, as can be seen for instance from the Poincar\'{e} inequality in~\cite[Theorem~6.2]{BeCuMC22} and the Euler-Lagrange equations in~\cite[Theorem~8.2]{BeCuMC22}.

For a discussion of relevant properties and useful results on these function spaces, like Poincar\'{e} inequalities and compact embeddings, we refer to \cite{BeCuMC22, BeCuMC22b}. Apart from those, there is the following Leibniz rule from~\cite[Lemma~3.2 and 3.3]{BeCuMC22b}, which we will use among other things to enforce complementary-values via cut-off procedures. 
\begin{lemma}[Nonlocal Leibniz rule]\label{le:leibniz}
Let $s \in [0,1)$, $\d >0$, $p \in [1,\infty]$, and $\Omega\subset \R^n$ open. If $u \in \Hspd(\O)$ and $\chi \in C^{\infty}_c(\R^n)$, then $\chi u \in \Hspd(\O)$ with 
\[
D^s_\d(\chi u) = \chi D^s_\d u + K_{\chi}(u),
\]
where $K_{\chi}:L^p(\O_\d) \to L^p(\O;\R^n)$ is the bounded linear operator given by
\begin{align*}\label{Kchi}
K_{\chi}(u)(x) = c_{n,s}\int_{B(x,\d)}u(y)\frac{\chi(x)-\chi(y)}{\abs{x-y}^{n+s}}\frac{x-y}{\abs{x-y}}w_\d(x-y)\,dy \quad\text{for $x \in \O$},
\end{align*}
and there is a $C>0$ such that
\[
\norm{K_{\chi}(u)}_{L^p(\O;\R^n)} \leq C \Lip(\chi) \norm{u}_{L^p(\O_\d)}.
\]
\end{lemma}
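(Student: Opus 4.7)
My plan is to first establish the pointwise identity for smooth functions, then use Fubini to extend it distributionally to all $u\in H^{s,p,\delta}(\Omega)$, and finally to derive the norm estimate on $K_{\chi}$ from Young's convolution inequality.

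For $u,\chi\in C^{\infty}_c(\R^n)$, I would just plug $\chi u$ into the defining integral \eqref{eq: definition of nonlocal gradient} and split the difference quotient by adding and subtracting $\chi(x)u(y)$:
\[
\chi(x)u(x)-\chi(y)u(y)=\chi(x)\bigl(u(x)-u(y)\bigr)+u(y)\bigl(\chi(x)-\chi(y)\bigr).
\]
The first piece yields $\chi(x)D^s_\delta u(x)$ and the second piece is exactly $K_{\chi}(u)(x)$; this gives the pointwise formula for smooth $u$.

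Before extending, I would prove the $L^p$-bound on $K_\chi$, since it justifies all the forthcoming Fubini applications. Using $|\chi(x)-\chi(y)|\le \mathrm{Lip}(\chi)|x-y|$, one sees that for any $u\in L^p(\Omega_\delta)$ (extended by zero to $\R^n$)
\[
|K_{\chi}(u)(x)|\le c_{n,s}\,\mathrm{Lip}(\chi)\int_{\R^n}|u(y)|\,\frac{w_\delta(x-y)}{|x-y|^{n+s-1}}\,dy,
\]
which is $\mathrm{Lip}(\chi)$ times the convolution of $|u|$ with the radial kernel $g_{s,\delta}(z)=c_{n,s}w_\delta(z)/|z|^{n+s-1}$. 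Because $s\in[0,1)$ and $w_\delta$ is bounded with compact support in $B(0,\delta)$, one has $g_{s,\delta}\in L^1(\R^n)$, and Young's inequality delivers
\[
\|K_{\chi}(u)\|_{L^p(\Omega;\R^n)}\le c_{n,s}\|g_{s,\delta}\|_{L^1}\,\mathrm{Lip}(\chi)\,\|u\|_{L^p(\Omega_\delta)},
\]
so $K_{\chi}:L^p(\Omega_\delta)\to L^p(\Omega;\R^n)$ is bounded and linear. (The case $p=\infty$ follows by the same kernel estimate.)

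For the main assertion with $u\in H^{s,p,\delta}(\Omega)$, I would verify directly that $\chi D^s_\delta u+K_\chi(u)$ satisfies Definition~\ref{def: nonlocal gradient 2}, i.e., that
\[
\int_{\Omega}\bigl(\chi D^s_\delta u+K_{\chi}(u)\bigr)\cdot\psi\,dx=-\int_{\Omega_\delta}\chi u\,\Div^s_\delta \psi\,dx\qquad \text{for all }\psi\in C_c^\infty(\Omega;\R^n).
\]
Since $\chi\psi\in C_c^\infty(\Omega;\R^n)$, the first term rewrites by the weak-gradient definition as $-\int_{\Omega_\delta}u\,\Div^s_\delta(\chi\psi)\,dx$. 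For the second term, Fubini (justified by the $L^p$-bound above together with $\chi\in C_c^\infty(\R^n)$ and $\psi\in C_c^\infty(\Omega;\R^n)$) yields
\[
\int_{\Omega}K_{\chi}(u)\cdot\psi\,dx=c_{n,s}\int_{\Omega_\delta}u(y)\int_{\R^n}\psi(x)\cdot\frac{(\chi(x)-\chi(y))(x-y)}{|x-y|^{n+s+1}}w_\delta(x-y)\,dx\,dy.
\]
Applying the same splitting to the smooth function $\chi\psi\in C_c^\infty(\R^n;\R^n)$ in the integral representation of $\Div^s_\delta$ (Section~\ref{subsec:nonlocalcalculus}), one obtains pointwise for each $y\in\R^n$
\[
\Div^s_\delta(\chi\psi)(y)-\chi(y)\Div^s_\delta\psi(y)=c_{n,s}\int_{\R^n}\frac{(\chi(x)-\chi(y))(x-y)}{|x-y|^{n+s+1}}\cdot\psi(x)\,w_\delta(x-y)\,dx,
\]
using the symmetries $(y-x)=-(x-y)$, $w_\delta(y-x)=w_\delta(x-y)$. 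Plugging this in,
\[
\int_{\Omega}K_\chi(u)\cdot\psi\,dx=\int_{\Omega_\delta}u\bigl[\Div^s_\delta(\chi\psi)-\chi\Div^s_\delta\psi\bigr]\,dy,
\]
so the two contributions combine to $-\int_{\Omega_\delta}\chi u\,\Div^s_\delta\psi\,dy$, as desired.

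I expect the main obstacle to be the bookkeeping in the distributional step, namely, verifying that the Fubini rearrangement of $\int K_\chi(u)\cdot\psi\,dx$ aligns cleanly with the Leibniz-type identity for $\Div^s_\delta(\chi\psi)$; the signs and the exchange of roles of $x$ and $y$ have to be tracked carefully, and the integrability conditions must be checked using the $L^p$-bound on $K_\chi$ and the compact support of $\psi$ and $w_\delta$. Once this pointwise identity for the nonlocal divergence of a product is in hand, the remainder is algebraic and the conclusion $D^s_\delta(\chi u)=\chi D^s_\delta u+K_\chi(u)\in L^p(\Omega;\R^n)$ follows, placing $\chi u\in H^{s,p,\delta}(\Omega)$.
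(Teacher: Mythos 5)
Your proof is correct and follows essentially the same strategy as the paper: establish the smooth-case identity and the $L^p$-bound on $K_\chi$ (which the paper delegates to \cite[Lemmas~3.2, 3.3]{BeCuMC22b}, but which rest on exactly the algebraic splitting and Young's convolution estimate you carry out), and then extend to $u\in H^{s,p,\delta}(\Omega)$ by testing the candidate $\chi D^s_\delta u + K_\chi(u)$ against $\psi\in C_c^\infty(\Omega;\R^n)$, using Fubini and the Leibniz identity for $\Div^s_\delta(\chi\psi)$. This distributional verification is precisely the chain of equalities the paper records in its Lemma~\ref{le:cutoff}, so the two proofs coincide in substance.
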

\begin{proof}
The statement for $u \in C_c^{\infty}(\R^n)$ with $s \in (0,1)$ and the bound for $K_{\chi}$ follow immediately from \cite[Lemma~3.2 and 3.3]{BeCuMC22b} (the arguments remain valid for unbounded sets). We can extend it to $u \in H^{s,p,\d}(\O)$ via a distributional argument as in Lemma~\ref{le:cutoff}. The case $s=0$ can be proven analogously.
\end{proof}
In a similar spirit, one obtains with a slight abuse of notation that
\begin{align}\label{Leibniz_div}
\Div^s_\d(\chi u) = \chi\Div^s_\d u + K_{\chi}(u^\intercal)
\end{align}
for $u \in\Hspd(\O;\R^n)$ and $\chi\in C_c^{\infty}(\R^n)$; here $\zeta^\intercal$ indicates the transpose of a vector $\zeta\in \R^n$.

As a consequence of the Leibniz rule above, we can prove that in complementary-value spaces, weak convergence of nonlocal gradients improves to strong convergence in the strip where the values are prescribed. The following result shows natural parallels with~\cite[Lemma~2.12]{KrS22} in the context of Riesz fractional gradients. 
\begin{lemma}[Strong convergence in the collar]\label{le:strongoutside}
Let $s \in (0,1)$, $p \in (1,\infty)$, $\Omega \subset \R^n$ open and bounded, $O\subset \Omega$ open with $\Omega_{-\d} \Subset O$, and $g \in H^{s,p,\d}(\O)$. If $\{u_j\}_{j \in \N} \subset H^{s,p,\d}_g(\O)$ converges weakly to $u$ in $H^{s,p,\d}(\O)$, then 
\[
D^s_\d u_j \to D^s_\d u \quad \text{in $L^p(\O \setminus O;\Rn)$.}
\]
\end{lemma}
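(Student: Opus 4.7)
\textit{Proof plan.} The plan is to exploit that $v_j := u_j - u$ lies in $H^{s,p,\d}_0(\O)$, so it vanishes on the collar $\O_\d\setminus \O_{-\d}$, and to observe that on the set $\O\setminus O$—where there is a positive distance to $\O_{-\d}$—the nonlocal gradient acts on such functions as an integral operator with a continuous bounded kernel, hence compact. Compactness then converts the weak convergence to the desired strong convergence.

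More precisely, I would first set $v_j := u_j - u$, note that $v_j \in H^{s,p,\d}_0(\O)$ as the difference of two elements of the affine space $H^{s,p,\d}_g(\O)$, and $v_j \weakto 0$ in $H^{s,p,\d}(\O)$; in particular, $v_j \weakto 0$ in $L^p(\O_\d)$. By Proposition~\ref{prop:densitycomplement}, $v_j = 0$ a.e.~on $\O_\d\setminus \O_{-\d}$. Since $\O_{-\d} \Subset O$, the quantity $\eta := d(\overline{\O_{-\d}}, \O\setminus O)$ is strictly positive. For any $\varphi \in C_c^\infty(\O_{-\d})$ and any $x \in \O\setminus O$, the point $x$ lies outside $\supp \varphi$, so the pointwise representation in Remark~\ref{rem:nonlocalgradprop}\,a) reduces to
\[
D^s_\d \varphi(x) = -c_{n,s}\int_{\O_{-\d}} \varphi(y)\,\frac{w_\d(x-y)(x-y)}{|x-y|^{n+s+1}}\,dy =: (T\varphi)(x).
\]
Since $|x-y| \geq \eta$ throughout $(\O\setminus O)\times \O_{-\d}$, the kernel $K(x,y) := -c_{n,s}\, w_\d(x-y)(x-y)/|x-y|^{n+s+1}$ is continuous and uniformly bounded on this bounded set, and a standard Arzel\`{a}--Ascoli argument shows that $T: L^p(\O_{-\d})\to L^p(\O\setminus O;\R^n)$ is a compact linear operator.

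The last step is to pass from smooth functions to elements of $H^{s,p,\d}_0(\O)$. By density of $C_c^\infty(\O_{-\d})$ in $H^{s,p,\d}_0(\O)$ (see~\eqref{Hspd0}) together with the continuity of both $D^s_\d$ and $T$, the identity $D^s_\d \varphi = T \varphi$ on $\O\setminus O$ extends to $D^s_\d v_j = T v_j$ on $\O\setminus O$. Combining the compactness of $T$ with $v_j \weakto 0$ in $L^p(\O_{-\d})$ (inherited from the convergence in $L^p(\O_\d)$) then yields $D^s_\d v_j = T v_j \to 0$ strongly in $L^p(\O\setminus O;\R^n)$, which is the claim. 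The main obstacle—really the key insight—is the reduction to a compact integral operator via the geometric separation $\eta > 0$; once this reduction is in place, the remainder is routine, relying only on the standard fact that compact operators send weakly convergent sequences to strongly convergent ones.
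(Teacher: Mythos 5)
Your proof is correct, and it takes a genuinely different route than the paper's. The paper reduces to $u=0$, $g=0$, chooses a cut-off $\chi\in C_c^\infty(O)$ with $\chi \equiv 1$ on $\Omega_{-\d}$ so that $u_j = \chi u_j$, and applies the Leibniz rule (Lemma~\ref{le:leibniz}) to find that on $\Omega\setminus O$ (where $\chi \equiv 0$) one has $D^s_\d u_j = K_\chi(u_j)$; strong convergence then follows from the boundedness of $K_\chi$ combined with the compact embedding of \cite[Theorem~7.3]{BeCuMC22}, which upgrades $u_j\weakto 0$ in $L^p(\O_\d)$ to strong $L^p$-convergence. You instead go directly to the pointwise kernel representation of Remark~\ref{rem:nonlocalgradprop}\,a): since $v_j:=u_j-u \in H^{s,p,\d}_0(\O)$ is supported in $\overline{\O_{-\d}}$ and $\eta:=d\bigl(\overline{\O_{-\d}},\O\setminus O\bigr)>0$, the kernel $d^s_\d(x-y)$ is smooth and uniformly bounded on the relevant product set, so the restriction of $D^s_\d$ to $\O\setminus O$ acts on such functions as a \emph{compact} integral operator from $L^p(\O_{-\d})$ to $L^p(\O\setminus O;\R^n)$, which converts weak $L^p$-convergence into strong convergence directly. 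The trade-off is instructive: your version is more self-contained, replacing the compact-embedding theorem and the nonlocal Leibniz rule by Arzel\`a--Ascoli and the elementary fact that compact operators map weakly convergent sequences to strongly convergent ones, and it pinpoints the mechanism --- the compactness gain comes exactly from the smoothing of the kernel at the scale of the separation $\eta$; the paper's version is terser and plugs more neatly into the toolkit ($K_\chi$, compact embeddings) reused throughout the rest of the manuscript.
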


\begin{proof}
Due to linearity, it suffices to prove the statement for the special case $u=0$ and $g=0$. Let us consider  therefore a sequence $\{u_j\}_{j\in \N}\subset H^{s,p,\d}_0(\O)$ with $u_j \weakto 0$ in $H^{s,p,\d}(\Omega)$. With $\chi \in C_c^{\infty}(O)$ a cut-off function with $\chi \equiv 1$ on $\Omega_{-\d}$, we obtain that $u_j=\chi u_j$ for $j\in \N$ and $u_j \to 0$ in $L^p(\O_{\d})$ as a consequence of \cite[Theorem 7.3]{BeCuMC22}. Hence, by Lemma~\ref{le:leibniz}, 
\begin{align*}
\norm{D^s_\d u_j}_{ L^p(\Omega\setminus O;\R^n)}&=\norm{D^s_\d (\chi u_j)}_{L^p(\Omega\setminus O;\R^n)}\\
&\leq \norm{D^s_\d (\chi u_j) - \chi D^s_\d u_j}_{L^p(\Omega;\Rn)}= \|K_{\chi}(u_j)\|_{L^p(\Omega;\R^n)} \to 0\quad \text{as $j\to \infty$,}
\end{align*}
exploiting the continuity of $K_{\chi}:L^p(\O_\d) \to L^p(\O;\Rn)$.
\end{proof}

\subsection[Connection between nonlocal and classical Sobolev spaces]{Connection between nonlocal and classical Sobolev spaces} \label{Section: Connection between nonlocal and classical Sobolev spaces}
One of the key tools for our analysis is the following proposition, which allows us to switch between nonlocal and classical gradients and is the technical basis for an effective translation mechanism.  It is the counterpart of \cite[Proposition~3.1]{KrS22}, where fractional gradients and their relation with classical ones are analyzed. 

We first introduce the operator
\begin{align}\label{Psdelta}
\Pcal^s_\d:\Scal(\R^n) \to \Scal(\R^n), \quad \phi \mapsto \left(\frac{\widehat{\phi}}{\widehat{Q}^s_\d}\right)^{\vee},
\end{align}
which is well-defined since $1/\widehat{Q}^s_\d$ is a smooth function with polynomially bounded derivatives (cf.~Remark~\ref{rem:Qdeltas} and \cite[Eq.~(29)]{BeCuMC22}). Moreover, as a consequence of  the Fourier representation, 
\begin{equation}\label{eq:psdinverse}
\Pcal^s_\d (Q^s_\d*\phi) = Q^s_\d *(\Pcal^s_\d \phi) = \phi \quad \text{for all $\phi \in \Scal(\R^n)$},
\end{equation}
which implies, in particular, that  $D^s_\d (\Pcal^s_\d \phi)  = \nabla (Q^s_\d *(\Pcal^s_\d \phi))=\nabla \phi$. We now extend these properties to the Sobolev spaces.

\begin{theorem}[Translating between nonlocal and classical gradients]\label{prop:connection}
Let $s\in (0,1)$, $p \in [1,\infty]$ and $\O \subset \R^n$ be open. The following two statements hold: 
\begin{itemize}
\item[$(i)$] The operator $\Qcal^s_\d:H^{s,p,\d}(\O) \to W^{1,p}(\O)$, $u \mapsto Q^s_\d *u$ is bounded and if $u \in H^{s,p,\d}(\O)$, then
 $v=\Qcal^s_\d  u$ satisfies $\nabla v= D^s_\d u$ on $\O$. \\[-0.3cm]
\item[$(ii)$] The operator $\Pcal^s_\d$ in~\eqref{Psdelta}  can be extended to a bounded linear operator from $W^{1,p}(\R^n)$ to $H^{s,p,\d}(\R^n)$ such that $\Pcal^s_\d = (\Qcal^s_\d)^{-1}$, i.e.,
\[
\Pcal^s_\d \Qcal^s_\d u = u \ \ \text{for $u \in H^{s,p,\d}(\R^n)$} \quad \text{and} \quad \Qcal^s_\d \Pcal^s_\d v =v \ \ \text{for $v \in W^{1,p}(\R^n)$};
\]
in particular, if $v \in W^{1,p}(\R^n)$, then $u=\Pcal^s_\d v$ satisfies $D^s_\d u = \nabla v$ on $\R^n$.
\end{itemize}
\end{theorem}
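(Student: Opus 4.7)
The plan is to establish $(i)$ by a direct Fubini computation on the defining weak formulation and $(ii)$ via a convolution representation of $\Pcal^s_\d$ combined with density arguments. The pivotal external input beyond the content of the excerpt is the nonlocal fundamental theorem of calculus from \cite[Theorem~4.5]{BeCuMC22}, which for $s\in(0,1)$ provides a vector-valued kernel $V^s_\d$ such that $\phi = V^s_\d \ast D^s_\d \phi$ for every $\phi \in \Scal(\R^n)$, where $\ast$ denotes componentwise convolution followed by the Euclidean inner product.

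\textbf{Part $(i)$.} Extending $u \in H^{s,p,\d}(\O)$ by zero outside $\O_\d$ and using that $Q^s_\d \in L^1(\R^n)$ is compactly supported in $B(0,\d)$ (Remark~\ref{rem:Qdeltas}\,a)), Young's convolution inequality immediately gives $v := \Qcal^s_\d u = Q^s_\d \ast u \in L^p(\O)$ with $\|v\|_{L^p(\O)} \leq \|Q^s_\d\|_{L^1(\R^n)}\|u\|_{L^p(\O_\d)}$. To identify $\nabla v$ on $\O$ with $D^s_\d u$, I test against an arbitrary $\psi \in C_c^\infty(\O;\R^n)$ and use Fubini's theorem (valid since $Q^s_\d$ is compactly supported and all factors are integrable), the radial symmetry of $Q^s_\d$, and the identity $\Div^s_\d \psi = Q^s_\d \ast \Div \psi$ (the divergence version of Proposition~\ref{Prop: convolution with the classical gradient}) to compute
\begin{align*}
\int_\O v \Div \psi \,dx = \int_{\O_\d} u(y)\,(Q^s_\d \ast \Div \psi)(y)\,dy = \int_{\O_\d} u \Div^s_\d \psi\,dy = -\int_\O D^s_\d u \cdot \psi\,dx,
\end{align*}
where the last step is the defining relation of the weak nonlocal gradient (Definition~\ref{def: nonlocal gradient 2}). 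This identifies $\nabla v = D^s_\d u \in L^p(\O;\R^n)$ and finishes the proof of $(i)$ together with the $L^p$-bound.

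\textbf{Part $(ii)$.} Starting from the Fourier definition of $\Pcal^s_\d$ on $\Scal(\R^n)$, the identity $D^s_\d \Pcal^s_\d \phi = \nabla \phi$ is an immediate consequence of~\eqref{eq:psdinverse} combined with part $(i)$ applied to the Schwartz function $\Pcal^s_\d \phi$. Invoking the FTC with $\phi$ replaced by $\Pcal^s_\d \phi$ then yields the key representation
\begin{align*}
\Pcal^s_\d \phi = V^s_\d \ast D^s_\d \Pcal^s_\d \phi = V^s_\d \ast \nabla \phi \qquad \text{for all } \phi \in \Scal(\R^n).
\end{align*}
From the mapping properties of convolution with $V^s_\d$ developed in \cite{BeCuMC22} (the same tool underlying the nonlocal Poincar\'e inequality), one concludes the estimate $\|\Pcal^s_\d \phi\|_{L^p(\R^n)} \leq C\|\nabla \phi\|_{L^p(\R^n;\R^n)}$, which together with $\|D^s_\d \Pcal^s_\d \phi\|_{L^p(\R^n;\R^n)} = \|\nabla \phi\|_{L^p(\R^n;\R^n)}$ gives $\|\Pcal^s_\d \phi\|_{H^{s,p,\d}(\R^n)} \leq C \|\phi\|_{W^{1,p}(\R^n)}$. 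By density of $\Scal(\R^n)$ in $W^{1,p}(\R^n)$ for $p \in [1,\infty)$, $\Pcal^s_\d$ extends uniquely to a bounded linear operator $W^{1,p}(\R^n) \to H^{s,p,\d}(\R^n)$; for $p=\infty$ the same formula $\Pcal^s_\d v := V^s_\d \ast \nabla v$ provides a consistent definition.

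\textbf{Inverse identities and gradient relation.} The identities $\Qcal^s_\d \Pcal^s_\d = \id$ and $\Pcal^s_\d \Qcal^s_\d = \id$ hold on $\Scal(\R^n)$ directly by~\eqref{eq:psdinverse}. Using the boundedness of $\Qcal^s_\d$ and $\Pcal^s_\d$ from parts $(i)$ and $(ii)$, together with the density of $C_c^\infty(\R^n)$ in $H^{s,p,\d}(\R^n)$ from Theorem~\ref{th:density} and in $W^{1,p}(\R^n)$, these identities extend by continuity to all of $H^{s,p,\d}(\R^n)$ and $W^{1,p}(\R^n)$, respectively. Finally, $D^s_\d(\Pcal^s_\d v) = \nabla v$ for $v \in W^{1,p}(\R^n)$ is inherited from the Schwartz-level identity by approximation. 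The main substantive hurdle is the $L^p$-boundedness of the convolution with $V^s_\d$ applied to $\nabla\phi$, which requires the fine Fourier-analytic information on $\widehat{Q^s_\d}$ developed in \cite{BeCuMC22}; once this input is available, the remaining steps are routine applications of Fubini and density.
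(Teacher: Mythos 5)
Your proof of part~$(i)$ is identical in substance to the paper's: Young's inequality for the $L^p$-bound, then a Fubini computation against test functions $\psi\in C_c^{\infty}(\Omega;\R^n)$ using $\Div^s_\d\psi=Q^s_\d\ast\Div\psi$ and the definition of the weak nonlocal gradient. That part is fine.

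For part~$(ii)$ you take a genuinely different route (direct kernel representation $\Pcal^s_\d\phi=V^s_\d\ast\nabla\phi$ plus density), whereas the paper proves that $\Qcal^s_\d$ is a bounded bijection and invokes the Banach isomorphism theorem, constructing an explicit preimage $w=(-\Delta)^{\frac{1-s}{2}}v+\phi_1\ast(-\Delta)^{\frac{1-s}{2}}v+\phi_2\ast v$ via Fourier-side correction terms. Unfortunately your route has a concrete error: the estimate
\[
\norm{\Pcal^s_\d\phi}_{L^p(\R^n)}\le C\norm{\nabla\phi}_{L^p(\R^n;\R^n)}
\]
is \emph{false}. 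Testing with $\phi_R:=\phi(\,\cdot/R\,)$ gives $\norm{\nabla\phi_R}_{L^p}=R^{n/p-1}\norm{\nabla\phi}_{L^p}$, while for large $R$ the Fourier transform $\widehat{\phi_R}$ concentrates near the origin, where $\widehat{Q}^s_\d$ is roughly constant, so $\Pcal^s_\d\phi_R\approx\phi_R/\widehat{Q}^s_\d(0)$ and $\norm{\Pcal^s_\d\phi_R}_{L^p}\sim R^{n/p}$. The ratio blows up like $R$, so no such constant exists. Equivalently, the Fourier multiplier $\widehat{V}^s_\d(\xi)=-i\xi/(2\pi|\xi|^2\widehat{Q}^s_\d(\xi))$ is unbounded near $\xi=0$, so it is not an $L^p$ multiplier. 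What \emph{is} true is $\norm{\Pcal^s_\d\phi}_{L^p}\le C\norm{\phi}_{W^{1,p}}$, which needs the $\norm{\phi}_{L^p}$ term to control the low-frequency contribution; but establishing this is not an immediate consequence of the cited material in \cite{BeCuMC22} (the Poincar\'e inequality there is for compactly supported functions on a bounded domain, with a domain-dependent constant). Proving the correct estimate amounts to precisely the Fourier decomposition the paper's Step~2 carries out with $\phi_1,\phi_2$, so one does not gain anything by invoking $V^s_\d$ instead. Finally, your treatment of $p=\infty$ is also problematic: Schwartz functions are not dense in $W^{1,\infty}(\R^n)$, so the identities $\Pcal^s_\d\Qcal^s_\d=\id$ and $\Qcal^s_\d\Pcal^s_\d=\id$ cannot be extended "by continuity," and the convolution $V^s_\d\ast\nabla v$ need not even be well-defined for general $v\in W^{1,\infty}$ since $V^s_\d$ is only locally integrable. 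The paper's Banach-isomorphism route avoids density entirely and thus covers $p=\infty$ uniformly.
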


	\begin{proof} 
\textit{Part (i):}		Let $u \in H^{s,p,\d}(\O)$, then $v = \Qcal^s_\d u \in L^p(\Omega)$ since $Q^s_\d \in L^1(\R^n)$. For every $\phi \in C_c^{\infty}(\O;\R^n)$, we find that
\begin{align*}
\int_{\O}v \Div \phi\,dx &= \int_{\O_{\d}} u  \,(Q^s_\d * \Div \phi) \,dx\\
&= \int_{\O_{\d}} u \Div^s_\d \phi\,dx =-\int_{\O}D^s_\d u \cdot \phi\,dx,
\end{align*}
where the first identity uses Fubini's theorem, the second one follows from Proposition~\ref{Prop: convolution with the classical gradient}, and the third one is simply the definition of the weak nonlocal gradient. This proves $v \in W^{1,p}(\O)$ with $\nabla v = D^s_\d u$ on $\O$. The boundedness of $\Qcal^s_\d$ follows from Young's convolution inequality. \smallskip

\textit{Part (ii):} Since $\Pcal^s_\d$ is the inverse of the mapping $\Qcal^s_\d$ on $\Scal(\R^n)$ (cf.~\eqref{eq:psdinverse}), it is sufficient to prove that $\Qcal^s_\d$ is boundedly invertible. Indeed, we can then find the suitable extension by setting $\Pcal^s_\d:=(\Qcal^s_\d)^{-1}$. Since $\Qcal^s_\d$ is bounded by part~$(i)$, we only need to prove bijectivity to deduce the statement via Banach's isomorphism theorem. \smallskip

\textit{Step 1: Injectivity.} Suppose that $\Qcal^s_\d u =Q^s_\d * u=0$ for $u \in H^{s,p,\d}(\R^n)$. 
Then, $D^s_\d u = \nabla (Q^s_\d * u)=0$, and in particular,
\[
\int_{\R^n} u\Div^s_\d\phi\,dx = 0 \quad \text{for all $\phi \in C_c^{\infty}(\R^n;\R^n)$};
\]
by density, this also holds for all $\phi \in \Scal(\R^n;\R^n)$. By taking any $\psi \in C_c^{\infty}(\R^n;\R^n)$ and setting $\phi = \Pcal^s_\d \psi \in \Scal(\R^n;\R^n)$, we obtain
\[
0=\int_{\R^n} u\Div^s_\d\phi\,dx=\int_{\R^n} u\Div^s_\d \Pcal^s_\d \psi\,dx=\int_{\R^n} u\Div \psi\,dx.
\]
 Hence, $u$ is constant. Together with  $Q^s_\d*u=0$, this shows that $u=0$ and proves the injectivity of $\Qcal_\delta^s$. \smallskip 

\textit{Step 2: Surjectivity.} Take $v \in W^{1,p}(\R^n)$ and $\chi \in C_c^{\infty}(\R^n)$ an even function with $\chi \equiv 1$ on $B(0,1)$. Define the functions $\phi_1,\phi_2 \in \Scal(\R^n)$  by 
\[
\phi_1 = -\chi^{\vee} \quad \text{and} \quad \phi_2 = \left(\frac{\chi}{\widehat{Q}^s_\d}+(1-\chi)\left(\frac{1}{\widehat{Q}^s_\d}-\abs{2\pi\,\cdot\,}^{1-s}\right)\right)^{\vee};
\]
here, $\phi_2 \in \Scal(\R^n)$ since \[
\frac{1}{\widehat{Q}^s_\d}-\abs{2\pi\,\cdot\,}^{1-s}=\frac{-\abs{2\pi\,\cdot\,}^{1-s}\widehat{R}^s_\d}{\widehat{Q}^s_\d} \quad \text{in $B(0,1)^c$}
\]
 in view of~\eqref{Rhatsd}, 
and $\widehat{R}^s_\d$ agrees with a Schwartz function on $B(0,1)^c$ by \eqref{eq:decayRhat}. Note also that $\phi_1$ and $\phi_2$ are real-valued and even, since the same holds for their Fourier transforms. Because $(-\Delta)^{\frac{1-s}{2}}$ may be extended to a bounded linear operator from $W^{1,p}(\R^n)$ to $H^{s,p}(\R^n)$ (cf.~\cite[Proposition~3.1\,$(ii)$]{KrS22}) and $H^{s,p}(\R^n)=H^{s,p,\d}(\R^n)$ by Lemma~\ref{H0sp=Hospd} and Remark~\ref{rem:hspd=hsp}, we can define 
\[
w:=(-\Delta)^{\frac{1-s}{2}}v+\phi_1*(-\Delta)^{\frac{1-s}{2}}v+\phi_2*v \in H^{s,p,\d}(\R^n). 
\]
Using Fubini's theorem and the duality for the fractional Laplacian (see e.g.,~\cite[Eq.~(3.6)]{KrS22}), we find for $\phi \in C_c^{\infty}(\R^n;\R^n)$
\begin{align*}
\int_{\R^n}w\Div^s_\d \phi\,dx &= \int_{\R^n} v\bigl((-\Delta)^{\frac{1-s}{2}}\Div^s_\d \phi + (-\Delta)^{\frac{1-s}{2}}(\phi_1*\Div^s_\d \phi)+\phi_2*\Div^s_\d \phi\bigr)\,dx\\
&=\int_{\R^n} v \Div \phi\,dx,
\end{align*}
where the last inequality follows from
\begin{align*}
&\left( (-\Delta)^{\frac{1-s}{2}} 
\Div^s_\d \phi + (-\Delta)^{\frac{1-s}{2}}(\phi_1*\Div^s_\d \phi)+\phi_2*\Div^s_\d
 \phi\right)^{\wedge}(\xi) \\
&\qquad \qquad = \left(\abs{2\pi\xi}^{1-s} + \abs{2\pi \xi}^{1-s} \widehat{\phi_1}(\xi) + \widehat{\phi_2}(\xi)\right)\widehat{Q^s_\d}(\xi)2\pi i \xi \cdot \widehat{\phi}(\xi)=2\pi i \xi \cdot \widehat{\phi}(\xi) = \widehat{\Div \phi}(\xi).
\end{align*}
We conclude that $D^s_\d w = \nabla v$, which means that $\Qcal^s_\d  w -v\equiv c$ for some $c \in \R$; if $p<\infty$, then $c=0$ since both $\Qcal^s_\d  w$ and $v$ lie in $L^p(\R^n)$. Therefore,  we obtain
\[
\Qcal^s_\d  \left(w-\frac{c}{\norm{Q^s_\d}_{L^1(\R^n)}}\right)=v,
\]
which shows the surjectivity of $\Qcal^s_\d$ and finishes the proof.
\end{proof}

\begin{remark}\label{rem:connection}
a) Note that the proof of the fractional version of $(i)$ in~\cite[Proposition~3.1\,$(i)$]{KrS22} has to deal with a technical difficulty that the Riesz kernel $I_{1-s}$ is not integrable as opposed to $Q^s_\d$. Therefore, the convolution of $I_{1-s}$ with an $L^p$-function for large $p$ is not always well-defined, whereas $Q^s_\delta$ can be convolved with any $L^p$-function. In particular, there is also no perfect identification between $H^{s,p}(\R^n)$ and $W^{1,p}(\R^n)$ that turns fractional gradients into classical gradients as for the nonlocal case in part $(ii)$ above. \smallskip

b) Regarding part $(ii)$, when $p<\infty$ then the extension of $\Pcal^s_\d$ can also be seen as the unique extension via density. Moreover, if $\Omega$ is a Lipschitz domain, then any $v \in W^{1,p}(\Omega)$ can be extended to a function in $W^{1,p}(\R^n)$, after which we can apply the result to find a $u \in H^{s,p,\d}(\O)$ with $D^s_\d u = \nabla v$ on $\O$. \smallskip

c) The proof of the surjectivity in part~$(ii)$ shows that $\Pcal^s_\d v$ corresponds, up to a constant, to
\[
(-\Delta)^{\frac{1-s}{2}}v+\phi_1*(-\Delta)^{\frac{1-s}{2}}v+\phi_2*v,
\]
for $v \in W^{1,p}(\R^n)$; when $p<\infty$, then the correspondence is even an identity, given that there are no non-zero constants in $L^p(\R^n)$. 

As a particular consequence of this observation, along with the fact that the convolution with a periodic function remains periodic, we observe that both $\Qcal^s_\d$ and $\Pcal^s_\d$ preserve periodicity. Precisely, if  $Y$ denotes the unit cube $(0,1)^n$, and $W^{1,\infty}_{\#}(Y)$ and $H_\#^{s, \infty, \d}(Y)$ comprise all $Y$-periodic functions in $W^{1,\infty}(\R^n)$ and $H^{s, \infty, \d}(\R^n)$, respectively, then there is a bijection between the gradients of $W^{1,\infty}_{\#}(Y)$-functions and the nonlocal gradients of $H^{s,\infty,\d}_{\#}(Y)$-functions. \smallskip

d) For $\phi \in \Scal(\R^n)$, it holds that
\begin{equation}\label{eq:vsdrepr}
\Pcal^s_\d \phi (x) = \int_{\R^n} V^s_\d(x-y)\cdot \nabla \phi (y) \, dy \quad \text{for $x \in \R^n$},
\end{equation}
where $V^s_\d \in C^{\infty}(\Rn \backslash \{0\})$ is the kernel from the nonlocal version of the fundamental theorem of calculus \cite[Theorem 4.5]{BeCuMC22}. Indeed, this follows directly from the formula for the Fourier transform of $V^s_\d$ in \cite[Theorem~5.9]{BeCuMC22}. The representation in \eqref{eq:vsdrepr} extends naturally to functions in $W^{1,p}(\R^n)$ with compact support, given that $V^s_\d$ is locally integrable.\smallskip

 e) 
The translation procedure of Theorem~\ref{prop:connection} allows us to give an alternative proof for the nonlocal Poincar\'e inequality in \cite[Theorem~6.2]{BeCuMC22}. Since $\mathcal{Q}^s_\d$ maps $H^{s,p,\d}_0(\O)$ into $W^{1,p}_0(\O)$, we infer from the classical Poincar\'{e} inequality that 
\begin{align*}
\norm{u}_{L^p(\O)} = \norm{\Pcal^s_\d \mathcal{Q}^s_\d u}_{L^p(\O)} \leq C\norm{\mathcal{Q}^s_\d u}_{W^{1,p}_0(\O)} \leq C \norm{\nabla \mathcal{Q}^s_\d u}_{L^p(\O;\R^n)} = C \norm{D^s_\d u}_{L^p(\O;\R^n)},
\end{align*}
 for any $u \in H^{s,p,\d}_0(\O)$ with a constant $C>0$ depending on $s, \delta, p$, and $\Omega$. 
\end{remark}

We conclude this section with a compactness result that will be used below in the proof of~Theorem~\ref{theo:characterization}.
\begin{lemma}\label{lem:compactnessV}
Let $s\in (0,1)$, $p \in (1,\infty)$ and $\O \subset \R^n$ be open and bounded. 
If $\{v_j\}_{j \in \N} \subset W^{1,p}(\R^n)$ is a bounded sequence, then $\{\Pcal^s_\d v_j\}_{j \in \N}$ (when restricted to $\Omega_\d$) is relatively compact in $L^p(\Omega_\d)$.
\end{lemma}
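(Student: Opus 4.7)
The strategy leverages the isomorphism from Theorem~\ref{prop:connection}\,$(ii)$, which ensures that $\Pcal^s_\d:W^{1,p}(\R^n)\to H^{s,p,\d}(\R^n)$ is a bounded linear operator. Setting $u_j:=\Pcal^s_\d v_j$, the boundedness of $\{v_j\}_{j\in\N}$ in $W^{1,p}(\R^n)$ therefore transfers to boundedness of $\{u_j\}_{j\in\N}$ in $H^{s,p,\d}(\R^n)$, reducing the task to extracting an $L^p(\Omega_\d)$-convergent subsequence from this uniformly bounded nonlocal Sobolev family. I would achieve this by localizing to a bounded domain equipped with the complementary-value structure that makes the compactness result already invoked in the proof of Lemma~\ref{le:strongoutside} applicable.

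Concretely, I would fix a ball $B=B(0,R)\subset\R^n$ with $R$ so large that $\overline{\Omega_\d}\subset B_{-\d}$, and pick a cut-off $\chi\in C_c^{\infty}(B_{-\d})$ with $\chi\equiv 1$ on $\Omega_\d$. The nonlocal Leibniz rule of Lemma~\ref{le:leibniz}, in combination with its quantitative bound $\norm{K_\chi(u_j)}_{L^p(B;\R^n)}\leq C\Lip(\chi)\norm{u_j}_{L^p(B_\d)}$, shows that $\{\chi u_j\}_{j\in\N}\subset H^{s,p,\d}(B)$ is uniformly bounded. Since $\supp(\chi u_j)\subset B_{-\d}$, Proposition~\ref{prop:densitycomplement} further places each $\chi u_j$ in the complementary-value space $H^{s,p,\d}_0(B)$. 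The compact embedding $H^{s,p,\d}_0(B)\hookrightarrow L^p(B_\d)$ from~\cite[Theorem~7.3]{BeCuMC22}---the same ingredient used in Lemma~\ref{le:strongoutside}---then yields, up to extracting a subsequence, convergence of $\{\chi u_j\}$ in $L^p(B_\d)$. Because $\chi\equiv 1$ on $\Omega_\d\subset B_\d$, restriction to $\Omega_\d$ delivers the desired relative compactness of $\{\Pcal^s_\d v_j\}_{j\in\N}$ in $L^p(\Omega_\d)$.

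The only technical point requiring care is the verification that the cut-off product $\chi u_j$ sits in $H^{s,p,\d}_0(B)$ with uniformly bounded norm, which follows directly from combining Lemma~\ref{le:leibniz} with Proposition~\ref{prop:densitycomplement} after the geometry of $B$ and $\chi$ is chosen as above. No substantial obstacle is expected beyond this bookkeeping: the essential work has already been done in Theorem~\ref{prop:connection}\,$(ii)$, which converts the compactness question about $\{\Pcal^s_\d v_j\}$ into a standard compactness question inside a bounded nonlocal Sobolev space with prescribed values on the collar.
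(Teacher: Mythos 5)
Your proof is correct and follows essentially the same route as the paper's: bound $\{\Pcal^s_\d v_j\}$ in $H^{s,p,\d}(\R^n)$ via Theorem~\ref{prop:connection}\,$(ii)$, cut off with $\chi\equiv 1$ on $\Omega_\d$ and compactly supported in the interior collar of a large ball, invoke Lemma~\ref{le:leibniz} for uniform boundedness of the truncated sequence in $H^{s,p,\d}_0(B(0,R))$, and then apply the compact embedding from~\cite[Theorem~7.3]{BeCuMC22}. The only cosmetic difference is that you route the membership in the complementary-value space through Proposition~\ref{prop:densitycomplement}, whereas the paper leaves that step implicit; both are fine since $B(0,R)_{-\d}$ is Lipschitz.
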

\begin{proof}
Let $\chi \in C_c^{\infty}(\R^n)$ with $\chi \equiv 1$ on $\Omega_\d$ and set $R>0$ such that $\supp(\chi) \subset B(0,R-\d)$. Since $\{\Pcal^s_\d v_j\}_j$ is bounded in $H^{s,p,\d}(\R^n)$ by Theorem~\ref{prop:connection}\,$(ii)$, the sequence $\{\chi (\Pcal^s_\d v_j)\}_{j \in \N}$ is bounded in $H^{s,p,\d}_0(B(0,R))$ by Lemma~\ref{le:leibniz}. The relative compactness of $\{\chi (\Pcal^s_\d v_j)\}_{j \in \N}$ in $L^p(B(0,R))$ now follows from ~\cite[Theorem 7.3]{BeCuMC22} and since $\chi\equiv 1$ on $\Omega_\d$, the statement follows.
\end{proof}

\subsection{Connection between nonlocal and fractional gradients}\label{subsec:connection_localfrac}
After the comparison of the nonlocal gradients with classical weak gradients, let us now discuss their connection with the Riesz fractional gradient. We start by recalling that the nonlocal gradient is a truncated version of the latter. 

In the following, let $p\in [1, \infty)$ and $s\in (0,1)$. The upcoming lemma presents the equivalence between the nonlocal and fractional Sobolev spaces $H^{s,p}(\R^n)$ and $H^{s,p,\d}(\R^n)$ and also a version with prescribed complementary values.
To recall the definition of the fractional and nonlocal complementary-value spaces, we have for $\Omega\subset \R^n$ open and bounded that
$H^{s, p}_0(\Omega)$ comprises all functions $u\in H^{s, p}(\R^n)$ such that $u=0$ a.e. in $\Omega^c$ and $H^{s, p, \delta}_0(\Omega)$ is given as in~\eqref{Hspd0}. We mention that one of the inclusions was already provided by~\cite[Proposition 3.5]{BeCuMC22}. For the sake of the reader, we show here a complete proof.
\begin{lemma}\label{le:nonlocalfrac}
It holds that
\begin{align}\label{H0sp=Hospd}
H^{s, p}(\R^n) = H^{s, p, \delta}(\R^n)
\end{align}
with equivalent norms, and 
\begin{align}\label{comparison_nonlocalfrac}
D_\delta^s u = D^su + \nabla R_\delta^s\ast u
\end{align}
for all $u\in H^{s, p}(\R^n)=H^{s, p,\delta}(\R^n)$ with $\nabla R_\delta^s\in C^\infty(\R^n;\R^n)\cap L^1(\R^n;\R^n)$ as in~\eqref{eq:gradR}. Moreover, for $\Omega \subset \R^n$ open and bounded with $\Omega_{-\d}$ Lipschitz, it holds that
\[
H_0^{s, p}(\Omega_{-\d}) = H_0^{s, p, \delta}(\Omega),
\]
with equivalent norms, and \eqref{comparison_nonlocalfrac} holds for $u \in H_0^{s, p}(\Omega_{-\d}) = H_0^{s, p, \delta}(\Omega)$ on $\Omega$.
\end{lemma}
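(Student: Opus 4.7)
The proof hinges on establishing the decomposition \eqref{comparison_nonlocalfrac} first for smooth functions and then propagating it by density. By Proposition~\ref{Prop: convolution with the classical gradient}, for $\phi \in C_c^\infty(\R^n)$ we have $D^s_\delta \phi = Q^s_\delta \ast \nabla \phi$, while the analogous classical identity (cf.~\eqref{eq: fractional to classical gradients}) reads $D^s \phi = I_{1-s} \ast \nabla \phi$. Subtracting yields
\begin{equation*}
D^s_\delta \phi - D^s \phi = (Q^s_\delta - I_{1-s}) \ast \nabla \phi = R^s_\delta \ast \nabla \phi = \nabla R^s_\delta \ast \phi,
\end{equation*}
where the last step is integration by parts. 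The required regularity $\nabla R^s_\delta \in C^\infty(\R^n;\R^n) \cap L^1(\R^n;\R^n)$ is supplied by \eqref{eq:gradR}; heuristically, the singularities of $Q^s_\delta$ and $I_{1-s}$ at the origin cancel because $w_\delta \equiv 1$ on $B(0,b_0\delta)$ by \ref{itm:h3}, while $Q^s_\delta$ is compactly supported and $\nabla I_{1-s}$ decays like $|x|^{-(n+s)}$ at infinity, so that the integration by parts above is legitimate.

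To extend \eqref{comparison_nonlocalfrac} to $u \in H^{s,p}(\R^n)$, I would choose approximations $\{\phi_k\} \subset C_c^\infty(\R^n)$ with $\phi_k \to u$ in the $H^{s,p}$-norm. Then $D^s \phi_k \to D^s u$ in $L^p$, and Young's convolution inequality gives $\nabla R^s_\delta \ast \phi_k \to \nabla R^s_\delta \ast u$ in $L^p$; consequently $\{D^s_\delta \phi_k\}$ converges in $L^p$ to $D^s u + \nabla R^s_\delta \ast u$. Passing to the limit in the distributional identity of Definition~\ref{def: nonlocal gradient 2} against any $\psi \in C_c^\infty(\R^n;\R^n)$ identifies this limit with the weak nonlocal gradient, so $u \in H^{s,p,\delta}(\R^n)$ and the decomposition holds. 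The reverse inclusion is symmetric: for $u \in H^{s,p,\delta}(\R^n)$ use density of $C_c^\infty(\R^n)$ in $H^{s,p,\delta}(\R^n)$ (Theorem~\ref{th:density}) together with the rearrangement $D^s \phi_k = D^s_\delta \phi_k - \nabla R^s_\delta \ast \phi_k$ and the distributional characterisation of $D^s$. The inequalities $\|D^s_\delta u\|_{L^p} \leq \|D^s u\|_{L^p} + \|\nabla R^s_\delta\|_{L^1} \|u\|_{L^p}$ and its counterpart yield the norm equivalence directly.

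For the complementary-value statement, observe that by definition $H^{s,p,\delta}_0(\Omega)$ is the closure of $C_c^\infty(\Omega_{-\delta})$ in the $H^{s,p,\delta}(\Omega)$-norm, while $H^{s,p}_0(\Omega_{-\delta})$ is the closure of the same smooth class in the $H^{s,p}(\R^n)$-norm (after the usual identification of a compactly supported test function with its zero extension, respectively its restriction to $\Omega_\delta$). Since the two norms have just been shown to be equivalent on $H^{s,p}(\R^n) = H^{s,p,\delta}(\R^n)$, the closures coincide and the norms remain equivalent on them. The pointwise identity \eqref{comparison_nonlocalfrac} on $\Omega$ then follows by applying the $\R^n$-version to the trivial extension of $u \in H^{s,p}_0(\Omega_{-\delta})$ and restricting.

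The one step that requires care is the density argument itself: the limit must be taken against the distributional definitions of \emph{both} $D^s$ and $D^s_\delta$, not pointwise, so one has to verify that the $L^p$-limits do coincide with the correct weak gradients of $u$. All remaining ingredients are routine convolution estimates once the kernel bound $\nabla R^s_\delta \in L^1(\R^n;\R^n)$ is granted from \eqref{eq:gradR}.
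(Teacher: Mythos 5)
Your proposal is correct and follows essentially the same strategy as the paper: establish the kernel decomposition $D^s_\delta\phi - D^s\phi = \nabla R^s_\delta * \phi$ for $\phi\in C_c^\infty(\R^n)$ by subtracting $D^s_\delta\phi = Q^s_\delta*\nabla\phi$ and $D^s\phi = I_{1-s}*\nabla\phi$, use Young's inequality and $\nabla R^s_\delta \in L^1$ for the two-sided norm estimate, and propagate by density. The only place you depart slightly from the paper is in the bounded-domain case. You assert that $H^{s,p}_0(\Omega_{-\delta})$ is by definition the closure of $C_c^\infty(\Omega_{-\delta})$ in the $H^{s,p}(\R^n)$-norm, but the paper defines it as the set $\{u\in H^{s,p}(\R^n) : u = 0 \text{ a.e. in } (\Omega_{-\delta})^c\}$; equality of the two is a nontrivial density fact that holds for Lipschitz domains. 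The paper instead converts $H^{s,p,\delta}_0(\Omega)$ \emph{into} the support-condition form via Proposition~\ref{prop:densitycomplement}, then observes that the zero extension $\bar{u}$ of any $u\in H^{s,p,\delta}_0(\Omega)$ lies in $H^{s,p,\delta}(\R^n)$ with $D^s_\delta\bar{u}$ equal to the zero extension of $D^s_\delta u$ (this uses the $\delta$-locality of the kernel so that the gradient is supported in $\overline{\Omega}$ and the two norms coincide exactly), and then the identity of the two spaces follows directly from \eqref{H0sp=Hospd}. Your route is valid but silently invokes the Lipschitz density result for fractional spaces in place of the paper's Proposition~\ref{prop:densitycomplement}; it would be worth making that ingredient explicit. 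You also do not spell out why the $H^{s,p,\delta}(\Omega)$-norm of a function supported in $\overline{\Omega_{-\delta}}$ agrees with the full-space $H^{s,p,\delta}(\R^n)$-norm of its zero extension, which is the step that makes ``the closures coincide'' correct.
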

\begin{proof} Let $\varphi\in C_c^\infty(\R^n)$. Since by \eqref{eq:comparison_nonlocalfrac0}
\begin{align*}\label{comparison_nonlocalfrac2}
D^s_\delta \varphi-D^s\varphi = \nabla R_\delta^s\ast \varphi,
\end{align*}
we obtain the estimates
\begin{align*}
\norm{D^s\varphi}_{L^p(\R^n;\R^n)} \leq \norm{D^s_\delta\varphi}_{L^p(\R^n;\R^n)} + \norm{\nabla R_\delta^s}_{L^1(\R^n;\R^n)} \norm{\varphi}_{L^p(\R^n)} \leq C\norm{\varphi}_{H^{s, p, \delta}(\R^n)} 
\end{align*}
and
\begin{align*}
\norm{D^s_\d\varphi}_{L^p(\R^n;\R^n)} \leq \norm{D^s\varphi}_{L^p(\R^n;\R^n)} + \norm{\nabla R_\delta^s}_{L^1(\R^n;\R^n)} \norm{\varphi}_{L^p(\R^n)} \leq C\norm{\varphi}_{H^{s, p}(\R^n)} 
\end{align*}
with a constant $C>0$. In light of the density of $C_c^{\infty}(\R^n)$ in $H^{s,p}(\R^n)$ and $H^{s,p,\d}(\R^n)$ (see~\cite[Theorem~2.7]{KrS22} and Theorem~\ref{th:density}), the identity \eqref{H0sp=Hospd} and \eqref{comparison_nonlocalfrac} follow via approximation. For the case of a bounded domain, we note that 
\[
H^{s,p,\d}_0(\O)=\{u \in H^{s,p,\d}(\O)\,:\, u = 0 \ \text{a.e.~in $\Omega_\d \setminus \Omega_{-\d}$}\}
\]
since $\Omega_{-\d}$ is Lipschitz (cf.~Proposition~\ref{prop:densitycomplement}). Observe also that for any $u \in H^{s,p,\d}_0(\O)$ its extension $\bar{u}$ to $\R^n$ by zero lies in $H^{s,p,\d}(\R^n)$ with
\[
\norm{\bar{u}}_{L^p(\R^n)} + \norm{D^s_\d \bar{u}}_{L^p(\R^n;\R^n)}=\norm{u}_{L^p(\O)}+\norm{D^s_\d u}_{L^p(\O;\R^n)},
\]
since $D^s_\d \bar{u}$ is simply the extension of $D^s_\d u$ by zero. Hence, we may identify
\[
H^{s,p,\d}_0(\O)=\{u \in H^{s,p,\d}(\R^n)\,:\, u = 0 \ \text{a.e.~in $(\Omega_{-\d})^c$}\},
\]
after which the equality with $H^{s,p}(\Omega_{-\d})$ becomes obvious given \eqref{H0sp=Hospd}.
\end{proof}
\begin{remark}\label{rem:hspd=hsp}
We mention that it also holds that
\[
H^{s,\infty}(\R^n) = H^{s,\infty,\d}(\R^n),
\]
with equivalent norms and $D_\delta^s u = D^su + \nabla R_\delta^s\ast u$ for $u \in H^{s,\infty}(\R^n)$. This can be proven via a distributional approach instead of utilizing density as above.
\end{remark}

 As already indicated in the introduction, Lemma~\ref{subsec:connection_localfrac} opens up a new proof strategy for some of the results in this paper. Instead of exploiting well-known result for problems involving classical gradients, one can resort to established findings in the fractional setting. We illustrate this approach below by presenting an alternative proof for the characterization of lower semicontinuity in Section~\ref{section:wls}, which follows as a corollary of~\cite[Theorems~4.1 and~4.5]{KrS22}.
An analogous reasoning could also be used, for instance, to deduce the relaxation below in Corollary~\ref{cor:relaxation} from \cite[Theorem~1.2]{KrS22}.
Note that the transfer of results between the nonlocal and fractional set-up also works in the reverse direction, giving rise to analogues of the general $\Gamma$-convergence statement in Theorem~\ref{th:gamgeneral} and homogenization result of Corollary~\ref{cor:homogenization}.

\section{Asymptotics of the nonlocal gradient and applications}\label{sec:asymptotics}

Our next goal is to study the localization of the nonlocal gradient as $s \to 1$, and more generally, to understand how the nonlocal gradient depends on the fractional parameter $s$.
In particular, the findings in this section serve as necessary preparations for proving the $\Gamma$-convergence of nonlocal integral functionals in Section~\ref{sec:Gamma}.

We start by investigating the $s$-dependence of the convolution kernel $Q^s_\d$  from~\eqref{Qdeltas} and its Fourier transform.
\begin{lemma}\label{le:qnorm} 
Let $\eps>0$ and $R>0$. 
\begin{itemize}
\item[$(i)$]  The map $[0,1) \to L^1(\Rn),  \,s\mapsto Q^s_\d$
is continuous with 
\begin{align}\label{convergenceQtwice}
\lim_{s \to 1} \norm{Q^s_\d}_{L^1(\R^n)} =1 \quad \text{ and} \quad \lim_{s\to 1} \norm{Q^s_\d}_{L^1(B(0, \eps)^c)}=0.
\end{align}
\item[$(ii)$] The map $[0,1) \to C(\overline{B(0, R)}), \, s\mapsto \widehat{Q}^s_\d$ is continuous with $\widehat{Q}^s_\d \to 1$ uniformly on $\overline{B(0, R)}$ as $s \to 1$.
\end{itemize}
\end{lemma}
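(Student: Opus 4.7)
My plan is to reduce everything to an explicit computation of $\norm{Q^s_\delta}_{L^1(\R^n)}$, combine it with the Gamma-function asymptotic $\Gamma((1-s)/2)\sim 2/(1-s)$ as $s\to 1$, and then deduce the Fourier statements in $(ii)$ from standard properties of the $L^1$-Fourier transform.

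For part $(i)$, an exchange of order of integration using the definition~\eqref{Qdeltas} and $\abs{B(0,t)}=\abs{B(0,1)}\,t^n$ gives
\[
\norm{Q^s_\delta}_{L^1(\R^n)}=c_{n,s}\abs{B(0,1)}\int_0^\delta \overline{w}_\delta(t)\,t^{-s}\,dt,
\]
and analogously $\norm{Q^s_\delta}_{L^1(B(0,\eps)^c)}$ is controlled by $c_{n,s}\abs{B(0,1)}\int_\eps^\delta t^{-s}\,dt$. From the Gamma asymptotic one finds $c_{n,s}\abs{B(0,1)}\sim 1-s$ as $s\to 1$. Using the cut-off property \ref{itm:h3} to split $\int_0^\delta\overline{w}_\delta(t)t^{-s}\,dt=\int_0^\delta t^{-s}\,dt-\int_{b_0\delta}^\delta(1-\overline{w}_\delta(t))t^{-s}\,dt$, multiplication by $(1-s)$ turns the first term into $\delta^{1-s}\to 1$ and makes the second vanish; this gives the first limit in~\eqref{convergenceQtwice}. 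The second limit follows from the same $(1-s)\to 0$ prefactor together with the fact that $\int_\eps^\delta t^{-s}\,dt$ is uniformly bounded in $s$ (by $\eps^{-1}\delta$, say). Continuity of $s\mapsto Q^s_\delta$ at every $s_0\in[0,1)$ is then handled by dominated convergence on a neighborhood $[s_0-\eta,s_0+\eta]\Subset[0,1)$ using the pointwise bound $Q^s_\delta(x)\leq\tfrac{c_{n,s}}{n+s-1}\abs{x}^{1-n-s}$ (for $n+s-1>0$; the case $n=1$, $s\approx 0$ is handled directly since $Q^0_\delta\in L^1(\R^n)$), which supplies a locally $s$-uniform integrable majorant on $B(0,\delta)$.

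For part $(ii)$, continuity of $s\mapsto\widehat{Q}^s_\delta$ into $C(\overline{B(0,R)})$ is immediate from part $(i)$, since the Fourier transform is bounded from $L^1(\R^n)$ into $C_0(\R^n)$. For the uniform convergence $\widehat{Q}^s_\delta\to 1$ on $\overline{B(0,R)}$, I split
\[
|\widehat{Q}^s_\delta(\xi)-1|\leq\int_{\R^n}|e^{-2\pi i x\cdot\xi}-1|\,Q^s_\delta(x)\,dx+\bigl|\norm{Q^s_\delta}_{L^1(\R^n)}-1\bigr|;
\]
the second summand vanishes by part $(i)$, and the first is bounded using $|e^{-2\pi i x\cdot\xi}-1|\leq 2\pi R\abs{x}$ followed by a split of the integral into $B(0,\eps)$ and $B(0,\eps)^c$, yielding the estimate $2\pi R\eps\norm{Q^s_\delta}_{L^1(\R^n)}+2\pi R\delta\norm{Q^s_\delta}_{L^1(B(0,\eps)^c)}$. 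Choosing $\eps$ small first and then $s$ close to $1$ makes both terms simultaneously small, uniformly in $\xi\in\overline{B(0,R)}$.

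The main obstacle is the delicate cancellation between the vanishing prefactor $c_{n,s}=O(1-s)$ and the divergent integral $\int_0^\delta t^{-s}\,dt=O(1/(1-s))$ near $s=1$; once this interplay is tracked precisely via the Gamma asymptotic together with the cut-off hypothesis~\ref{itm:h3}, the remaining claims reduce to routine applications of Fubini, dominated convergence, and the $L^1$--$C_0$ mapping property of the Fourier transform.
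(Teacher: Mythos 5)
Your proof follows the paper's strategy essentially verbatim: Fubini to compute $\norm{Q^s_\delta}_{L^1(\R^n)} = c_{n,s}\,\omega_n\int_0^\delta\overline{w}_\delta(t)\,t^{-s}\,dt$, the asymptotic $c_{n,s}\sim(1-s)/\omega_n$ coming from the Gamma function, the cut-off hypothesis \ref{itm:h3} to control the radial integral and extract the two limits in \eqref{convergenceQtwice}, and the $L^1\to C_0$ mapping property of the Fourier transform combined with an $\eps$-split for part $(ii)$ (the paper packages your $\eps$-split as a mollifier estimate on $\Lipb(\R^n)$ and then tests against $\phi_\xi=e^{-2\pi i\xi\cdot}$, which is the same computation). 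The one place your write-up is slightly less tight is the $L^1$-continuity of $s\mapsto Q^s_\delta$: your proposed majorant $c_{n,s}(n+s-1)^{-1}|x|^{1-n-s}$ degenerates at $s_0=0$ when $n=1$ (the factor $1/(n+s-1)=1/s$ blows up), and the remark that ``$Q^0_\delta\in L^1(\R^n)$'' does not by itself furnish an $s$-uniform dominating function near $s_0=0$; a usable majorant can still be built via a mean-value bound on $t\mapsto t^{-s}$, but the paper's route — pointwise a.e.\ convergence of $Q^{s_j}_\delta$ combined with convergence of the $L^1$-norms, i.e.\ Scheff\'e's lemma — sidesteps this edge case cleanly.
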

\begin{proof}
As for $(i)$, we calculate first that for $s\in [0,1)$,
\begin{align}\label{L1normQ}
\begin{split}
\norm{Q^s_\d}_{L^1(\R^n)}&=c_{n,s}\int_{B(0,\d)}\int_{\abs{x}}^\d \frac{\overline{w}_{\d}(r)}{r^{n+s}}\,dr\,dx =c_{n,s}\abs{\partial B(0,1)}\int_0^\d \int_\rho^\d \frac{\overline{w}_{\d}(r)}{r^{n+s}}\rho^{n-1}\,dr\,d\rho\\
&=c_{n,s}\abs{\partial B(0,1)}\int_0^\d \frac{\overline{w}_{\d}(r)}{r^{n+s}}\int_0^r \rho^{n-1}\,d\rho\,dr=c_{n,s}\frac{\abs{\partial B(0,1)}}{n}\int_0^\d \frac{\overline{w}_{\d}(r)}{r^{s}}\,dr\\
&=c_{n,s}\,\omega_n \int_0^\d \frac{\overline{w}_\d(r)}{r^s}\,dr,
\end{split}
\end{align}
where $\omega_n$ denotes the volume of the unit ball in $\R^n$.
Since $s\mapsto c_{n,s}$ is continuous on $[0,1)$, cf.~\cite[Lemma~2.4]{BeCuMC21}, it follows via Lebesgue's dominated convergence that $\norm{Q^s_\d}_{L^1(\R^n)}$ depends continuously on $s$. Now, if  $\{s_j\}_{j \in \N} \subset [0,1)$ is a sequence converging to $s\in [0,1)$, we can apply once again Lebesgue's dominated convergence theorem to find that $Q^{s_j}_\d \to Q^s_\d$ pointwise a.e.~as $j \to \infty$. Together with $\lim_{j\to \infty}\norm{Q^{s_j}_\d}_{L^1(\R^n)} =\norm{Q^{s}_\d}_{L^1(\R^n)}$ as shown above, this implies $Q^{s_j}_\d \to Q^s_\d$ in $L^1(\R^n)$ for $j \to \infty$.

To see the first convergence in~\eqref{convergenceQtwice}, we observe that $\mathbbm{1}_{B(0,b_0\d)} \leq w_\d \leq \mathbbm{1}_{B(0,\d)}$ by~\ref{itm:h3} and~\ref{itm:h4}, which gives
\begin{equation}\label{eq:boundinteg}
\frac{(b_0\delta)^{1-s}}{1-s}\leq \int_0^\d \frac{\overline{w}_\d(r)}{r^s}\,dr \leq \frac{\delta^{1-s}}{1-s},
\end{equation}
and exploit $c_{n,s}/(1-s) \to 1/\omega_n$ as $s \to 1$ according to \cite[Lemma~2.4]{BeCuMC21}. The localization of $Q_\delta^s$ for $s\to 1$ follows from a calculation similar to~\eqref{L1normQ} and \eqref{eq:boundinteg}, integrating instead over $B(\epsilon,\d)$ and using that $c_{n,s}/(1-s)$ stays bounded as $s \to 1$. 
\smallskip 

The first part of $(ii)$ can be deduced from the continuity of the map in $(i)$ in combination with the fact that the Fourier transform is a bounded linear operator from $L^1(\R^n)$ to $C_0(\R^n;\C)$. 

Due to~\eqref{convergenceQtwice}, the kernel $Q_\d^s$ behaves like a mollifier, satisfying
\begin{equation}\label{eq:mollifier}
\lim_{s \to 1} \norm{Q^s_\d*\phi - \phi}_{L^{\infty}(\R^n)}=0
\end{equation}
for all $\phi \in \Lipb(\R^n)$, where this convergence is uniform on bounded sets of $\Lipb(\R^n)$. Indeed, 
\begin{align*}
\norm{Q^s_\d*\phi - \phi}_{L^{\infty}(\R^n)} &\leq \norm{(\mathbbm{1}_{B(0,\epsilon)}Q^s_\d)*\phi - \phi}_{L^{\infty}(\R^n)} + \norm{Q^s_\d}_{L^{1}(B(0,\epsilon)^c)}\norm{\phi}_{L^{\infty}(\R^n)}\\
&\leq \epsilon\Lip(\phi)\norm{Q^s_\d}_{L^1(B(0,\epsilon))} + \left(\abs{1-\norm{Q^s_\d}_{L^1(B(0,\epsilon))}} + \norm{Q^s_\d}_{L^{1}(B(0,\epsilon)^c)}\right)\norm{\phi}_{L^{\infty}(\R^n)},
\end{align*}
for any $\epsilon >0$. Considering now $\phi_{\xi}(x) = e^{-2\pi i \xi \cdot x}$ for $\xi \in \overline{B(0,R)}$, we have
\[
\norm{\phi_{\xi}}_{L^{\infty}(\R^n;\C)} + \Lip(\phi_{\xi}) \leq 1+ 2\pi \abs{\xi} \leq 1+2\pi R,
\]
so that by~\eqref{eq:mollifier},
\[
\lim_{s \to 1} \widehat{Q^s_\d}(\xi) = \lim_{s \to 1} (Q^s_\d * \phi_{\xi}) (0) = \phi_{\xi}(0)=1,
\]
uniformly for $\xi \in \overline{B(0,R)}$.
\end{proof}

The next lemma addresses the continuous dependence of the nonlocal gradient and divergence on the fractional parameter in the case of smooth test functions with compact support. Recall the notation $D^1_\d u :=\nabla u$.
\begin{lemma}\label{le:orderconvergencesmooth}
Let $s \in [0,1]$ and $\{s_j\}_{j \in \N} \subset [0,1]$ a sequence converging to $s$. Then, it holds for every $\phi \in C_c^{\infty}(\R^n)$ and $\psi\in C_c^\infty(\Rn;\Rn)$ that 
\begin{align*}D^{s_j}_\d \phi \to D^s_\d \phi\qquad \text{and} \qquad \Div_\delta^{s_j} \psi\to \Div_\delta^s \psi
\end{align*}
uniformly on $\R^n$ as $j \to \infty$.
\end{lemma}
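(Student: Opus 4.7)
The plan is to reduce everything to convergence properties of the kernel $Q^s_\d$ established in Lemma~\ref{le:qnorm}, using the convolution representations of the nonlocal operators. By Proposition~\ref{Prop: convolution with the classical gradient} we have $D^s_\d \phi = Q^s_\d \ast \nabla \phi$ for $s\in [0,1)$, and by the same argument (or the identity used in the proof of Lemma~\ref{th:Nl parts}) $\Div^s_\d \psi = Q^s_\d \ast \Div \psi$. Since $\nabla \phi$ and $\Div \psi$ are both in $C_c^{\infty}(\R^n)$, the two statements are proved identically, so I will only treat $D^{s_j}_\d \phi \to D^s_\d \phi$.

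I would split into two cases according to the limit $s$. \textbf{Case 1: $s \in [0,1)$.} Discarding the finitely many indices with $s_j = 1$, we may assume $s_j \in [0,1)$ for all $j$. Young's convolution inequality then gives
\begin{align*}
\| D^{s_j}_\d \phi - D^s_\d \phi \|_{L^{\infty}(\R^n;\R^n)} = \|(Q^{s_j}_\d - Q^s_\d)\ast \nabla \phi\|_{L^{\infty}(\R^n;\R^n)} \leq \|Q^{s_j}_\d - Q^s_\d\|_{L^1(\R^n)}\, \|\nabla \phi\|_{L^{\infty}(\R^n;\R^n)},
\end{align*}
and the right-hand side tends to zero by the $L^1$-continuity of $s \mapsto Q^s_\d$ in Lemma~\ref{le:qnorm}$(i)$. \textbf{Case 2: $s = 1$.} For indices with $s_j = 1$ the difference vanishes, so again assume $s_j \in [0,1)$. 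Componentwise, the task is to show that $Q^{s_j}_\d \ast \partial_i \phi \to \partial_i \phi$ uniformly on $\R^n$. This is exactly the mollifier-type estimate \eqref{eq:mollifier} established in the course of the proof of Lemma~\ref{le:qnorm}$(ii)$, namely
\begin{align*}
\|Q^{s_j}_\d \ast \psi - \psi\|_{L^{\infty}(\R^n)} \leq \eps\, \Lip(\psi)\, \|Q^{s_j}_\d\|_{L^1(B(0,\eps))} + \Bigl( \bigl|1 - \|Q^{s_j}_\d\|_{L^1(B(0,\eps))}\bigr| + \|Q^{s_j}_\d\|_{L^1(B(0,\eps)^c)} \Bigr) \|\psi\|_{L^{\infty}(\R^n)},
\end{align*}
valid for every $\psi \in \Lipb(\R^n)$ and $\eps>0$; applying this with $\psi = \partial_i \phi \in C_c^{\infty}(\R^n) \subset \Lipb(\R^n)$ and choosing first $\eps$ small and then $j$ large, the right-hand side can be made arbitrarily small by the two limits in~\eqref{convergenceQtwice}.

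The only genuinely delicate point is the localization $s \to 1$, where the nonlocal operator degenerates into the classical gradient; however, the combination of $\|Q^s_\d\|_{L^1(\R^n)} \to 1$ and the concentration $\|Q^s_\d\|_{L^1(B(0,\eps)^c)} \to 0$ from Lemma~\ref{le:qnorm}$(i)$ turns $\{Q^s_\d\}_{s \to 1}$ into an approximate identity on $\Lipb(\R^n)$, which handles this obstacle cleanly.
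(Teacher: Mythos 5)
Your proof is correct and follows essentially the same route as the paper's: Proposition~\ref{Prop: convolution with the classical gradient} to pass to convolution with $Q^s_\d$, Young's inequality together with the $L^1$-continuity of $s\mapsto Q^s_\d$ from Lemma~\ref{le:qnorm}$(i)$ for $s<1$, and the approximate-identity estimate~\eqref{eq:mollifier} applied to $\nabla\phi\in\Lipb$ for $s=1$. The only cosmetic differences are your explicit discarding of the finitely many indices with $s_j=1$ and the componentwise phrasing for $\partial_i\phi$, neither of which changes the argument.
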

\begin{proof} 
It suffices to focus on proving the convergence of the nonlocal gradient; the argument for the divergence is an immediate consequence. If $s<1$, we conclude from Proposition~\ref{Prop: convolution with the classical gradient}, Young's convolution inequality, and Lemma~\ref{le:qnorm}\,$(i)$ that
\[
\norm{D^{s_j}_\d \phi - D^s_\d \phi}_{L^{\infty}(\R^n;\R^n)} \leq \norm{Q^{s_j}_\d-Q^{s}_\d}_{L^1(\R^n)}\norm{\nabla \phi}_{L^{\infty}(\R^n;\R^n)} \to 0 \ \ \text{as $j \to \infty$}.
\]
The case $s=1$ follows immediately from \eqref{eq:mollifier}, since $\nabla \phi \in \Lipb(\R^n)$ allows us to conclude that
\[
\lim_{j \to \infty}\norm{D^{s_j}_\d \phi - \nabla \phi}_{L^{\infty}(\R^n;\R^n)} = \lim_{j \to \infty} \norm{Q^{s_j}_\d * \nabla \phi - \nabla \phi}_{L^{\infty}(\R^n;\R^n)} =0.
\]
\end{proof}

Our approach to extending the previous results for smooth functions in a suitable way to nonlocal Sobolev spaces, relies on the following estimate (see Corollary~\ref{cor:orderinequality} below),
\begin{equation}\label{estst}
\norm{D^s_\d u}_{L^p(\R^n;\R^n)} \leq C \norm{D^t_\d u}_{L^p(\R^n;\R^n)} \quad \text{for all $u \in C_c^{\infty}(\R^n)$ and $0\leq s\leq t \leq 1$},
\end{equation}
with a constant $C>0$ depending only on $n, p, \delta$.  
If $t=1$,~\eqref{estst} simply follows from Young's convolution inequality
\begin{equation}\label{eq:classicalbound}
\norm{D^s_\d u}_{L^p(\R^n;\R^n)}\leq \norm{Q^s_\d}_{L^1(\R^n)} \norm{\nabla u}_{L^p(\R^n;\R^n)} \leq C\norm{\nabla u}_{L^p(\R^n;\R^n)},
\end{equation}
where we have exploited that $\norm{Q^s_\d}_{L^1(\R^n)}$ is bounded by $C$ uniformly in $s$ as a consequence of Lemma~\ref{le:qnorm}. If $t=s$, one can obviously take the constant to be $1$.
For the other cases, we build on Fourier multiplier theory (see e.g.,~\cite[Chapter~5]{Gra14a}) and show via the Mihlin-H\"ormander theorem that the maps 
\begin{align}\label{multiplier_mst}
m^{s}_t:\Rn 
 \to \R, \quad \xi\mapsto\frac{\widehat{Q}^s_\d(\xi)}{\widehat{Q}^t_\d(\xi)}
\end{align}
are $L^p$-multipliers with uniformly bounded norms. 
This requires control on the decay behavior of $m^{s}_t$ and its derivatives. The idea for deriving suitable bounds for large frequencies is to compare $Q^{s}_\d$ with the well-known Riesz potential kernel $I_{1-s}$ (cf.~\eqref{Rieszkernel}) and exploit the decay of the difference of their Fourier transforms uniformly in $s$ (see Lemma~\ref{le:decayR}). 

\begin{lemma}\label{lem:multiplier}
The map $m^{s}_t:\R^n\to \R$ from~\eqref{multiplier_mst} with $0\leq s \leq t<1$ is an $L^p$-multiplier for every $p \in (1,\infty)$ with multiplier norm independent of the parameters $s, t$. 
\end{lemma}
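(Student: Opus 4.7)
The plan is to verify the Mihlin--H\"ormander condition for $m^{s}_t$, so that the multiplier-norm bound comes directly from the theorem. Concretely, I would prove the existence of a constant $C>0$, independent of $0\leq s\leq t<1$, with
\begin{equation*}
|\xi|^{|\alpha|}\,|\partial^{\alpha} m^{s}_t(\xi)| \leq C \qquad \text{for all $\xi \neq 0$ and multi-indices $\alpha$ with $|\alpha| \leq \lfloor n/2 \rfloor + 1$.}
\end{equation*}
The argument splits naturally between the low-frequency regime $|\xi|\leq 1$ and the high-frequency regime $|\xi|\geq 1$.

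In the low-frequency regime I would exploit that $Q^s_\d$ is supported in $\overline{B(0,\delta)}$ with $L^1$-norm uniformly bounded in $s\in[0,1)$ by Lemma~\ref{le:qnorm}. The formula $\partial^{\alpha}\widehat{Q}^s_\d(\xi) = ((-2\pi i\,\cdot\,)^{\alpha}Q^s_\d)^{\wedge}(\xi)$ then yields uniform-in-$s$ pointwise bounds on all derivatives of $\widehat{Q}^s_\d$ over $\overline{B(0,1)}$. For a matching strictly positive lower bound on $\widehat{Q}^s_\d$ over the same set, I would appeal to joint continuity of $(s,\xi)\mapsto\widehat{Q}^s_\d(\xi)$ on $[0,1]\times\overline{B(0,1)}$ --- guaranteed by Lemma~\ref{le:qnorm}\,$(ii)$ together with the natural extension $\widehat{Q}^1_\d\equiv 1$ --- combined with the strict positivity of $\widehat{Q}^s_\d$ for $s<1$ recalled in Remark~\ref{rem:Qdeltas}, and conclude via compactness. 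The quotient rule then furnishes the Mihlin bounds on this regime.

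In the high-frequency regime I would use the identification of $Q^s_\d$ with the Riesz potential kernel $I_{1-s}$ modulo a smoother remainder, as provided by~\eqref{Rhatsd}, which I read as
\begin{equation*}
\widehat{Q}^s_\d(\xi) = (2\pi|\xi|)^{s-1} + \widehat{R}^s_\d(\xi),
\end{equation*}
with $\widehat{R}^s_\d$ and its derivatives decaying at rates that are uniform in $s$ thanks to~\eqref{eq:decayRhat} and Lemma~\ref{le:decayR}. Factoring out the homogeneous part yields
\begin{equation*}
m^{s}_t(\xi) = (2\pi|\xi|)^{s-t}\;\cdot\;\frac{1+(2\pi|\xi|)^{1-s}\widehat{R}^s_\d(\xi)}{1+(2\pi|\xi|)^{1-t}\widehat{R}^t_\d(\xi)}.
\end{equation*}
The first factor is a classical Mihlin symbol of order $s-t\leq 0$, bounded on $|\xi|\geq 1$ and satisfying the scale-invariant derivative estimates with a constant independent of $(s,t)$. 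The second factor has numerator and denominator confined to a compact subset of $(0,\infty)$ uniformly in $(s,t,\xi)$, and its derivatives satisfy the Mihlin estimate through the uniform decay of $(2\pi|\xi|)^{1-s}\widehat{R}^s_\d(\xi)$.

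The main obstacle is ensuring uniformity of all bounds across the entire parameter range, in particular as $s,t\to 1$ where $Q^s_\d$ degenerates toward a Dirac mass and individual pointwise estimates collapse. This is controlled in the low-frequency regime by extending continuity of $s\mapsto\widehat{Q}^s_\d$ up to $s=1$ via Lemma~\ref{le:qnorm}\,$(ii)$, and in the high-frequency regime by the quantitative, $s$-uniform decay estimates on $\widehat{R}^s_\d$. Once these uniformities are secured, the Mihlin--H\"ormander theorem delivers an $L^p$-multiplier norm depending only on $n$, $p$, and $\delta$.
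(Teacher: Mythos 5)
Your proposal follows essentially the same strategy as the paper's proof: verify the Mihlin--H\"ormander condition, split into low- and high-frequency regimes, use the uniform $L^1$-bounds on $(-2\pi i\,\cdot\,)^{\alpha}Q^s_\d$ and the compactness/continuity of $(s,\xi)\mapsto\widehat{Q}^s_\d(\xi)$ for the low-frequency bounds, and compare $\widehat{Q}^s_\d$ with $|2\pi\xi|^{-(1-s)}$ via $\widehat{R}^s_\d$ for the high-frequency bounds. The only structural difference is that you factor out the homogeneous part multiplicatively,
\[
m^s_t(\xi) = (2\pi|\xi|)^{s-t}\cdot\frac{1+(2\pi|\xi|)^{1-s}\widehat{R}^s_\d(\xi)}{1+(2\pi|\xi|)^{1-t}\widehat{R}^t_\d(\xi)},
\]
whereas the paper writes the equivalent additive identity $m^s_t(\xi)=|2\pi\xi|^{-(t-s)}+r^s_t(\xi)$ and estimates $r^s_t$ by the quotient rule. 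Both are fine; since the product of Mihlin symbols is again a Mihlin symbol, your factorization is perfectly workable and arguably a little cleaner because the denominator $1+(2\pi|\xi|)^{1-t}\widehat{R}^t_\d$ is comparable to $1$ for $|\xi|$ large, so the quotient rule produces no compensating negative powers of $|\xi|$.

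There is one small gap to close. You assert that for $|\xi|\geq 1$ the denominator $1+(2\pi|\xi|)^{1-t}\widehat{R}^t_\d(\xi)$ is ``confined to a compact subset of $(0,\infty)$ uniformly in $(s,t,\xi)$'' and justify this by the decay of $(2\pi|\xi|)^{1-s}\widehat{R}^s_\d$. The decay estimate $|\widehat{R}^t_\d(\xi)|\lesssim|\xi|^{-2}$ (cf.~Lemma~\ref{le:decayR}) yields $(2\pi|\xi|)^{1-t}|\widehat{R}^t_\d(\xi)|\lesssim |\xi|^{-t-1}\lesssim 1/|\xi|$, which guarantees the denominator is pinned near $1$ only for $|\xi|\geq R$ with $R$ large enough; it does not directly give a uniform positive lower bound on the whole range $1\leq|\xi|\leq R$. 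The paper avoids this by explicitly introducing such an $R\geq 1$ in the high-frequency step, leaving the strip $1\leq|\xi|\leq R$ to the local-bound step. To repair your version, either enlarge the high-frequency threshold to $|\xi|\geq R$ accordingly, or note that your compactness-plus-continuity argument (which you invoke only for $|\xi|\leq 1$) extends verbatim to the compact set $\overline{B(0,R)}$; one also needs to observe that positivity of $(2\pi|\xi|)^{1-t}\widehat{Q}^t_\d(\xi)$, which equals the denominator for $|\xi|\geq 1$, persists up to $t=1$ where the expression becomes $\equiv 1$. With that adjustment the argument is complete.
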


\begin{proof}
According to the Mihlin-H\"{o}rmander multiplier theorem, see e.g.,~\cite[Theorem~6.2.7]{Gra14a}, the statement follows immediately once these estimates have been established: There exists a constant $C>0$ depending only on $n$ and $\d$ such that for every $\alpha \in \N^n_0$ with $\abs{\alpha} \leq n/2 +1$ and every $0\leq s \leq t <1$,
\begin{equation}\label{eq:mihlinbounds}
\abs{\xi}^{\abs{\alpha}}\abslr{\partial^{\alpha}m^{s}_t(\xi)} \leq C \qquad \text{for all $\xi \in  \R^n$}. 
\end{equation}
The proof is split in two parts, where we distinguish bounds for large and small frequencies. Note that in the following all the constants $C,c>0$ are independent of $s,t$.\smallskip

\textit{Step 1: Bounds away from zero}. In this step, we show that there is some $R\geq 1$ such that \eqref{eq:mihlinbounds} holds for all $\abs{\xi} \geq R$.  Since 
\[
\widehat{Q}^s_\d(\xi) = \abs{2\pi\xi}^{-(1-s)} + \widehat{R}^s_\d(\xi) \qquad \text{for $\abs{\xi} \geq 1$}
\]
for any $s\in [0,1)$ by~\eqref{Rhatsd}, 
we can express $m^{s}_t$ on $B(0,1)^c$ as
\begin{align}\label{eq:simplifyfrac}
m^{s}_t(\xi)=\frac{\widehat{Q}^s_\d(\xi)}{\widehat{Q}^t_\d(\xi)} =\abs{2\pi\xi}^{-(t-s)} + r^{s}_t(\xi)
\end{align}
with $$r^{s}_t(\xi) := \frac{-|2\pi\xi|^{-(t-s)}\widehat{R}_\delta^t(\xi)+\widehat{R}^s_\d(\xi)}{\abs{2\pi\xi}^{-(1-t)}+\widehat{R}^t_\d(\xi)}.$$
Given $t\geq s$, it is clear that
\begin{equation}\label{eq:firstterm}
\partial^{\alpha} \bigl(\abs{2\pi\xi}^{-(t-s)}\bigr) \leq C\abs{\xi}^{-\abs{\alpha}} \quad \text{for $\abs{\xi} \geq 1$}.
\end{equation}
Along with~\eqref{eq:decayRhat}, one can estimate the denominator of $r^{s}_t$ and find some $R\geq 1$ such that for all $\xi\in \R^n$ with $|\xi|\geq R$, 
\begin{align}\label{est_denominator}
 \abs{2\pi\xi}^{-(1-t)} +\widehat{R}^{t}_\d(\xi) \geq |2\pi\xi|^{-1} - c|\xi|^{-2} \geq C\abs{\xi}^{-1}. 
\end{align}
If one takes the $\alpha$th derivative of $r^{s}_t$ on $B(0, R)^c$, the quotient rule gives rise to a quotient whose denominator results from raising the denominator of $r^{s}_t$ to the power $2^{|\alpha|}$ and whose numerator is a product of $\widehat{R}^s_\d$, $\widehat{R}^t_\d$ and their derivatives with terms bounded independently of $s, t$. We therefore obtain in view of~\eqref{est_denominator}, and again~\eqref{eq:decayRhat}, that
\begin{equation}\label{eq:secondterm}
|\partial^{\alpha}r^{s}_t(\xi)| \leq C |\xi|^{-2^{|\alpha|}}\leq  C\abs{\xi}^{-\abs{\alpha}} \qquad \text{for $\abs{\xi} \geq R$}.
\end{equation}
The combination of \eqref{eq:simplifyfrac}, \eqref{eq:firstterm} and \eqref{eq:secondterm} then yields \eqref{eq:mihlinbounds} on $B(0, R)^c$. \smallskip

\textit{Step 2: Local bounds.} To show that \eqref{eq:mihlinbounds} holds for $\abs{\xi} \leq R$, we observe first that, as a consequence of Lemma~\ref{le:qnorm}\,$(ii)$ and the non-negativity of $\widehat{Q}_\delta^s$ (cf.~Remark~\ref{rem:Qdeltas}), there is a constant $c>0$ such that
\begin{align*}
\widehat{Q}^{s}_\d(\xi) \geq c
\end{align*}
for all $\xi\in \overline{B(0, R)}$ and all $s\in [0,1)$.
Moreover, for any $\beta \in \N_0^n$ with $\abs{\beta} \leq n/2+1$, the estimate
\[
\norm{(-2\pi i\, \cdot)^{\beta}Q^s_\d}_{L^1(\R^n)} \leq C\norm{Q^s_\d}_{L^1(\R^n)}\delta^{\abs{\beta}}\leq C\delta^{\abs{\beta}},
\]
where the last inequality follows in view of~Lemma~\ref{le:qnorm}\,$(i)$, implies
\[
\bigl|\partial^{\beta}\widehat{Q}^s_\d(\xi)\bigr|\leq C \delta^{\abs{\beta}} \leq C \qquad \text{for all $\xi \in \R^n$ and $s \in [0,1)$.}
\]
To conclude, we use again the quotient rule to obtain
\begin{align*}
\abs{\xi}^{\abs{\alpha}}\abslr{\partial^{\alpha}m^{s}_t(\xi)}=
\abs{\xi}^{\abs{\alpha}}\abslr{\partial^{\alpha}\left(\frac{\widehat{Q}^s_\d(\xi)}{\widehat{Q}^t_\d(\xi)}\right)} \leq \frac{R^{\abs{\alpha}}C}{c^{2^{\abs{\alpha}}}} = C \qquad \text{for $\abs{\xi} \leq R$}.
\end{align*}
\end{proof}
We now obtain the next corollary based on the previous lemma;
recall the definitions of $H^{1,p,\d}(\R^n)$ and $H^{1,p,\d}_0(\Omega)$ in~\eqref{eq:classicalspace1} and~\eqref{eq:classicalspace2}.
\begin{corollary}\label{cor:orderinequality}
Let $0 \leq s \leq t \leq 1$ and $p \in (1,\infty)$. If $u \in H^{t,p,\d}(\R^n)$, then $u \in \Hspd(\R^n)$ and there is a constant $C>0$ depending only on $n$, $\d$ and $p$ such that
\begin{equation}\label{eq:orderinequality}
\norm{D^s_\d u}_{L^p(\R^n;\Rn)} \leq C\norm{D^t_\d u}_{L^p(\R^n;\Rn)}.
\end{equation}
If $\Omega \subset\R^n$ is open and bounded and $u \in H^{t,p,\d}_0(\O)$, then $u \in H^{s,p,\d}_0(\O)$ with
\[
\norm{D^s_\d u}_{L^p(\O;\Rn)} \leq C\norm{D^t_\d u}_{L^p(\O;\Rn)}.
\]
\end{corollary}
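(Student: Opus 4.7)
The plan is to establish the estimate first for smooth compactly supported functions, then extend by density. For $\varphi \in C_c^{\infty}(\R^n)$ we have by Proposition~\ref{Prop: convolution with the classical gradient} that $D^s_\delta \varphi = Q^s_\delta \ast \nabla \varphi \in \Scal(\R^n;\R^n)$, and taking Fourier transforms gives $\widehat{D^s_\delta \varphi}(\xi) = 2\pi i\xi\, \widehat{Q}^s_\delta(\xi)\, \widehat{\varphi}(\xi)$. When $0 \leq s \leq t < 1$, this allows us to write
\[
\widehat{D^s_\delta \varphi}(\xi) = m^s_t(\xi)\, \widehat{D^t_\delta \varphi}(\xi),
\]
with $m^s_t$ as in \eqref{multiplier_mst}, so that the componentwise application of Lemma~\ref{lem:multiplier} yields
\[
\norm{D^s_\delta \varphi}_{L^p(\R^n;\R^n)} \leq C \norm{D^t_\delta \varphi}_{L^p(\R^n;\R^n)}
\]
with $C>0$ depending only on $n,p,\delta$. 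The boundary cases are handled separately: if $s=t$, the estimate is trivial, and if $t=1$, it follows from Young's inequality together with the uniform bound on $\norm{Q^s_\delta}_{L^1(\R^n)}$ from Lemma~\ref{le:qnorm}\,$(i)$, exactly as in~\eqref{eq:classicalbound}.

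To pass to general $u \in H^{t,p,\delta}(\R^n)$, I would invoke the density statement in Theorem~\ref{th:density} to pick a sequence $\{\varphi_j\}_{j\in\N} \subset C_c^{\infty}(\R^n)$ with $\varphi_j \to u$ in $H^{t,p,\delta}(\R^n)$ (for $t=1$ the corresponding density in $W^{1,p}(\R^n)$ is classical). Applying the smooth estimate to differences shows that $\{D^s_\delta \varphi_j\}_j$ is Cauchy in $L^p(\R^n;\R^n)$, so there is some $v \in L^p(\R^n;\R^n)$ with $D^s_\delta \varphi_j \to v$. Using the integration-by-parts formula of Lemma~\ref{th:Nl parts} together with the $L^p$-convergence of $\varphi_j$ and the continuity of $\Div^s_\delta$ on test functions, one identifies $v$ with the weak nonlocal gradient $D^s_\delta u$. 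Passing to the limit in $\norm{D^s_\delta \varphi_j}_{L^p} \leq C \norm{D^t_\delta \varphi_j}_{L^p}$ then yields~\eqref{eq:orderinequality} and shows $u \in H^{s,p,\delta}(\R^n)$.

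For the complementary-value statement, I would exploit the fact that for $\varphi \in C_c^{\infty}(\Omega_{-\delta})$ one has $\supp(D^s_\delta \varphi) \subset \Omega_{-\delta} + \overline{B(0,\delta)} \subset \overline{\Omega}$, so the $L^p$-norm over $\Omega$ coincides with the $L^p$-norm over $\R^n$. Given $u \in H^{t,p,\delta}_0(\Omega)$, choose a sequence $\{\varphi_j\} \subset C_c^{\infty}(\Omega_{-\delta})$ approximating $u$ in $H^{t,p,\delta}(\Omega)$ (resp.\ in $W^{1,p}(\Omega_\delta)$ when $t=1$); interpreting each $\varphi_j$ as a function on $\R^n$ via extension by zero and applying the estimate already proven on $\R^n$ to $\varphi_j - \varphi_k$, one gets that $\{D^s_\delta \varphi_j\}_j$ is Cauchy in $L^p(\Omega;\R^n)$. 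Combined with $\varphi_j \to u$ in $L^p(\Omega_\delta)$, this simultaneously shows that $u \in H^{s,p,\delta}_0(\Omega)$ (as a limit of the $\varphi_j$ in $H^{s,p,\delta}(\Omega)$) and that the stated inequality passes to the limit.

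The only nontrivial step is the multiplier estimate for smooth functions, but this is already supplied by Lemma~\ref{lem:multiplier}; beyond that, the main thing to verify carefully is that the density-plus-Cauchy argument correctly identifies the weak nonlocal gradient of $u$ at the limit level, and that in the boundary case $t=1$ the $W^{1,p}$-density is used in place of the $H^{t,p,\delta}$-density, with the uniform bound $\norm{Q^s_\delta}_{L^1(\R^n)} \leq C$ serving as the replacement for the multiplier estimate.
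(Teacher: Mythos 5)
Your proof is correct and follows essentially the same route as the paper: the multiplier estimate of Lemma~\ref{lem:multiplier}, combined with the Fourier-side identity from Proposition~\ref{Prop: convolution with the classical gradient}, yields the inequality for smooth compactly supported functions, the endpoint $t=1$ is handled separately via Young's inequality and the uniform $L^1$-bound on $Q^s_\d$, and density (Theorem~\ref{th:density}) extends the estimate to all of $H^{t,p,\d}(\R^n)$. The only cosmetic difference is in how the weak nonlocal gradient is identified at the limit: the paper works directly with the bounded $L^p$-extension $M^s_t$ of the multiplier operator, so that $D^s_\d u = M^s_t D^t_\d u$ holds automatically by continuity, and it handles the complementary-value statement simply by extending $u$ by zero to $\R^n$, whereas you establish Cauchyness of $\{D^s_\d\varphi_j\}_j$ and identify the limit via the integration-by-parts formula, then re-run a density argument on $\Omega$ --- both are valid.
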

\begin{proof}
The case $t=1$ is covered by~\eqref{eq:classicalbound}. For the other cases, we deduce from the previous lemma that the map 
\[
M^s_t: \Scal(\R^n;\R^n) \to L^p(\R^n;\R^n), \qquad v \mapsto  (m^{s}_t\widehat{v})^\vee =\left(\frac{\widehat{Q}^s_\d}{\widehat{Q}^t_\d}\widehat{v}\right)^{\vee}
\]
can be extended to a bounded linear operator on $L^p(\R^n;\R^n)$ with
\begin{equation}\label{eq:mihlin}
\norm{M^s_t v}_{L^p(\R^n;\R^n)} \leq C\norm{v}_{L^p(\R^n;\R^n)}
\end{equation}
for all $v\in L^p(\R^n;\R^n)$, where $C>0$ is a constant independent of $s, t$. For $\phi \in C_c^{\infty}(\R^n)$, we also observe using Proposition~\ref{Prop: convolution with the classical gradient} that
\begin{align*}
M^s_t D^t_\d \varphi = M^s_t(Q^t_\d * \nabla \varphi)=\left(\frac{\widehat{Q}^s_\d}{\widehat{Q}^t_\d}\widehat{Q}^t_\d \widehat{\nabla \varphi}\right)^{\vee} = \Bigl(\widehat{Q}^s_\d \widehat{\nabla \varphi}\Bigr)^{\vee}=D^s_\d \varphi.
\end{align*}

With $u \in H^{t,p,\d}(\R^n)$, one can take an approximating sequence $\{\varphi_j\}_{j\in \N} \subset C_c^{\infty}(\R^n)$ with $\varphi_j \to u$ in $H^{t,p,\d}(\R^n)$ and infer from the continuity of the operator $M^s_t$ that $D^s_\d \varphi_j = M^s_t D^t_\d \varphi_j \to M^s_t D^t_\d u$ in $L^p(\R^n)$. This shows that $u \in H^{s,p,\d}(\R^n)$ with $D^s_\d u = M^s_t D^t_\d u\in L^p(\R^n)$. The bound \eqref{eq:orderinequality} follows now from \eqref{eq:mihlin}. 

Finally, the statement for $u \in H^{t,p,\d}_0(\O)$ follows by extending $u$ to $\R^n$ by zero, noting that then the nonlocal gradient of $u$ is zero in $\Omega^c$.
\end{proof}
\begin{remark}
a) We note that this approach does not extend to $p=1$, since the Mihlin-H\"{o}rmander theorem is not valid in this case. Moreover, this approach does not apply to $u \in H^{t,p,\d}(\Omega)$ because it requires functions to be defined on all of $\R^n$ for the Fourier transform techniques. In fact, there is no obvious way of how to extend functions in $H^{t,p,\d}(\Omega)$, as they can be ill-behaved in the strip $\Omega_\d \setminus \O$. \smallskip

b) An inequality of the type \eqref{eq:orderinequality} does not hold for the fractional gradient, which can be seen from the homogeneity property. Indeed, for $u \in C_c^{\infty}(\R^n)$ and $0\leq s<t\leq 1$, we may define for $\lambda >0$ the function $u_{\lambda}:=\lambda^{n/p-t}u(\lambda\,\cdot)$. Then, we can calculate that for $x \in \R^n$
\[
D^t u_{\lambda}(x) = \lambda^{n/p} D^t u(\lambda x) \quad \text{and} \quad D^s u_{\lambda}(x) = \lambda^{n/p-(t-s)}D^s u(\lambda x).
\]
This gives $\norm{D^t u_{\lambda}}_{L^p(\R^n;\R^n)} = \norm{D^tu}_{L^p(\R^n;\R^n)}$, whereas $\norm{D^s u_{\lambda}}_{L^p(\R^n;\R^n)} = \lambda^{-(t-s)}\norm{D^su}_{L^p(\R^n;\R^n)}$. Letting $\lambda \to 0$ shows that \eqref{eq:orderinequality} cannot hold for the fractional gradient.
\end{remark}
As a consequence, we derive the following generalization of the convergence result Lemma~\ref{le:orderconvergencesmooth} to the nonlocal Sobolev setting.
\begin{theorem} \label{th: continuity on s of Dsdelta}
Let $p \in (1,\infty)$ and let $\{s_j\}_{j \in \N} \subset [0,1]$ be  a sequence converging to $s \in [0,1]$ with $\bar{s}:=\sup_{j \in \N} s_j$. Then, it holds for every $u \in H^{\bar{s},p,\d}(\R^n)$ that
\[
D^{s_j}_\d u \to D^s_\d u \ \ \text{in $L^p(\R^n;\Rn)$ as $j \to \infty$}.
\]
If $\Omega \subset\R^n$ is open and bounded and $u \in H^{\bar{s},p,\d}_0(\O)$, then
\[
D^{s_j}_\d u \to D^s_\d u \ \ \text{in $L^p(\O;\R^n)$ as $j \to \infty$}.
\]
\end{theorem}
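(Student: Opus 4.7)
The strategy is a standard three-epsilon argument combining the smooth case from Lemma~\ref{le:orderconvergencesmooth} with the uniform bound from Corollary~\ref{cor:orderinequality} and the density from Theorem~\ref{th:density}. Fix $u \in H^{\bar s,p,\d}(\R^n)$ and $\varepsilon>0$. By Theorem~\ref{th:density}, there exists $\varphi \in C_c^\infty(\R^n)$ with $\|u-\varphi\|_{H^{\bar s,p,\d}(\R^n)} < \varepsilon$. Since $s_j \le \bar s$ and $s \le \bar s$, Corollary~\ref{cor:orderinequality} gives a constant $C = C(n,\delta,p)$ such that, for every $j$,
\[
\|D^{s_j}_\d(u-\varphi)\|_{L^p(\R^n;\R^n)} + \|D^{s}_\d(u-\varphi)\|_{L^p(\R^n;\R^n)} \le 2C\|D^{\bar s}_\d(u-\varphi)\|_{L^p(\R^n;\R^n)} < 2C\varepsilon.
\]
I would then split
\[
\|D^{s_j}_\d u - D^s_\d u\|_{L^p(\R^n;\R^n)} \le \|D^{s_j}_\d(u-\varphi)\|_{L^p} + \|D^{s_j}_\d \varphi - D^s_\d \varphi\|_{L^p} + \|D^s_\d(\varphi-u)\|_{L^p},
\]
and control the middle term using Lemma~\ref{le:orderconvergencesmooth}, which yields uniform convergence $D^{s_j}_\d \varphi \to D^s_\d \varphi$ on $\R^n$. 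Since for every $r \in [0,1]$ one has $\supp(D^r_\d \varphi) \subset \supp(\varphi) + \overline{B(0,\delta)}$, which is a fixed compact set, uniform convergence upgrades to $L^p$ convergence, so the middle term tends to zero as $j\to\infty$. Letting $\varepsilon\to 0$ finishes the $\R^n$ statement.

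For the bounded-domain statement, the natural move is to reduce to the previous case by extension by zero. Given $u \in H^{\bar s,p,\d}_0(\O)$, take an approximating sequence $\{\varphi_k\}_{k \in \N} \subset C_c^\infty(\O_{-\d})$ with $\varphi_k \to u$ in $H^{\bar s,p,\d}(\O)$. Extending each $\varphi_k$ by zero gives a sequence in $C_c^\infty(\R^n)$ that is Cauchy in $H^{\bar s,p,\d}(\R^n)$, since the values and the nonlocal gradients vanish outside $\O$. Hence the trivial extension $\bar u$ lies in $H^{\bar s,p,\d}(\R^n)$, with $D^{\bar s}_\d \bar u = 0$ a.e.\ outside $\O$ and $D^{\bar s}_\d \bar u = D^{\bar s}_\d u$ on $\O$. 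Apply the first part to $\bar u$ and restrict to $\O$ to obtain the desired convergence in $L^p(\O;\R^n)$.

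\textbf{Expected obstacle.} All the heavy lifting has already been done: the uniform-in-$s$ multiplier estimate of Corollary~\ref{cor:orderinequality} is precisely what makes the three-epsilon argument work, and Lemma~\ref{le:orderconvergencesmooth} handles the smooth case. The only minor point requiring care is the limit $s = 1$, where $D^1_\d = \nabla$ and Proposition~\ref{Prop: convolution with the classical gradient} no longer applies directly; but this is already encoded into both Lemma~\ref{le:orderconvergencesmooth} (where the case $s=1$ is handled via \eqref{eq:mollifier}) and Corollary~\ref{cor:orderinequality} (which permits $t=1$ via Young's inequality). So no new ingredient is needed, and the argument is essentially a clean assembly of the preceding results.
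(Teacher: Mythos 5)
Your proof is correct and follows essentially the same route as the paper's: approximate by a smooth compactly supported function, control the approximation error uniformly in the fractional order via Corollary~\ref{cor:orderinequality}, handle the smooth term via Lemma~\ref{le:orderconvergencesmooth} (together with the fixed compact support to upgrade uniform to $L^p$ convergence), and treat the bounded-domain case by extension by zero. The only difference is that you spell out the support observation and the extension step in more detail than the paper does.
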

\begin{proof}
Take $\epsilon >0$ and $\phi_{\epsilon} \in C_c^{\infty}(\R^n)$ such that $\norm{u-\phi_{\epsilon}}_{H^{\bar{s},p,\d}(\R^n)} \leq \epsilon$, cf.~Theorem~\ref{th:density}. Then, due to Corollary~\ref{cor:orderinequality}, 
\[
\norm{D^{s_j}_\d(u-\phi_{\epsilon})}_{L^p(\R^n;\R^n)} \leq C\epsilon \quad \text{for all $j \in \N$} \quad \text{and} \quad \norm{D^{s}_\d(u-\phi_{\epsilon})}_{L^p(\R^n;\R^n)} \leq C\epsilon.
\]
If we choose $j$ large enough so that $\norm{D^{s}_\d \phi_{\epsilon} - D^{s_j}_\d \phi_{\epsilon}}_{L^p(\R^n;\R^n)} \leq \epsilon$, which is possible by Lemma~\ref{le:orderconvergencesmooth}, we obtain
\begin{align*}
\norm{D^{s}_\d u - D^{s_j}_\d u}_{L^p(\R^n;\R^n)}&\leq \norm{D^{s}_\d(u-\phi_{\epsilon})}_{L^p(\R^n;\R^n)}+\norm{D^{s}_\d \phi_{\epsilon} - D^{s_j}_\d \phi_{\epsilon}}_{L^p(\R^n;\R^n)}\\
&\qquad+\norm{D^{s_j}_\d(u-\phi_{\epsilon})}_{L^p(\R^n;\R^n)}\\
&\leq (2C +1)\epsilon,
\end{align*}
and letting $\epsilon \to 0$ yields the desired convergence. The case $u \in H^{\bar{s},p,\d}_0(\Omega)$ follows again via extension.
\end{proof}
\begin{remark}
		For the particular case of localization to the classical gradient, i.e., when $s_j \to 1$ as $j \to \infty$,  the convergence $D^{s_j}_\d u \to \nabla u$ in $L^p(\O;\Rn)$  with $u \in W^{1,p}(\O_{ \d})$ holds without imposing complementary values. Indeed, by Proposition \ref{Prop: convolution with the classical gradient} and Lemma \ref{le:qnorm}, we can bound $\|D^s_\d u\|_{L^p(\O;\R^n)}\leq C \|\nabla u\|_{L^p(\O_{ \d};\R^n)}$ uniformly in $s$, and then, a similar argument to that of the proof of Theorem \ref{th: continuity on s of Dsdelta} applies.
\end{remark}

As another consequence of \eqref{eq:orderinequality}, we establish a nonlocal Poincar\'{e} inequality with a constant independent of the fractional order $s$. The proof builds on two pillars, namely the  estimate of Corollary~\ref{cor:orderinequality}, which says that it is enough to prove the inequality for $s=0$, and in order to achieve the latter, a version of the fundamental theorem of calculus for the case $s=0$ from Proposition~\ref{prop:ftoczero}.

\begin{theorem}[Nonlocal Poincar\'{e} inequality with uniform constants in $s$]\label{th:poincareindep}
Let $s \in [0,1]$, $p \in (1,\infty)$ and $\Omega \subset \R^n$ be open and bounded. Then, there exists a constant $C>0$ depending only on $\Omega$, $\d$ and $p$ such that for all $u\in H^{s,p,\d}_0(\O)$,
\begin{align}\label{Poincare_uniform}
\norm{u}_{L^p(\O)} \leq {C}\norm{D^s_\d u}_{L^p(\O;\R^n)}.
\end{align}
\end{theorem}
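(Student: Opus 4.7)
The plan is to reduce the inequality to the special case $s=0$ and then establish the $s=0$ Poincar\'{e} inequality directly, using the nonlocal fundamental theorem of calculus in Proposition~\ref{prop:ftoczero} together with the $L^p$-boundedness of the Riesz transform. The reduction rests entirely on Corollary~\ref{cor:orderinequality}, which absorbs the $s$-dependence into a single uniform constant: for any $s\in (0,1]$ and $u\in H^{s,p,\delta}_0(\Omega)$, applying the corollary with its parameters $s$ and $t$ replaced by $0$ and $s$ yields $u\in H^{0,p,\delta}_0(\Omega)$ together with
\[
\|D^0_\delta u\|_{L^p(\Omega;\R^n)} \leq C_1\|D^s_\delta u\|_{L^p(\Omega;\R^n)},
\]
for a constant $C_1=C_1(\Omega,\delta,p)$ independent of $s$. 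Hence, once \eqref{Poincare_uniform} is shown at $s=0$ with some constant $C_0=C_0(\Omega,\delta,p)$, the full statement follows with $C=C_0C_1$.

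\textbf{The case $s=0$.} By the density in~\eqref{Hspd0} and the continuity of $D^0_\delta$ on $L^p$ from Remark~\ref{rem:singular}, it is enough to prove the estimate for $\varphi\in C_c^\infty(\Omega_{-\delta})\subset \Scal(\R^n)$. Proposition~\ref{prop:ftoczero} then delivers the representation
\[
\varphi = -RD^0_\delta \varphi + W_\delta\ast D^0_\delta \varphi,
\]
with $R$ the Riesz transform and $W_\delta\in L^\infty(\R^n;\R^n)$. A direct inspection of~\eqref{eq: definition of nonlocal gradient} shows that the integrand vanishes whenever $|x-y|\geq \delta$, so $\supp(D^0_\delta \varphi)\subset \Omega_{-\delta}+\overline{B(0,\delta)}\subset \overline{\Omega}$. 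The componentwise $L^p$-boundedness of the Riesz transform on $(1,\infty)$ then gives
\[
\|RD^0_\delta \varphi\|_{L^p(\Omega)}\leq \|RD^0_\delta \varphi\|_{L^p(\R^n)}\leq C\|D^0_\delta \varphi\|_{L^p(\Omega;\R^n)},
\]
and since $D^0_\delta \varphi$ is supported in the bounded set $\Omega$, H\"{o}lder's inequality produces the pointwise bound
\[
|(W_\delta\ast D^0_\delta \varphi)(x)|\leq \|W_\delta\|_{L^\infty(\R^n;\R^n)}|\Omega|^{1/p'}\|D^0_\delta \varphi\|_{L^p(\Omega;\R^n)},
\]
whose $L^p(\Omega)$-norm is controlled by the same quantity times $|\Omega|^{1/p}$. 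Adding the two contributions and passing to the limit by density yields \eqref{Poincare_uniform} at $s=0$.

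\textbf{Main obstacle.} The delicate step is the treatment of $W_\delta\ast D^0_\delta \varphi$: since $W_\delta$ is only bounded and not integrable, Young's convolution inequality is not available. The resolution is to exploit that $D^0_\delta \varphi$ is compactly supported inside the bounded set $\Omega$, which allows pairing the $L^\infty$-bound on $W_\delta$ with H\"{o}lder's inequality on $\Omega$. The fact that the final constant is uniform in $s$ relies crucially on the uniform Mihlin--H\"{o}rmander bound encoded in Corollary~\ref{cor:orderinequality}, which is the main technical ingredient built up earlier in this section.
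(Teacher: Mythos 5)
Your proposal is correct and follows the same route as the paper's proof: reduce to $s=0$ via Corollary~\ref{cor:orderinequality}, pass to $\varphi\in C_c^\infty(\Omega_{-\delta})$ by density, and then apply Proposition~\ref{prop:ftoczero} together with the $L^p$-boundedness of the Riesz transform and the compact support of $D^0_\delta\varphi$ to control the $W_\delta$-convolution term. Your write-up simply fills in the H\"older step that the paper leaves implicit.
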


\begin{proof}
Given Corollary~\ref{cor:orderinequality}, it suffices to prove~\eqref{Poincare_uniform} for $s=0$. Moreover, we may assume by density (cf.~\eqref{Hspd0}) that $u \in C_c^{\infty}(\O_{-\d})$.   Proposition~\ref{prop:ftoczero} together with the fact that $\supp(D^0_\d u) \subset \O$ then implies 
\[
\norm{u}_{L^p(\O)} \leq \norm{RD^0_\d u}_{L^p(\R^n)} + \abs{\O} \norm{W_\delta}_{L^{\infty}(\R^n;\R^n)}\norm{D^0_\d  u}_{L^p(\O;\R^n)} \leq C\norm{D^0_\d u}_{L^p(\O;\R^n)},
\]
where the second inequality uses the $L^p$-boundedness of the Riesz transform. 
\end{proof}

Finally, we present a compactness statement for sequences that are bounded in nonlocal spaces of different order. It will be used later in the proof of the $\Gamma$-convergence result in Section~\ref{sec:Gamma}.

\begin{lemma}[Weak compactness of sequences in varying order nonlocal spaces]\label{le:ordercompactness}
Let $p \in (1,\infty)$ and $\O \subset \R^n$ be open and bounded with $\O_{-\d}$ a Lipschitz domain. Consider any sequence $\{s_j\}_{j \in \N} \subset [0,1]$ converging to $s \in [0,1]$ and $u_j \in H^{s_j,p,\d}_0(\O)$ for $j \in \N$ with
\[
\sup_{j \in \N}\, \norm{D^{s_j}_\d u_j}_{L^p(\O;\R^n)} < \infty.
\]
Then, up to a non-relabeled subsequence, $u_j \weakto u$ in $L^p(\O_\d)$ with $u \in H^{s,p,\d}_0(\O)$ and as $j \to \infty$,
\[
D^{s_j}_\d u_j \weakto D^s_\d u \ \ \text{in $L^p(\O;\R^n)$} \quad \text{and} \quad D^{s_j}_\d u_j(x) \to D^{s}_\d u(x) \ \ \text{for a.e.~$x \in \O \setminus \O_{-\d}$.}
\] 
\end{lemma}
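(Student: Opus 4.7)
The plan is to proceed in four steps: extract weakly convergent subsequences via a uniform Poincar\'e bound, identify the weak limit of the nonlocal gradients by a duality argument, verify that the limit retains zero complementary values, and finally upgrade to pointwise convergence in the collar.

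First, I invoke Theorem~\ref{th:poincareindep} to obtain $\|u_j\|_{L^p(\Omega_\delta)} \leq C\|D^{s_j}_\delta u_j\|_{L^p(\Omega;\R^n)}$ with $C$ independent of $j$, so $\{u_j\}$ is uniformly bounded in $L^p(\Omega_\delta)$. By reflexivity, after passing to a subsequence I have $u_j \rightharpoonup u$ in $L^p(\Omega_\delta)$ and $D^{s_j}_\delta u_j \rightharpoonup V$ in $L^p(\Omega;\R^n)$ for some $V$. To identify $V = D^s_\delta u$, I test the integration-by-parts identity
\[
\int_\Omega D^{s_j}_\delta u_j \cdot \psi \, dx = -\int_{\Omega_\delta} u_j \Div^{s_j}_\delta \psi \, dx
\]
against arbitrary $\psi \in C_c^\infty(\Omega;\R^n)$ and let $j \to \infty$. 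The left side is immediate; on the right I split off $-\int_{\Omega_\delta} u_j \Div^s_\delta \psi \, dx$, which converges by weak convergence against the fixed $L^{p'}(\Omega_\delta)$ function $\Div^s_\delta \psi$, and control the remainder by $\|u_j\|_{L^p(\Omega_\delta)} \|\Div^{s_j}_\delta \psi - \Div^s_\delta \psi\|_{L^{p'}(\Omega_\delta)}$, which vanishes thanks to the uniform convergence $\Div^{s_j}_\delta \psi \to \Div^s_\delta \psi$ on $\R^n$ supplied by Lemma~\ref{le:orderconvergencesmooth} together with the boundedness of $\Omega_\delta$. This identifies $V = D^s_\delta u$ and places $u$ in $H^{s,p,\delta}(\Omega)$.

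Since Proposition~\ref{prop:densitycomplement} (together with the characterization recorded just below \eqref{eq:classicalspace2} when $s_j = 1$) says each $u_j$ vanishes a.e.\ in $\Omega_\delta \setminus \Omega_{-\delta}$, and weak $L^p$-convergence preserves a.e.\ vanishing on a fixed set (test against $\phi \in L^{p'}$ supported there), the limit $u$ inherits the same property and hence lies in $H^{s,p,\delta}_0(\Omega)$. For the pointwise assertion, fix $x \in \Omega \setminus \overline{\Omega_{-\delta}}$ --- a set of full measure in $\Omega \setminus \Omega_{-\delta}$ since $\Omega_{-\delta}$ is Lipschitz --- and set $d := d(x, \overline{\Omega_{-\delta}}) > 0$. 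Because $u_j$ (extended by zero) is supported in $\overline{\Omega_{-\delta}}$, the nonlocal gradient at $x$ takes the form
\[
D^{s_j}_\delta u_j(x) = \int_{\R^n} K^x_j(y)\, u_j(y) \, dy, \qquad K^x_j(y) := -c_{n,s_j} \frac{(x-y)\, w_\delta(x-y)}{|x-y|^{n+s_j+1}}\,\mathbbm{1}_{\overline{\Omega_{-\delta}}}(y),
\]
whose kernel is supported in the bounded annular region $\{d \leq |x-y| \leq \delta\}$, uniformly bounded in $j$, and converges pointwise to the analogous kernel $K^x_\infty$ obtained by replacing $s_j$ with $s$ (understood as $K^x_\infty \equiv 0$ when $s = 1$, since $c_{n,s_j} \to 0$). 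Dominated convergence yields $K^x_j \to K^x_\infty$ in $L^{p'}$, and the standard pairing of weak and strong convergence gives $D^{s_j}_\delta u_j(x) \to \int K^x_\infty u \, dy = D^s_\delta u(x)$; when $s = 1$, this limit is $0 = \nabla u(x)$, consistent with $u$ vanishing on the open neighborhood $\Omega \setminus \overline{\Omega_{-\delta}}$ of $x$.

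The main technical obstacle lies in the last step, where the nonlocal kernel carries a non-integrable singularity at $y = x$; this is neutralized precisely by the complementary-value condition, which keeps $\supp(u_j)$ at positive distance $d$ from every such $x$, so only the bounded, uniformly integrable portion of the kernel is seen.
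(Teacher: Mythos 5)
Your proof is correct and follows essentially the same route as the paper's: a uniform Poincar\'e bound to extract weak limits, a duality argument with the uniform convergence of $\Div^{s_j}_\delta\psi$ to identify the nonlocal gradient of the limit, preservation of the complementary-value condition under weak convergence, and the kernel representation $D^{s_j}_\delta u_j(x) = (d^{s_j}_\delta * u_j)(x)$ in the collar combined with the positive-distance observation to obtain pointwise convergence (handling $s=1$ via $c_{n,s_j}\to 0$). The only cosmetic difference is that you pass to strong $L^{p'}$-convergence of the truncated kernels via dominated convergence, whereas the paper cites uniform convergence of $d^{s_j}_\delta$ away from the origin; on the bounded set in question these are equivalent.
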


\begin{proof}
In view of the Poincar\'{e} inequality of Theorem~\ref{th:poincareindep}, we observe that
\[
\sup_{j \in \N} \,\norm{u_j}_{L^p(\O)} \leq C \sup_{j \in \N}\, \norm{D^{s_j}_\d u_j}_{L^p(\O;\R^n)} < \infty.
\]
Therefore, we can extract a subsequence of $\{u_j\}_{j\in \N}$ (non-relabeled) and find $u \in L^p(\O_\d)$ and $V \in L^p(\O;\R^n)$ such that
\[
u_j \weakto u \ \ \text{in $L^p(\O_\d)$} \quad \text{and} \quad D^{s_j}_\d u_j \weakto V \ \ \text{in $L^p(\O;\R^n)$} 
\]
as $j \to \infty$. Note that $u =0$ in $\O_\d \setminus \O_{-\d}$, since the same holds for the functions $u_j$. To show that $u \in H^{s,p,\d}_0(\O)$ and $V=D^s_\d u$,  take $\phi \in C_c^{\infty}(\O;\R^n)$ and observe that
\begin{align*}
\int_{\O}V \cdot \phi\,dx &= \lim_{j \to \infty} \int_{\O} D^{s_j}_\d u_j \cdot \phi\,dx =-\lim_{j\to \infty} \int_{\O_\d}u_j \Div^{s_j}_\d \phi\,dx =-\int_{\O_\d} u \Div^s_\d \phi\,dx,
\end{align*}
where the last equality results from the weak convergence $u_j \weakto u$ in $L^p(\O_\d)$ and the uniform convergence $\Div^{s_j}_\d \phi \to \Div^s_\d \phi$ by Lemma~\ref{le:orderconvergencesmooth}. Hence, $u \in H^{s,p,\d}(\O)$ with $D^s_\d u = V$, and Proposition~\ref{prop:densitycomplement} implies  $u \in H^{s,p,\d}_0(\O)$, since  $u=0$ a.e.~in $\O_{\d} \setminus \O_{-\d}$ and $\O_{-\d}$ is Lipschitz.

It remains to prove the pointwise convergence of the nonlocal gradients outside of $\Omega_{-\delta}$.  To this end, we observe in view of Remark~\ref{rem:nonlocalgradprop} that for any $t \in [0,1]$ and $v \in H^{t,p,\d}_0(\O)$, 
\begin{equation}\label{eq:gradstrip}
D^t_\d v (x) = \begin{cases}
(d^t_\d * v)(x) &\text{if $t \in [0,1)$,}\\
0 &\text{if $t=1$},
\end{cases}
\end{equation}
for a.e.~$x \in \O \setminus \O_{-\d}$;  note that $\abs{\partial \Omega_{-\d}}=0$, so that this set may be ignored. If $s \not =1$, it holds for any $\eps >0$ that $d_\delta^{s_j} \to d_\delta^{s}$ uniformly on $B_{\epsilon}(0)^c$ as $j \to \infty$. Consequently,
\[
\lim_{j \to \infty} D^{s_j}_\d u_j(x)=\lim_{j \to \infty}\int_{\O_{-\d}} u_j(y) d_\delta^{s_j}(x- y) \,dy=\int_{\O_{-\d}} u(y) d_\delta^s(x- y)\,dy = D^s_\d u(x)
\]
for a.e.~$x \in \O \setminus \O_{-\d}$. In the case $s=1$, we have $d_\delta^{s_j} \to 0$ uniformly on $B_{\epsilon}(0)^c$ as $j\to \infty$ due to the convergence $c_{n,s_j} \to 0$. The same argument then yields the desired pointwise convergence in light of \eqref{eq:gradstrip}.
\end{proof}

\section{Weak lower semicontinuity and existence theory}\label{section:wls}
This section is devoted to characterizing the weak lower semicontinuity of integral functionals depending on the nonlocal gradient, that is, functionals of the form
\begin{equation}\label{eq:functional}
\Fcal(u) = \int_{\Omega} f(x,u(x),D^s_\d u(x))\,dx \quad \text{for $u \in H^{s,p,\d}_g(\O; \R^m)$,}
\end{equation}
where $s \in (0,1)$, $p \in (1,\infty)$, $\O \subset \R^n$ is open and bounded, $g\in H^{s, p, \delta}(\Omega;\R^m)$, and $f:\O \times \R^m \times \Rmn\to \R$ is a suitable integrand with $p$-growth. Using the connection between the nonlocal gradient and the classical gradient from Theorem~\ref{prop:connection}, we can employ a translation procedure along the lines of~\cite{KrS22} to conclude that the weak lower semicontinuity of $\F$ is equivalent to the quasiconvexity of $f$ in its third argument. In fact, the quasiconvexity is only required in $\O_{-\d}$, which is due to the strong convergence of the nonlocal gradient in $\O_\d \setminus \O_{-\d}$ from Lemma~\ref{le:strongoutside}.

\begin{theorem}[Characterization of weak lower semicontinuity]\label{theo:characterization}
Let $s \in (0,1)$, $p \in (1,\infty)$, $\Omega \subset \R^n$ be open and bounded with $|\partial \O_{-\d}|=0$ and $g \in H^{s,p,\d}(\O;\R^m)$. Further, let $f:\O \times \R^m \times \Rmn \to \R$ be a Carath\'{e}odory function satisfying 
\[
-C(1+\abs{z}^p+\abs{A}^q) \leq f(x,z,A) \leq C(1+\abs{z}^p+\abs{A}^p)
\]
for a.e.~$x \in \O$ and all $(z,A) \in \R^m\times \Rmn$ with $C>0$ and $q \in [1,p)$.  

Then, $\Fcal$ from \eqref{eq:functional} is weakly lower semicontinuous on $H^{s,p,\d}_g(\O;\R^m)$ if and only if 
\begin{align}\label{qc}
f(x,z,\cdot)\quad \text{ is quasiconvex for a.e.~$x \in \O_{-\d}$ and all $z \in \R^m$,}
\end{align}
i.e., it holds for a.e.~$x\in \Omega_{-\delta}$ and all $z\in \R^m$ with $Y=(0,1)^n$ that
\begin{align*}
f(x, z, A)\leq\int_{Y}  f(x, z, A +\nabla \varphi(y)) \, dy  \quad \text{for all $\varphi\in W_0^{1, \infty}(Y;\R^m)$ and $A\in \R^{m\times n}$.}
\end{align*}
\end{theorem}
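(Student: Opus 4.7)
The plan is to reduce both implications to the classical characterization of weak lower semicontinuity by quasiconvexity (in the style of Acerbi--Fusco and Morrey) via the isomorphism $\Qcal^s_\d\colon H^{s,p,\d}(\R^n)\to W^{1,p}(\R^n)$ with inverse $\Pcal^s_\d$ from Theorem~\ref{prop:connection}, which intertwines the nonlocal and classical gradient. The two directions then split naturally: for sufficiency I decompose $\O$ into the bulk $\O_{-\d}$ and the collar $\O\setminus\O_{-\d}$, where the boundary-layer effect of Lemma~\ref{le:strongoutside} waives the need for quasiconvexity in the collar; for necessity I use $\Pcal^s_\d$ to manufacture a test sequence whose nonlocal gradient carries a prescribed periodic oscillation around a Lebesgue point.

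For sufficiency, assume $f(x,z,\cdot)$ is quasiconvex for a.e.~$x\in\O_{-\d}$ and all $z\in\R^m$, and let $u_j\weakto u$ in $H^{s,p,\d}_g(\O;\R^m)$. Set $w_j=u_j-g\in H^{s,p,\d}_0(\O;\R^m)$, extend it by zero to $\R^n$, and put $V_j:=\Qcal^s_\d w_j\in W^{1,p}(\R^n;\R^m)$, so that $V_j\weakto V:=\Qcal^s_\d w$ in $W^{1,p}(\R^n;\R^m)$ and $\nabla V_j=D^s_\d u_j-D^s_\d g$ on $\O$ by Theorem~\ref{prop:connection}(i). The compact embedding underlying Lemma~\ref{le:strongoutside} also gives $u_j\to u$ in $L^p(\O_\d;\R^m)$. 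Given $\eps>0$, choose $O$ with $\overline{\O_{-\d}}\subset O\subset\O$ and $|O\setminus\O_{-\d}|<\eps$ (possible because $|\partial\O_{-\d}|=0$); Lemma~\ref{le:strongoutside} then yields $D^s_\d u_j\to D^s_\d u$ in $L^p(\O\setminus O;\R^{m\times n})$, whence the contribution on $\O\setminus O$ passes to the limit by Vitali's theorem and the $p$-growth of $f$, and the contribution on $O\setminus\O_{-\d}$ is bounded by $C\eps$ via the uniform $L^p$-bounds. On $\O_{-\d}$, I first replace $u_j$ by $u$ in the $z$-slot using the strong $L^p$-convergence (the error is controlled by uniform continuity on bounded sets combined with the $p$-growth), and then apply the classical lower semicontinuity theorem for quasiconvex integrands to the shifted integrand $(x,z,B)\mapsto f(x,z,B+D^s_\d g(x))$ along $V_j\weakto V$. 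Sending $\eps\to0$ gives $\liminf_j\Fcal(u_j)\geq\Fcal(u)$.

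For necessity, fix $A\in\R^{m\times n}$, $z_0\in\R^m$, $\varphi\in W^{1,\infty}_0(Y;\R^m)$, and a point $x_0\in\O_{-\d}$ that is a Lebesgue point of the relevant integrand averages; extend $\varphi$ $Y$-periodically to $\tilde\varphi$ on $\R^n$. Pick $r>0$ with $B(x_0,2r)\Subset\O_{-\d}$ and a cutoff $\chi\in C_c^{\infty}(B(x_0,2r))$ with $\chi\equiv1$ on $B(x_0,r)$; let $v_\infty\in W^{1,p}(\R^n;\R^m)$ be a compactly supported function equal to $x\mapsto Ax$ on $B(x_0,2r)$, and set $v_j:=v_\infty+\chi(\,\cdot\,)j^{-1}\tilde\varphi(j\,\cdot\,)$. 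Then $v_j\weakto v_\infty$ in $W^{1,p}(\R^n;\R^m)$ with $\nabla v_j=A+\nabla\tilde\varphi(j\,\cdot\,)$ on $B(x_0,r)$. Define $u_j:=\Pcal^s_\d v_j\in H^{s,p,\d}(\R^n;\R^m)$, so $D^s_\d u_j=\nabla v_j$ on $\R^n$ by Theorem~\ref{prop:connection}(ii), and Lemma~\ref{lem:compactnessV} provides a subsequence with $u_j\to u_\infty:=\Pcal^s_\d v_\infty$ in $L^p(\O_\d;\R^m)$. To enforce the complementary value $g$, I cut off $u_j-u_\infty$ by $\eta\in C_c^{\infty}(\O_{-\d})$ with $\eta\equiv1$ on $B(x_0,2r)$ via the nonlocal Leibniz rule of Lemma~\ref{le:leibniz}: the commutator $K_\eta(u_j-u_\infty)$ vanishes on $B(x_0,r)$ because $\eta$ is constant there, so the adjusted sequence $\bar u_j:=\eta(u_j-u_\infty)+u_\infty$, modified near the collar to match $g$, lies in $H^{s,p,\d}_g(\O;\R^m)$, converges weakly, and retains $D^s_\d\bar u_j=A+\nabla\tilde\varphi(j\,\cdot\,)$ on $B(x_0,r)$. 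Applying the wlsc hypothesis, the standard Riemann--Lebesgue convergence for periodic oscillations, and continuity in the $z$-slot yields
\[
\int_{B(x_0,r)}f(x,u_\infty(x),A)\,dx\leq\int_{B(x_0,r)}\int_Y f(x,u_\infty(x),A+\nabla\varphi(y))\,dy\,dx;
\]
dividing by $|B(x_0,r)|$ and shrinking $r\to0$ at the Lebesgue point delivers the pointwise quasiconvexity inequality for $f(x_0,u_\infty(x_0),\cdot)$, and by varying $v_\infty$ (which freely controls $u_\infty(x_0)$) the inequality extends to every $z_0\in\R^m$.

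The main technical obstacle is preserving the prescribed nonlocal gradient $A+\nabla\tilde\varphi(j\,\cdot\,)$ inside $B(x_0,r)$ after imposing the complementary value $g$ outside $\O_{-\d}$: because $\Pcal^s_\d$ is nonlocal, $u_j-u_\infty$ spreads well beyond the support of the oscillation, and a naive cutoff would destroy the local structure of $D^s_\d u_j$. Lemma~\ref{le:leibniz} provides exactly the right tool, since its commutator $K_\eta$ vanishes wherever $\eta$ is constant, so choosing $\eta\equiv1$ on the test region preserves the prescribed gradient there. A secondary issue in the sufficiency argument is the need to simultaneously pass to the limit in a weakly convergent $\nabla V_j$ and a strongly convergent $u_j$ in the two slots of $f$, which is handled by the standard uniform continuity estimate on bounded subsets of $\R^m$ together with the $p$-growth.
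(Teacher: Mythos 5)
Your sufficiency argument follows the same outline as the paper's: split the domain into $\Omega_{-\delta}$ and the collar, transport the bulk term to $W^{1,p}$ via $\Qcal^s_\delta$ and apply the classical lower semicontinuity theorem, and handle the collar with the strong gradient convergence from Lemma~\ref{le:strongoutside}. The added step of subtracting $g$ and working with the shifted integrand is unnecessary but harmless; the paper applies $\Qcal^s_\delta$ directly on $H^{s,p,\delta}(\Omega)$.

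Your necessity argument, however, contains a genuine error. You claim that the commutator $K_\eta(u_j-u_\infty)$ in the Leibniz rule vanishes on $B(x_0,r)$ "because $\eta$ is constant there." This is false: $K_\eta(u)(x)$ integrates $u(y)\bigl(\eta(x)-\eta(y)\bigr)$ over the $\delta$-ball $B(x,\delta)$, so it vanishes at $x$ only when $\eta$ is constant on the \emph{entire} ball $B(x,\delta)$, not merely at $x$. Since you shrink $r\to 0$ while $\delta$ stays fixed, for small $r$ the ball $B(x,\delta)$ with $x\in B(x_0,r)$ always reaches outside $B(x_0,2r)$, where $\eta<1$, and the commutator does not vanish. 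The paper's proof avoids this by only asserting that $K_\chi(\Pcal^s_\delta\phi^\rho_j)\to 0$ strongly in $L^p$ (a consequence of the boundedness of $K_\chi$ and $\Pcal^s_\delta\phi^\rho_j\to 0$ in $L^p$) and then absorbing this error with the freezing lemma \cite[Lemma~4.10]{KrS22}; your construction can be salvaged the same way, but the vanishing claim as stated is incorrect.

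There are two further gaps in your necessity argument. First, "modified near the collar to match $g$" glosses over the fact that $u_\infty=\Pcal^s_\delta v_\infty$ does not satisfy the complementary-value condition; the paper sidesteps this entirely by choosing a reference function $\varphi_0\in C_c^\infty(\Omega_{-\delta};\R^m)$ via Lemma~\ref{le:construction}, which automatically lies in $H^{s,p,\delta}_0(\Omega;\R^m)$. Second, the claim that varying $v_\infty$ "freely controls $u_\infty(x_0)$" needs justification: $v_\infty$ is constrained to equal $x\mapsto Ax$ on $B(x_0,2r)$, and it is not clear that modifications outside this ball let $(\Pcal^s_\delta v_\infty)(x_0)$ sweep out all of $\R^m$. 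Lemma~\ref{le:construction} gives exactly the needed smooth test function with prescribed $\varphi_0(x_0)=z_0$ and $D^s_\delta\varphi_0(x_0)=A_0$ and renders this delicate control unnecessary.
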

\begin{proof}
 The proof follows the lines of \cite[Theorem~4.1 and~4.5]{KrS22}, we detail the differences for the reader's convenience. \smallskip

\textit{Step 1: Sufficiency.} Assuming~\eqref{qc}, let $\{u_j\}_{j \in \N} \subset \Hspdgm$ be a sequence that converges weakly to $u$ in $\Hspdgm$. We divide the proof by splitting the integral functional $\Fcal$  and considering separately the integral contributions over $\Omega_{-\delta}$ and $\Omega\setminus \Omega_{-\delta}$. 

Since $u_j \to u$ in $L^p(\O_{\d};\R^m)$ by \cite[Theorem~7.3]{BeCuMC22} and $\mathcal{Q}^s_\d u_j \weakto \mathcal{Q}^s_\d u$ in $W^{1,p}(\O;\R^m)$ by Theorem~\ref{prop:connection}\,$(i)$, we conclude
\begin{equation}\label{eq:insideineq}
\begin{split}
\int_{\O_{-\d}}f(x,u,D^s_\d u)\,dx &= \int_{\O_{-\d}} f(x,u,\nabla \mathcal{Q}^s_\d u)\,dx\\
&\leq \liminf_{j \to \infty} \int_{\O_{-\d}} f(x,u_j,\nabla \mathcal{Q}^s_\d u_j)\,dx\\
& = \liminf_{j \to \infty} \int_{\O_{-\d}} f(x,u_j,D^s_\d u_j)\,dx,
\end{split}
\end{equation}
where the inequality is due to the quasiconvexity and $p$-growth of $f$, with the exact argument of \cite[Theorem~4.1]{KrS22} involving Young measures. Note that this requires the negative part of the sequence $\{f(\cdot ,u_j,\nabla \Qcal_\delta^s u_j)\}_{j \in \N}$ to be equi-integrable, which is guaranteed by the lower bound on $f$.

Secondly, for the integral on $\O \setminus \O_{-\d}$, we invoke from Lemma~\ref{le:strongoutside} the convergence $$D^s_\d u_j \to D^s_\d u \in L^p(\O \setminus O;\Rmn)$$ for any  $O\Supset\O_{-\d}$ . Hence, a well-known strong lower semicontinuity result (e.g.,~\cite[Theorem~6.49]{FoL07}) yields
\[
\int_{\O \setminus O} f(x,u,D^s_\d u)\,dx \leq \liminf_{j \to \infty} \int_{\O \setminus O} f(x,u_j,D^s_\d u_j)\, dx.
\]
Letting $O \downarrow \O_{-\d}$ implies, using once again the equi-integrability of the negative part $\{f(\cdot ,u_j,D^s_\d u_j)\}_{j \in \N}$ and the assumption $|\partial \O_{-\d}| =0$, that
\begin{equation}\label{eq:outsideineq}
\int_{\O \setminus \O_{-\d}} f(x,u,D^s_\d u)\,dx \leq \liminf_{j \to \infty} \int_{\O \setminus \O_{-\d}} f(x,u_j,D^s_\d u_j)\, dx.
\end{equation}
The sufficiency now follows from adding \eqref{eq:insideineq} and \eqref{eq:outsideineq}. \smallskip

\textit{Step 2: Necessity}. Analogously to the proof of \cite[Theorem~4.5]{KrS22}, we may assume without loss of generality that $g = 0$. In order to prove the stated quasiconvexity of $f$, let us fix $(x_0,z_0,A_0) \in \O_{-\d} \times \R^m \times \Rmn$. Using Lemma~\ref{le:construction}, we may select a $\varphi_0 \in C_c^{\infty}(\O_{-\d};\R^m)$ such that
\begin{equation}\label{eq:point}
\varphi_0(x_0) = z_0 \qquad \text{and} \qquad D^s_\d \varphi_0 (x_0) = A_0.
\end{equation}

Consider any $\phi \in W^{1,\infty}_0(Y;\R^m)$ and assume that $x_0 + Y \Subset \O_{-\d}$; the latter can be done without loss of generality in light of the scaling and translation invariances related to the definition of quasiconvexity, see e.g.,~\cite[Proposition~5.11]{Dac08}.  If we fix $\rho \in (0,1)$ and periodically extend $\phi$ to $\R^n$, we can define the sequence $\{\phi^{\rho}_j\}_{j \in \N} \subset W^{1,\infty}(\R^n;\R^m)$ by
\[
\phi^{\rho}_j(x)=\begin{cases}
\displaystyle \frac{\rho}{j}\phi\Bigl(j\frac{(x-x_0)}{\rho}\Bigr) &\ \text{for $x \in Y_{\rho}:= x_0+ 
(0,\rho)^n$,}\\
0 &\ \text{otherwise, }
\end{cases} \qquad x\in \R^n.
\]
As this is a periodically oscillating sequence that converges to zero essentially uniformly, we find that 
$\phi^{\rho}_j \weakto 0$ in $W^{1,p}(\R^n;\R^m)$ as $j \to \infty$. 

Take a cut-off function $\chi \in C_c^{\infty}(\O_{-\d};[0,1])$ with $\chi \equiv 1$ on $x_0+Y$ and define the sequence $\{u_j\}_{j \in \N} \subset \Hspd_0(\Omega;\R^m)$ given by
\[
u_j := \varphi_0 + \chi \Pcal^s_\d \phi^{\rho}_j,
\]
which converges weakly to $\varphi_0$ in $\Hspd_0(\O;\R^m)$ in light of the continuity of $\Pcal^s_\d$ in Theorem~\ref{prop:connection}\,$(ii)$. In particular, we have $u_j \to \phi_0$ in $L^p(\O_\d;\R^m)$ by \cite[Theorem~7.3]{BeCuMC22}. Moreover, it holds by the Leibniz rule in Lemma~\ref{le:leibniz} and the fact that $D^s_\d \Pcal^s_\d \phi^{\rho}_j = \nabla \phi^{\rho}_j$ that
\[
D^s_\d u_j = D^s_\d \phi_0 + \chi \nabla \phi^{\rho}_j + K_{\chi}(\Pcal^s_\d\phi^{\rho}_j).
\]
Observe that $K_{\chi}(\Pcal^s_\d\phi^{\rho}_j) \to 0$ in $L^p(\O;\Rmn)$ as $j \to \infty$ due to the boundedness of $K_{\chi}$ and that $\chi \nabla \phi^{\rho}_j =\nabla \phi^{\rho}_j$ on $\O$ since $\phi^{\rho}_j$ is zero outside $Y_\rho$. 

Finally, we exploit the weak lower semicontinuity of $\Fcal$ on $\Hspd_0(\Omega;\R^m)$ to derive
\begin{align*}
 \int_{\O}f(x,\varphi_0,D^s_\d \varphi_0)\dx &\leq \liminf_{j \to \infty}\int_{\O} f(x,u_j,D^s_\d u_j)\dx\\
&= \liminf_{j \to \infty} \int_{Y_{\rho}} f(x,u_j,D^s_\d \phi_0 + \nabla \phi^{\rho}_j + K_{\chi}(\Pcal^s_\d\phi^{\rho}_j))\dx\\
& \qquad\quad +\int_{\O \setminus Y_{\rho}}f(x,u_j,D^s_\d \phi_0 + K_{\chi}(\Pcal^s_\d\phi^{\rho}_j))\dx\\
&\leq \liminf_{j \to \infty} \int_{Y_{\rho}} f(x,\varphi_0,D^s_\d \varphi_0+\nabla \phirho_j)\dx+\int_{\O \setminus Y_{\rho}}f(x,\varphi_0,D^s_\d \varphi_0)\dx,
\end{align*}
where the last inequality uses~\cite[Lemma~4.10]{KrS22} to remove all the terms that converge strongly to zero. In view of the $p$-growth of $f$, the integral over $\O\setminus Y_{\rho}$ is finite, so that subtracting it from both sides gives
\[
\int_{Y_{\rho}}f(x,\varphi_0,D^s_\d \varphi_0)\dx\leq \liminf_{j \to \infty} \int_{Y_{\rho}} f(x,\varphi_0,D^s_\d \varphi_0+\nabla \phirho_j)\dx.
\]
Because $\varphi_0$ and $D^s_\d \varphi_0$ are continuous and satisfy \eqref{eq:point}, the rest of the proof follows by mimicking Steps 2-4 of \cite[Theorem~4.5]{KrS22}.
\end{proof}

The following lemma was used in the previous proof and shows that one can construct smooth functions with compact support whose nonlocal gradient has a desired value at a point. The proof is omitted here, as it is nearly identical to \cite[Lemma~4.3]{KrS22}, given that $w_\d$ is radial by \ref{itm:h1}.
\begin{lemma}
\label{le:construction}
Let $s \in (0,1)$ and let $\Omega \subset \R^n$ be open and bounded. For any $x_0\in \Omega_{-\d}$, $z \in \R^m$ and $A \in \Rmn$, there exists a $\phi \in C_{c}^{\infty}(\Omega_{-\d};\R^m)$ such that $\phi(x_0)=z$ and $D^s_{\d} \phi(x_0)=A$.
\end{lemma}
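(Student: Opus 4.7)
The plan is to reduce to the scalar case $m=1$ and construct the required function as a sum of a radial bump (for the pointwise value) and anti-symmetric pieces of the form $y_i\chi(y)$ (for the gradient directions), exploiting the radiality of $w_\d$ from \ref{itm:h1}. Treating each component of $\R^m$ independently, it suffices to find, for prescribed $z \in \R$ and $a \in \R^n$, a scalar $\phi \in C_c^\infty(\O_{-\d})$ with $\phi(x_0) = z$ and $D^s_\d \phi(x_0) = a$. The translation invariance of $D^s_\d$ noted in Remark~\ref{rem:nonlocalgradprop}\,b) lets me assume $x_0 = 0$; openness of $\O_{-\d}$ then provides $r>0$ with $\overline{B(0,r)} \subset \O_{-\d}$, so that every function in $C_c^\infty(B(0,r))$ can be viewed as an element of $C_c^\infty(\O_{-\d})$.

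The key observation is parity. Writing the definition \eqref{eq: definition of nonlocal gradient} at $x=0$ and using $w_\d(-y)=w_\d(y)$ from \ref{itm:h1} gives
\begin{equation*}
D^s_\d \phi(0) = -c_{n,s}\int_{\R^n} \bigl(\phi(0)-\phi(y)\bigr)\,\frac{y\,w_\d(y)}{|y|^{n+s+1}}\,dy.
\end{equation*}
Hence, for any radial $\eta\in C_c^\infty(B(0,r))$ the integrand is odd in $y$ and so $D^s_\d\eta(0)=0$; choosing such $\eta$ with $\eta(0)=1$, the term $z\eta$ carries the prescribed pointwise value without contributing to the nonlocal gradient at $0$. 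To hit each coordinate direction, I fix a radial nonnegative $\chi\in C_c^\infty(B(0,r))$ with $\chi\not\equiv 0$ and set $\psi_i(y)=y_i\chi(y)$ for $i=1,\ldots,n$. Then $\psi_i(0)=0$, and using the elementary identity $\int_{\R^n} y_iy_j\, g(|y|)\,dy=\frac{\delta_{ij}}{n}\int_{\R^n} |y|^2 g(|y|)\,dy$ valid for integrable radial $g$, a direct computation yields
\begin{equation*}
D^s_\d\psi_i(0)=c_{n,s}\int_{\R^n}\frac{y_i\,y\,\chi(y)\,w_\d(y)}{|y|^{n+s+1}}\,dy=\kappa\,e_i,\qquad \kappa:=\frac{c_{n,s}}{n}\int_{\R^n}\frac{\chi(y)\,w_\d(y)}{|y|^{n+s-1}}\,dy>0,
\end{equation*}
with positivity of $\kappa$ following from $\chi,w_\d\geq 0$ together with $w_\d\equiv 1$ on $B(0,b_0\d)$ from \ref{itm:h3}. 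The function $\phi:=z\eta+\kappa^{-1}\sum_{i=1}^n a_i\psi_i$ then satisfies $\phi(0)=z$ and $D^s_\d\phi(0)=a$, as required.

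There is no serious obstacle here: the integrals converge absolutely since $w_\d(\cdot)/|\cdot|^{n+s-1}\in L^1(\R^n)$ and $\chi$ is smooth with compact support, and pointwise evaluation is legitimate because $D^s_\d\psi_i\in C^\infty(\R^n;\R^n)$ by Proposition~\ref{Prop: convolution with the classical gradient}. The only mild verification is the symmetry identity used to extract $\kappa e_i$ from the matrix integral, which is completely standard. The vector-valued case is then recovered by applying the scalar construction to each row of $(z,A)\in\R^m\times\Rmn$ and stacking the resulting components.
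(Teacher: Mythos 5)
Your construction is correct and is, in essence, the same radial-symmetry argument the paper outsources to \cite[Lemma~4.3]{KrS22}: the radiality of $w_\d$ (the very ingredient the paper highlights as making the proof carry over from the fractional case) is exactly what powers your parity cancellation for $\eta$ and the moment identity $\int y_i y_j\,g(|y|)\,dy = \tfrac{\delta_{ij}}{n}\int |y|^2 g(|y|)\,dy$ for $\psi_i$. The only (trivial) point worth tightening is the positivity of $\kappa$: choose $\chi$ supported in $B(0,\min(r,b_0\d))$, so that $w_\d\equiv 1$ on $\supp\chi$ and the integral $\int \chi\, w_\d\,|y|^{1-n-s}\,dy$ is manifestly positive; as stated, a $\chi$ whose support happens to avoid $\{w_\d>0\}$ would give $\kappa=0$.
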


With the perspective of Section~\ref{subsec:connection_localfrac}, there is an alternative approach to proving Theorem~\ref{theo:characterization} that passes through the characterization of weak lower semicontinuity of functionals depending on Riesz fractional gradients from~\cite{KrS22}. 
For simplicity, we take $g=0$ and drop the dependence on $x$ and $z$ in $f$.

\begin{proof}[Alternative proof of Theorem~\ref{theo:characterization}] \textit{Step~1: Sufficiency.} Let $\{u_j\}_{j\in\N}\subset H_0^{s, p, \delta}(\Omega;\R^m)$ converge weakly in $H_0^{s, p, \delta}(\Omega;\R^m)$ to the limit function $u\in H_0^{s, p, \delta}(\Omega;\R^m)$. As a quasiconvex function, $f:\R^{m\times n}\to \R$ is also rank-one convex  and hence, locally Lipschitz continuous in the sense that 
\begin{align*}
|f(A)-f(B)|\leq C(1+|A|^{p-1}+|B|^{p-1})|A-B|\qquad \text{ for all $A, B\in \R^{m\times n}$}
\end{align*}
with a constant $C>0$, cf.~e.g.,~\cite[Proposition~2.32]{Dac08}.

Consider the auxiliary function 
\begin{align*}
h_u(x, A)= \mathbbm{1}_{\Omega}(x)f\bigl(A+(\nabla R_\delta^s \ast u)(x)\bigr) \quad \text{ for $x\in \R^n$ and $A\in \R^{m\times n}$, }
\end{align*}
which is Carath\'{e}odory, quasiconvex in the second variable, and satisfies the growth bound
$$|h_u(x, A)|\leq C\bigl(1+ |A|^p + |(\nabla R_\delta^s\ast u)(x)|^p\bigr) \leq C\bigl(1 + |A|^p + \norm{u}^p_{L^p(\Omega_\delta;\R^m)}\bigr),$$ 
with the last step using the boundedness of $\nabla R_\delta^s$. By the local Lipschitz continuity of $f$, we also find 
\begin{align*}
&\Bigl| \int_\Omega f(D^s_\delta u_j) \, dx - \int_{\R^n} h_u(x, D^su_j)\, dx\Bigr| \\  &\qquad \qquad 
\leq  C\bigl(1+\norm{D_\delta^su_j}_{L^p(\Omega;\R^{m\times n})} +  \norm{u_j}_{L^p(\Omega_\delta;\R^m)} + \norm{u}_{L^p(\Omega_\delta;\R^m)}\bigr)
 \norm{u_j-u}_{L^p(\Omega_\delta;\R^m)} \to 0 
\end{align*}
as $j\to \infty$. Since $u_j\weakly u$ in $H_0^{s, p}(\Omega_{-\d};\R^m)$ by Lemma~\ref{le:nonlocalfrac} and $h_u$ fulfills the requirements of~\cite[Theorem~4.1]{KrS22}, the desired lower semicontinuity results from
\begin{align*}
\liminf_{j\to \infty} \Fcal(u_j) =\liminf_{j\to \infty} \int_\Omega f(D_\delta^su_j)\, dx  &= \liminf_{j\to \infty} \int_{\R^n} h_u(x, D^su_j)\, dx \\ & \geq \int_{\R^n} h_u(x, D^s u)\, dx =  \int_\Omega f(D_\delta^s u)\, dx = \Fcal(u).
\end{align*}
\smallskip

\textit{Step~2: Necessity.} Suppose $\Fcal$ is weakly lower semicontinuous on $H_0^{s, p, \delta}(\Omega;\R^m)$. Fix $(x_0, A_0)\in \Omega_{-\d}\times\R^{m\times n}$ and using Lemma~\ref{le:construction}, let 
$\varphi\in  C_c^\infty(\Omega_{-\delta};\R^m)$ be such that $D^s_\delta \varphi(x_0)=A_0$. A similar reasoning as in Step~1 shows for any sequence $\{u_j\}_{j\in \N}\subset H_0^{s, p}(\Omega_{-\d};\R^m)$ converging weakly in $H_0^{s, p}(\Omega_{-\d};\R^m)$ to $\varphi$ and with $\{D^s u_j\}_{j \in \N}$ $p$-equi-integrable that
\begin{align*}
\liminf_{j\to \infty} \int_{\R^n} h_\varphi(x, D^s u_j)\, dx\geq \liminf_{j\to \infty} \int_{\Omega} f(D^s_\d u_j)\, dx\geq  \int_{\R^n} h_\varphi(x, D^s \varphi)\, dx, 
\end{align*}
where the first inequality uses the $p$-equi-integrability of $\{D^s u_j\}_{j \in \N}$ and the strong convergence $\nabla R^s_\d*u_j \to \nabla R^s_\d * \phi$ in $L^p(\Omega;\Rmn)$ to apply a well-known freezing lemma (see e.g.,~\cite[Lemma~4.10]{KrS22}). The proof of \cite[Theorem~4.5]{KrS22} then yields for all $v\in W^{1, \infty}_0(Y;\R^m)$,
\begin{align*}
h_\varphi(x_0, A_0)\leq \int_{Y} h_\varphi(x_0, A_0+\nabla v)\, dy.
\end{align*}
If we further suppose that $\supp(\phi) \subset B(x_0,b_0\d)$, which is possible by Lemma~\ref{le:construction}, then $(\nabla R^s_\d * \phi)(x_0)=0$  since $\nabla R^s_\d =0$ in $B(0,b_0\d)$, see~\eqref{eq:gradR}. Therefore, the inequality turns into
\begin{align*}
f(A_0) \leq  \int_{Y} f(A_0+\nabla v)\, dy,
\end{align*}
as desired.
\end{proof}

Let us briefly comment on the role of quasiconvexity in Theorem~\ref{theo:characterization}, especially in relation with a new generalized convexity notion that can be considered natural in our nonlocal setting. For simplicity, we assume that $f$ is constant in the $x$- and $z$-variables.

\begin{remark}[$D_\delta^s$-quasiconvexity]\label{rem:dsdquasiconvexity}

Let $f:\R^{m\times n}\to \R$ be a measurable function. We call $f$ $D_\delta^s$-quasiconvex if for every $A\in \R^{m\times n}$, 
\begin{align}\label{eq:Dsdeltaqc}
 f(A) \leq\int_{Y} f\big(A+D_\delta^s\varphi (y)\big)\, dy\qquad \text{for all $\varphi \in H_{\#}^{s, \infty, \d}(Y;\R^m)$,}
\end{align}
whenever the integral on the left-hand side exists.

Under consideration of Remark~\ref{rem:connection}\,c) and by using the characterization of quasiconvexity with periodic test functions  (see~e.g.,~\cite[Proposition~5.13]{Dac08}), it follows immediately that $D^s_\d$-quasiconvexity is equivalent to the usual quasiconvexity. An analogous result for fractional instead of nonlocal gradients was established in~\cite{KrS22}, by showing equivalence of quasiconvexity with $\alpha$-quasiconvexity, where $\alpha=s$. Whether one can suitably replace the periodic boundary conditions by zero boundary conditions in \eqref{eq:Dsdeltaqc} is currently open. We expect that deeper insight into the nature of the complementary values spaces is required to answer this question and intend to address the latter in an upcoming work.
\end{remark}

With the previous findings at hand, the following existence result is now a simple consequence of the direct method.
\begin{corollary}\label{theo:existenceminimizers}
Let $s \in (0,1)$, $p \in (1,\infty)$, $\Omega \subset \R^n$ be open and bounded with $|\partial \O_{-\d}|=0$ and $g \in H^{s,p,\d}(\O)$. Suppose that $f:\O \times \R^m \times \Rmn \to \R$ is a Carath\'{e}odory function satisfying 
\[
c\abs{A}^p-C \leq f(x,z,A) \leq C(1+\abs{z}^p+\abs{A}^p)
\]
for a.e.~$x \in \O$ and all $(z,A) \in \R^m\times \Rmn$ with constants $c, C>0$. If $A \mapsto f(x,z,A)$ is quasiconvex for a.e.~$x \in \O_{-\d}$ and all $z \in \R^m$, then $\F$ as in~\eqref{eq:functional} admits a minimizer in $\Hspdgm$.
\end{corollary}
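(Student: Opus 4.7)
The plan is to apply the direct method of the calculus of variations, with Theorem~\ref{theo:characterization} supplying the weak lower semicontinuity step and the Poincar\'e inequality from Theorem~\ref{th:poincareindep} supplying coercivity. First, I would observe that $\F$ is proper and bounded below on $H^{s,p,\d}_g(\O;\R^m)$: the upper $p$-growth of $f$ and $g\in H^{s,p,\d}(\O;\R^m)$ give $\F(g)<\infty$, while the lower bound $f(x,z,A)\geq c|A|^p-C$ yields $\F(u)\geq c\|D^s_\d u\|_{L^p(\O;\R^{m\times n})}^p-C|\O|$, so $\inf \F\in\R$. Fix then a minimizing sequence $\{u_j\}_{j\in\N}\subset H^{s,p,\d}_g(\O;\R^m)$.

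Second, I would establish that $\{u_j\}_{j\in\N}$ is bounded in $H^{s,p,\d}(\O;\R^m)$. The coercivity of $f$ directly provides a uniform bound on $\|D^s_\d u_j\|_{L^p(\O;\R^{m\times n})}$, and hence on $\|D^s_\d(u_j-g)\|_{L^p(\O;\R^{m\times n})}$. Since $u_j-g\in H^{s,p,\d}_0(\O;\R^m)$, the $s$-independent Poincar\'e inequality of Theorem~\ref{th:poincareindep} bounds $\|u_j-g\|_{L^p(\O;\R^m)}$. To upgrade this to a bound on $\|u_j-g\|_{L^p(\O_\d;\R^m)}$, I would use that any function in $H^{s,p,\d}_0(\O;\R^m)$ vanishes a.e.~on $\O_\d\setminus\O_{-\d}$, because it is an $L^p$-limit of $C_c^\infty(\O_{-\d};\R^m)$-functions by definition~\eqref{Hspd0}. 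The triangle inequality with $\|g\|_{L^p(\O_\d;\R^m)}$ then bounds $\|u_j\|_{L^p(\O_\d;\R^m)}$ uniformly.

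Third, reflexivity of $H^{s,p,\d}(\O;\R^m)$ for $p\in(1,\infty)$ yields a (non-relabeled) subsequence with $u_j\weakto u^*$ in $H^{s,p,\d}(\O;\R^m)$. The affine subspace $H^{s,p,\d}_g(\O;\R^m)=g+H^{s,p,\d}_0(\O;\R^m)$ is norm-closed by construction and therefore weakly closed, so $u^*\in H^{s,p,\d}_g(\O;\R^m)$. Finally, the hypotheses of Theorem~\ref{theo:characterization} are met (the coercivity bound $f\geq c|A|^p-C$ clearly implies the required two-sided growth with $q=0$, and $f(x,z,\cdot)$ is quasiconvex on $\O_{-\d}$ by assumption), so $\F$ is weakly lower semicontinuous on $H^{s,p,\d}_g(\O;\R^m)$, giving $\F(u^*)\leq \liminf_{j\to\infty}\F(u_j)=\inf \F$, i.e.,~$u^*$ is a minimizer.

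The proof is essentially routine once the pieces are in place; the only mildly delicate point is the coercivity, where one must combine the two features of complementary-value spaces, namely applicability of the nonlocal Poincar\'e inequality on $\O$ and the vanishing of $H^{s,p,\d}_0(\O;\R^m)$-functions on the collar $\O_\d\setminus\O_{-\d}$, in order to control the $L^p$-norm on the full nonlocal closure $\O_\d$.
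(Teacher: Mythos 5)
Your proof is correct and takes essentially the same route as the paper's short proof: direct method, coercivity bounding $\{D^s_\d u_j\}_j$ in $L^p$, the nonlocal Poincar\'e inequality bounding the minimizing sequence in $H^{s,p,\d}_g(\O;\R^m)$ (the paper cites the inequality from~\cite{BeCuMC22} rather than Theorem~\ref{th:poincareindep}, but this is immaterial), reflexivity and weak closedness of the affine subspace yielding a weak limit, and Theorem~\ref{theo:characterization} supplying weak lower semicontinuity. One minor slip: Theorem~\ref{theo:characterization} requires $q \in [1,p)$, so when verifying its hypotheses you should take, e.g., $q=1$ rather than $q=0$, which is harmless since $f \geq -C \geq -C(1+|z|^p+|A|)$.
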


\begin{proof}
If $\{u_j\}_{j \in \N}\subset \Hspdgm$ is a minimizing sequence for $\Fcal$, we find by the coercivity bound on $f$ that $\{D^s_\d u_j\}_{j \in \N}$ is a bounded sequence in $L^p(\O;\Rmn)$. By the nonlocal Poincar\'{e} inequality in \cite[Theorem~6.2]{BeCuMC22}, it follows that $\{u_j\}_{j \in \N}$ is a bounded sequence in $\Hspdgm$, so that, up to a non-relabeled subsequence, $u_j \weakto u$ in $\Hspdgm$ for some $u \in \Hspdgm$. Together with Theorem~\ref{theo:characterization}, this shows that $u$ is a minimizer of $\F$ over $\Hspdgm$.
\end{proof}

\section{Homogenization and relaxation}\label{section:homrelax}

In the next step, we aim to prove new relaxation and homogenization results for our nonlocal  functionals. Both will follow as corollaries of a more general, abstract statement about the $\Gamma$-convergence of integral functionals with dependence on nonlocal gradients, which is of independent interest. Our approach relies on the connection between the nonlocal and classical gradient, as established in~Section~\ref{Section: Connection between nonlocal and classical Sobolev spaces}, in order to reduce the problem to a standard setting.

Throughout the section, let $s \in (0,1)$, $p \in (1,\infty)$ and $\Omega \subset \R^n$ be an open and bounded set with $\abs{\partial \O_{-\d}} =0$, and $g \in H^{s,p,\d}(\O;\R^m)$. Further, we assume that the readers are familiar with the basics of $\Gamma$-convergence, and refer to~\cite{Bra02, Dal93} for a comprehensive introduction. 

Let us start with some necessary notations in preparation for the announced abstract $\Gamma$-convergence result. For any  Carath\'{e}odory integrand $f:\Omega \times \Rmn \to \R$ with standard $p$-growth and $p$-coercivity, i.e., there are constants $C, c>0$ such that
\begin{equation}\label{eq:gamgrowth}
c\abs{A}^p-C \leq f(x, A) \leq C(\abs{A}^p+1)
\end{equation}
 for a.e.~$x \in \Omega$ and all $A \in \Rmn$, we define 
  the three integral functionals $\Ical_f:L^p(\Omega_{-\d};\R^m) \to \R_{\infty}$, $\Jcal_f:L^p(\Omega \setminus\Omega_{-\d};\Rmn)\to \R$ and $\Fcal_f:L^p(\Omega_\d;\R^m) \to\R_{\infty}$ as
\[
\Ical_f(v)=\begin{cases}
\displaystyle\int_{\Omega_{-\d}} f(x,\nabla v) \,dx \quad&\text{for $v \in W^{1,p}(\Omega_{-\d};\R^m)$},\\
\infty \quad&\text{otherwise,}
\end{cases}
\]
\[
\Jcal_f(V) = \int_{\Omega\setminus \Omega_{-\d}} f(x,V)\,dx,
\]
and
\[
\Fcal_f(u)= \begin{cases}
\displaystyle\int_{\Omega} f(x,D^s_\d u)\,dx \quad&\text{for $u \in H^{s,p,\d}_g(\Omega;\R^m)$},\\
\infty \quad&\text{otherwise,}
\end{cases}
\]
respectively.  
\begin{theorem}[General $\Gamma$-convergence result]\label{th:gamgeneral}
Suppose $f_j,f_{\infty}:\Omega\times\Rmn \to \R$ for $j \in\N$ are Carath\'{e}odory integrands satisfying \eqref{eq:gamgrowth} uniformly in $j$ and 
\begin{equation}\label{eq:gamlipschitz}
\abs{f_j(x,A)-f_j(x,B)} \leq M(1+\abs{A}^{p-1}+\abs{B}^{p-1})|A-B|
\end{equation}
for a.e.~$x \in \Omega$, all $A, B \in \Rmn$ and all $j \in \N$ with a constant $M>0$. If the sequence $\{\Ical_{f_j}\}_{j\in \N}$ converges to $\Ical_{f_\infty}$ in the sense of $\Gamma$-convergence regarding the strong topology in $L^p(\Omega_{-\delta};\R^m)$ as $j\to \infty$, in short,
\begin{align}\label{eq:gamclas1}
\Gamma(L^p)\text{-}\lim_{j \to \infty}\Ical_{f_j} = \Ical_{f_\infty}
\end{align} 
and
\begin{align}\label{eq:gamclas2}
\Jcal_{f_j} \to\Jcal_{f_{\infty}} \ \text{pointwise},
\end{align}
then \begin{align}\label{gammaFcal}
\Gamma(L^p) \text{-}\lim_{j \to \infty} \Fcal_{f_j} = \Fcal_{f_{\infty}},
\end{align}
that is, $\{\Fcal_{f_j}\}_{j\in \N}$ $\Gamma$-converges with respect to the strong topology in $L^p(\Omega_\delta;\R^m)$ to $\Fcal_{f_{\infty}}$ as $j \to \infty$.

Moreover, every sequence $\{u_j\}_{j\in N}\subset L^p(\Omega_\delta;\R^m)$ with uniformly bounded energy $\sup_{j}\Fcal_{f_j}(u_j)<\infty$ has a converging subsequence in $L^p(\Omega_\delta;\R^m)$.\\[-0.2cm]
\end{theorem}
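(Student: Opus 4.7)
My strategy reduces the $\Gamma$-convergence of $\{\Fcal_{f_j}\}$ to the hypotheses~\eqref{eq:gamclas1}--\eqref{eq:gamclas2} by exploiting the isomorphism between $H^{s,p,\d}$ and $W^{1,p}$ from Theorem~\ref{prop:connection}. The identity $D^s_\d u = \nabla(\Qcal^s_\d u)$ on $\Omega$ decomposes the energy as
\[
\Fcal_{f_j}(u) = \int_{\Omega_{-\d}} f_j\bigl(x,\nabla \Qcal^s_\d u\bigr)\,dx + \Jcal_{f_j}(D^s_\d u), \qquad u \in H^{s,p,\d}_g(\Omega;\R^m).
\]
The compactness assertion is immediate: the coercivity in~\eqref{eq:gamgrowth} combined with the nonlocal Poincar\'e inequality~\cite[Theorem~6.2]{BeCuMC22} applied to $u_j - g$ bounds $\{u_j\}$ in $H^{s,p,\d}(\Omega;\R^m)$, and the compact embedding from~\cite[Theorem~7.3]{BeCuMC22} yields a strongly converging subsequence in $L^p(\Omega_\d;\R^m)$.

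For the $\Gamma$-liminf, take $u_j \to u$ in $L^p(\Omega_\d;\R^m)$ with $\liminf_j \Fcal_{f_j}(u_j) <\infty$. The compactness argument produces $u_j \weakto u$ in $H^{s,p,\d}(\Omega;\R^m)$ with $u \in H^{s,p,\d}_g$, and continuity of $\Qcal^s_\d$ gives $v_j := \Qcal^s_\d u_j \to v := \Qcal^s_\d u$ strongly in $L^p(\Omega;\R^m)$. On $\Omega_{-\d}$, hypothesis~\eqref{eq:gamclas1} yields
\[
\liminf_{j\to\infty}\int_{\Omega_{-\d}} f_j(x,\nabla v_j)\,dx \geq \int_{\Omega_{-\d}} f_\infty\bigl(x, D^s_\d u\bigr)\,dx.
\]
On the collar, Lemma~\ref{le:strongoutside} provides the strong convergence $D^s_\d u_j \to D^s_\d u$ in $L^p(\Omega\setminus O;\Rmn)$ for every open $O \Supset \Omega_{-\d}$; combining the uniform Lipschitz bound~\eqref{eq:gamlipschitz}, the pointwise convergence~\eqref{eq:gamclas2}, the lower bound in~\eqref{eq:gamgrowth} on the shrinking strip $O\setminus\Omega_{-\d}$, and $|\partial \Omega_{-\d}|=0$, passing $O\downarrow \overline{\Omega_{-\d}}$ delivers $\liminf_j \Jcal_{f_j}(D^s_\d u_j) \geq \Jcal_{f_\infty}(D^s_\d u)$. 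Summing the two contributions gives the liminf inequality.

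For the $\Gamma$-limsup, set $v := \Qcal^s_\d u \in W^{1,p}(\Omega;\R^m)$ and let $\tilde v_j \to v$ in $L^p(\Omega_{-\d};\R^m)$ be a recovery sequence from~\eqref{eq:gamclas1}. A standard De Giorgi slicing argument, exploiting~\eqref{eq:gamgrowth} and~\eqref{eq:gamlipschitz}, produces a modification $\bar v_j$ with $\supp(\bar v_j - v) \subset K$ for some fixed compact $K \Subset \Omega_{-\d}$, still with $\bar v_j \to v$ in $L^p$ and $\int_{\Omega_{-\d}} f_j(x,\nabla\bar v_j)\,dx \to \int_{\Omega_{-\d}} f_\infty(x,\nabla v)\,dx$. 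Extending $w_j := \bar v_j - v$ by zero to $\R^n$ and choosing $\chi \in C_c^\infty(\Omega_{-\d})$ with $\chi \equiv 1$ on $K$, I set
\[
u_j := u + \chi\,\Pcal^s_\d w_j,
\]
which lies in $H^{s,p,\d}_g(\Omega;\R^m)$ (approximating $\chi\,\Pcal^s_\d w_j$ by mollifications in $C_c^\infty(\Omega_{-\d};\R^m)$). Lemma~\ref{le:leibniz} together with Theorem~\ref{prop:connection}\,$(ii)$ yields
\[
D^s_\d u_j = D^s_\d u + \nabla w_j + K_\chi(\Pcal^s_\d w_j).
\]
Since $\{w_j\}$ is bounded in $W^{1,p}(\R^n;\R^m)$ and $w_j \weakto 0$ there, Lemma~\ref{lem:compactnessV} forces $\Pcal^s_\d w_j \to 0$ strongly in $L^p(\Omega_\d;\R^m)$, hence $u_j \to u$ in $L^p(\Omega_\d;\R^m)$ and $K_\chi(\Pcal^s_\d w_j) \to 0$ in $L^p(\Omega;\Rmn)$. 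The Lipschitz bound~\eqref{eq:gamlipschitz} absorbs this commutator error on $\Omega_{-\d}$, while on $\Omega\setminus\Omega_{-\d}$ one has $\nabla w_j = 0$ and~\eqref{eq:gamclas2} combined with~\eqref{eq:gamlipschitz} yields $\Jcal_{f_j}(D^s_\d u_j) \to \Jcal_{f_\infty}(D^s_\d u)$, so that $\Fcal_{f_j}(u_j) \to \Fcal_{f_\infty}(u)$.

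The main obstacle lies in the limsup construction: the operator $\Pcal^s_\d$ is intrinsically nonlocal and does not preserve supports, so enforcing the complementary-value constraint $u_j = g$ in the collar forces a cutoff $\chi$, which generates the commutator $K_\chi(\Pcal^s_\d w_j)$. Its $L^p$-smallness reduces precisely to the compactness statement of Lemma~\ref{lem:compactnessV} and is the decisive nonlocal ingredient of the proof.
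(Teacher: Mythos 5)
Your strategy matches the paper's: compactness from coercivity plus the nonlocal Poincar\'e inequality and compact embedding; the liminf inequality by splitting the energy between $\Omega_{-\delta}$ (pulled back through $\Qcal^s_\delta$ to the classical $\Gamma$-liminf) and the collar (using Lemma~\ref{le:strongoutside} together with the locally uniform convergence of $\Jcal_{f_j}$ implied by \eqref{eq:gamclas2} and \eqref{eq:gamlipschitz}); and the limsup inequality by modifying a classical recovery sequence, pushing it forward with $\Pcal^s_\delta$, cutting off with $\chi$, and invoking Lemma~\ref{lem:compactnessV} to control the commutator $K_\chi(\Pcal^s_\delta w_j)$.

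There is, however, a genuine gap in the slicing step of the limsup construction. You claim a modification $\bar v_j$ with $\supp(\bar v_j - v)$ contained in a \emph{fixed} compact $K\Subset\Omega_{-\delta}$ together with the \emph{exact} energy convergence $\int_{\Omega_{-\delta}}f_j(x,\nabla\bar v_j)\,dx\to\int_{\Omega_{-\delta}}f_\infty(x,\nabla v)\,dx$. These two properties are in general incompatible. With $\bar v_j=v$ on the fixed annulus $\Omega_{-\delta}\setminus K$, the corresponding contribution converges to $\int_{\Omega_{-\delta}\setminus K}\bar f(x,\nabla v)\,dx$, where $\bar f$ is the pointwise limit of the $f_j$, while the contribution on $K$ is bounded below (up to $o(1)$) by $\int_K f_\infty(x,\nabla v)\,dx$ via the $\Gamma$-liminf; in the homogenization setting of Corollary~\ref{cor:homogenization} with a genuinely oscillating integrand one has $\bar f(A) > f_{\rm hom}(A)$ for $A\neq 0$, so the total overshoots the target for any $v$ with $\nabla v\neq 0$ on $\Omega_{-\delta}\setminus K$. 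The correct argument keeps a tolerance $\epsilon$ explicit: for each $\epsilon>0$, slicing gives $\bar v_j^\epsilon$ equal to $v$ outside a fixed $U_\epsilon\Subset\Omega_{-\delta}$ with $\limsup_j\int_{\Omega_{-\delta}}f_j(x,\nabla\bar v_j^\epsilon)\,dx\leq\int_{\Omega_{-\delta}}f_\infty(x,\nabla v)\,dx+\epsilon$; one then fixes $\chi_\epsilon$ (hence a fixed commutator $K_{\chi_\epsilon}$), sends $j\to\infty$, estimates the leftover strip $\Omega_{-\delta}\setminus U_\epsilon$ by the growth bound, and only at the very end lets $U_\epsilon\uparrow\Omega_{-\delta}$, $\epsilon\downarrow 0$ and extracts a diagonal sequence. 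This is exactly what the paper's proof does. With the slicing step repaired along these lines, the rest of your construction, including the decisive role of Lemma~\ref{lem:compactnessV}, goes through.
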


\begin{proof}
By adding a constant, we may assume without loss of generality that $f_j$ for $j \in \N$ and $f_{\infty}$ are non-negative. Further, we observe upfront that due to~\eqref{eq:gamlipschitz}, the functionals $\Jcal_{f_j}$ for $j \in \N$ are locally Lipschitz on $L^p(\Omega\setminus \Omega_{-\d};\Rmn)$ with a uniform Lipschitz constant. The pointwise convergence $\Jcal_{f_j} \to \Jcal_{f_{\infty}}$ in \eqref{eq:gamclas2} is therefore equivalent to locally uniform convergence; in particular, it holds for any sequence $V_j \to V$ in $L^p(\Omega\setminus \Omega_{-\d};\Rmn)$ that
\begin{equation}\label{eq:locuniform}
\lim_{j \to \infty} \Jcal_{f_j}(V_j) = \Jcal_{f_{\infty}}(V).
\end{equation}

The rest of the proof is split into the usual steps, proving first compactness to obtain the add-on and then, the liminf-inequality and a complementary upper bound via the existence of recovery sequences,
which in combination yields~\eqref{gammaFcal}. \smallskip 

\textit{Step~1: Compactness.} In view of the lower bound in \eqref{eq:gamgrowth}, this is an immediate consequence of the Poincar\'{e} inequality and compactness result in $H^{s,p,\d}_g(\O;\R^m)$ (cf.~\cite[Theorem~6.1 and 7.3]{BeCuMC22}). \smallskip 

\textit{Step 2: Liminf-inequality.} Let $\{u_j\}_{j\in \N} \subset L^p(\Omega_\delta;\R^m)$ be a convergent sequence for $j\to \infty$ with limit $u\in L^p(\Omega_\delta;\R^m)$. Suppose without loss of generality  that $\liminf_{j\to \infty} \F_{f_j}(u_{j}) = \lim_{j\to \infty}\F_{f_j}(u_{j}) < \infty$. It follows then from the coercivity bound in~\eqref{eq:gamgrowth} that $\{u_{j}\}_{j\in\N} \subset \Hspdgm$ is bounded and thus,  
\begin{align*}
u_{j} \weakto u \quad \text{in $\Hspdgm$.}
\end{align*} 
By Theorem~\ref{prop:connection}\,$(i)$, it holds that $\Qcal^s_\d u_j \weakto \Qcal^s_\d u$ in $W^{1,p}(\O;\R^m)$ with $\nabla\Qcal^s_\d u = D^s_\d u$ and $\nabla \Qcal^s_\d u_j = D^s_\d u_j$ for $j \in \N$ . Hence, the liminf-inequality from the $\Gamma$-convergence of $\{\Ical_{f_j}\}_{j\in \N}$ in~\eqref{eq:gamclas1} yields
\begin{equation}\label{eq:gamliminfins}
\int_{\Omega_{-\d}}f_{\infty}(x,D^s_\d u)\,dx = \Ical_{f_{\infty}}(\Qcal^s_\d u) \leq \liminf_{j \to \infty} \Ical_{f_j}(\Qcal^s_\d u_j) = \liminf_{j \to \infty} \int_{\Omega_{-\d}} f_j(x,D^s_\d u_j)\,dx.
\end{equation}

Additionally, for any $O\Subset \R^n$ open with $\Omega_{-\d} \Subset O$, it holds according to Lemma~\ref{le:strongoutside} that $\mathbbm{1}_{\Omega \setminus O}D^s_\d u_j \to \mathbbm{1}_{\Omega \setminus O}D^s_\d u$ in $L^p(\Omega \setminus \Omega_{-\d};\Rmn)$. We then find in view of~\eqref{eq:locuniform} and the non-negativity of the functions $f_j$ that
\begin{align*} 
\Jcal_{f^\infty}(\mathbbm{1}_{\Omega\setminus O}D_\delta^s u) = \lim_{j\to \infty} \Jcal_{f_j}(\mathbbm{1}_{\Omega\setminus O}D_\delta^s u_j) 
&\leq \liminf_{j \to \infty} \Jcal_{f_j}(D_\delta^su_j)
+ \int_{O \setminus \Omega_{-\d}} f_j(x,0)\,dx.
\end{align*}
Due to $0 \leq f_j(\cdot,0) \leq C$ for all $j \in \N$ and $\abs{\partial \Omega_{-\d}}=0$, one may let $O$ tend to $\Omega_{-\d}$ to conclude
\[
\int_{\Omega \setminus \Omega_{-\d}} f_{\infty}(x,D^s_\d u)\,dx  = \Jcal_{f_\infty} (D_\delta^s u)\leq \liminf_{j\to \infty} \Jcal_{f_j}(D_\delta^s u_j) =\liminf_{j \to \infty} \int_{\Omega \setminus \Omega_{-\d}} f_{j}(x,D^s_\d u_j)\,dx. 
\]
Finally, combining this with \eqref{eq:gamliminfins} yields the desired liminf-inequality $\liminf_{j\to \infty} \Fcal_{f_j}(u_j)\geq \Fcal_{f_\infty}(u)$. \smallskip

\textit{Step 3: Limsup-inequality.} Take $u \in \Hspdgm$ with $\Fcal_{f_{\infty}}(u) < \infty$ and define $v =  \Qcal^s_\d  u \in W^{1,p}(\Omega;\R^m)$, which satisfies $\nabla v = D^s_\d u$ on $\Omega$ by Theorem~\ref{prop:connection}\,$(i)$. We need to construct a recovery sequence $(u_j)_j\subset H_g^{s, p, \delta}(\Omega;\R^m)$ that converges to $u$ weakly in $L^p(\Omega;\R^m)$ and satisfies 
\begin{align}\label{limsupFcal}
\limsup_{j\to \infty} \Fcal_{f_j}(u_j)\leq \Fcal_{f_{\infty}}(u).
\end{align}

To this end, let $\epsilon >0$ be fixed. The upper bound from the $\Gamma$-convergence of $\{\Ical_{f_j}\}_{j\in \N}$ to $\Ical_{f_\infty}$ in combination with an argument to enforce boundary conditions as in \cite[Proof of Theorem~21.1]{Dal93} allows us to find a sequence $\{v_{j}\}_{j\in \N} \subset W^{1,p}(\Omega_{-\d};\R^m)$ with the properties that $v_j = v$ in $\Omega_{-\d} \setminus U_{\eps}$ for all $j\in \N$ with some open $U_{\eps} \Subset \Omega_{-\d}$,  $v_{j} \to v$ in $L^p(\Omega_{-\d};\R^m)$, and
\begin{equation}\label{eq:clasrecovery}
\limsup_{j \to \infty} \int_{\Omega_{-\d}} f_j(x, \nabla v_{j})\,dx \leq \int_{\Omega_{-\d}} f_{\infty}(x,\nabla v)\,dx+\epsilon <\infty.
\end{equation}
As a consequence of \eqref{eq:clasrecovery} together with the coercivity bound in \eqref{eq:gamgrowth} and Poincar\'e's inequality, the sequence $\{v_j\}_{j \in \N}$ converges not only in $L^p$, but also weakly in $W^{1,p}$, that is, 
\begin{align}\label{convergencevj}
v_{j}-v \weakto 0 \quad\text{ in $W^{1,p}(\Omega_{-\delta};\R^m)$.}
\end{align}
After extending $\{v_{j}-v\}_{j\in \N}$ by zero to a sequence in $W^{1,p}(\R^n;\R^m)$, we conclude from Theorem~\ref{prop:connection}\,$(ii)$ that 
\[
\tilde{u}_{j} :=\Pcal^s_\d (v_{j} -v) \in \Hspd(\Omega;\R^m)
\]
satisfies $D^s_\d \tilde{u}_{j} = \nabla (v_{j}-v)$ on $\Omega$. Hence, under consideration of~\eqref{convergencevj} and Lemma~\ref{lem:compactnessV}, 
\begin{equation}\label{eq:tildeue}
\tilde{u}_{j} \to 0 \ \text{in $L^p(\Omega;\R^m)$} \quad \text{and} \quad \tilde{u}_{j} \weakto 0 \ \text{in $\Hspd(\Omega;\R^m)$}
\end{equation}
as $j\to\infty$. Considering a cut-off function $\chi \in C_c^{\infty}(\O_{-\d})$ with $\chi  \equiv 1$ on $U_{\epsilon}$, we define 
\begin{center}
$u_{j} := u + \chi  \tilde{u}_{j} \in \Hspdgm$ for $j \in \N$. 
\end{center}
Then, by the Leibniz rule in Lemma~\ref{le:leibniz}, 
\begin{equation*}
D^s_\d u_{j} = D^s_\d u + \chi D^s_\d \tilde{u}_{j} + K_{\chi}(\tilde{u}_{j}) = \nabla v + \chi \nabla(v_j- v) + K_\chi(\tilde{u}_j) 
\end{equation*}
for every $j\in \N$; note that, in particular, $D^s_\d u_{j} = \nabla v_{j} + K_{\chi}(\tilde{u}_{j})$ on $U_\eps$, while $D^s_\d u_{j} =D^s_\d u + K_{\chi}(\tilde{u}_{j})$ on $\Omega \setminus U_{\epsilon}$ since $\nabla (v_{j}-v)$ is zero there. As $j\to \infty$, we have in view of~\eqref{eq:tildeue} that 
\begin{align}\label{conv_uj_Kchi}
u_{j} \to u \text{ in $L^p(\Omega;\R^m)$} \quad\text{ and }\quad
K_{\chi}(\tilde{u}_{j}) \to 0  \text{ in $L^p(\Omega;\Rmn)$.}
\end{align}

To show~\eqref{limsupFcal}, we split up the functionals $\Fcal_{f_j}$ into three integrals over $U_\eps$, $\Omega\setminus \Omega_{-\delta}$, and $\Omega_{-\delta}\setminus U_\eps$, and study their asymptotic behavior for $j\to \infty$ separately. First, since 
the local Lipschitz condition \eqref{eq:gamlipschitz} in combination with H\"{o}lder's inequality, \eqref{convergencevj}, and the second convergence in~\eqref{conv_uj_Kchi} shows
\[
\lim_{j \to \infty} \int_{U_{\eps}} \abs{f_j(x, \nabla v_{j} +K_{\chi}(\tilde{u}_{j})) - f_j(x, \nabla v_{j})}\,dx = 0,
\]
we can use~\eqref{eq:clasrecovery} to infer
\begin{equation}\label{eq:recovinside}
\begin{split}
\limsup_{j \to 0} \int_{U_{\eps}} f_j(x, D^s_\d u_{j})\,dx &= \limsup_{j \to \infty} \int_{U_{\eps}} f_j(x, \nabla v_{j}+K_{\chi}(\tilde{u}_{j}))\,dx \\
&=\limsup_{j \to \infty} \int_{U_{\eps}} f_j(x, \nabla v_{j})\,dx  \\ & \leq \int_{\Omega_{-\d}} f_{\infty}(x,\nabla v)\,dx +\eps =  \int_{\Omega_{-\d}} f_{\infty}(x,D_\delta^s u)\, dx +\eps.
\end{split}
\end{equation}
Second, $D^s_\d u_{j}= D^s_\d u + K_{\chi}(\tilde{u}_{j}) \to D^s_\d u$ in $L^p(\Omega \setminus \Omega_{-\d};\Rmn)$ along with \eqref{eq:locuniform} implies
\begin{align}
\begin{split}\label{eq:recovoutside}
\lim_{j \to \infty} \int_{\Omega \setminus \Omega_{-\d}} f_j(x, D^s_\d u_{j})\,dx =\lim_{j\to \infty} \Jcal_{f_j}(D_\delta^su_j) = \Jcal_{f^{\infty}}(D_\delta^s u)= \int_{\Omega\setminus \Omega_{-\d}} f_{\infty}(x,D^s_\d u)\,dx.
\end{split}
\end{align}
For the third integral expression, we find  with the upper bound in~\eqref{eq:gamgrowth} that
\begin{equation}\label{eq:recovstrip}
\begin{split}
\limsup_{j \to \infty} \int_{\Omega_{-\d} \setminus U_{\eps}} f_j(x,D^s_\d u_{j})\,dx &\leq \limsup_{j \to \infty} C\bigl(\norm{D^s_\d u +K_{\chi}(\tilde{u}_{j})}_{L^{p}(\Omega_{-\d}\setminus U_{\eps})}+\abs{\Omega_{-\d}\setminus U_{\eps}}\bigr) \\
&= C\bigl(\norm{D^s_\d u}_{L^{p}(\Omega_{-\d}\setminus U_{\eps})}+\abs{\Omega_{-\d}\setminus U_{\eps}}\bigr).
\end{split}
\end{equation}
Summing \eqref{eq:recovinside}, \eqref{eq:recovoutside} and \eqref{eq:recovstrip} finally gives 
\begin{align*}
\limsup_{j\to \infty} \Fcal_{f_j}(u_j) \leq \Fcal_{f_{\infty}}(u)+C\bigl(\norm{D^s_\d u}_{L^{p}(\Omega_{-\d}\setminus U_{\eps})}+\abs{\Omega_{-\d}\setminus U_{\eps}}\bigr) +\eps.
\end{align*}
Letting $U_{\eps} \uparrow \O_{-\d}$ and $\epsilon \downarrow 0$ finishes the proof of~\eqref{limsupFcal} after choosing an appropriate diagonal sequence.
\end{proof}

As indicated before, the above theorem enables us to carry over well-known results on variational convergence for standard integral-functionals to our nonlocal setting. One example we wish to highlight here lies within the variational theory of homogenization. Given the classical findings in~\cite{Mul87, Bra85}, we can derive the $\Gamma$-limit of nonlocal functionals with periodic oscillations in the space variable as an immediate consequence of~Theorem~\ref{th:gamgeneral}. It turns out that the homogenized integrand complies with the same (multi-)cell formula as in the classical case  when integrating over $\Omega$, while in the strip where complementary-values are prescribed, an averaging of the integrand in the fast variable occurs.

\begin{corollary}[Homogenization]\label{cor:homogenization}
Let $Y=(0,1)^n$ and let $f:\R^n \times \Rmn \to \R$ be a Carath\'{e}odory integrand that is $Y$-periodic in its first argument and satisfies for a.e.~$y \in \R^n$ and all $A, B \in \Rmn$ that
\begin{align*}
c\abs{A}^p-C \leq f(y, A) \leq C(\abs{A}^p+1) 
\end{align*}
and 
\begin{align}\label{eq:lipschitz}
\abs{f(y,A)-f(y,B)} \leq M(1+\abs{A}^{p-1}+\abs{B}^{p-1})|A-B|
\end{align}
with constants $c,C,M >0$. Further, let the functionals $\Fcal_{\epsilon}, \Fcal_{\rm hom}: L^p(\Omega_\delta;\R^m) \to \R_\infty$ with $\epsilon >0$ be defined as 
\begin{align*}
 \Fcal_{\epsilon}(u) = \begin{cases}
\displaystyle \int_{\Omega} f\Bigl(\frac{x}{\epsilon}, D^s_\d u\Bigr)\,dx & \text{ for $u \in  \Hspdgm$,} \\
  \infty & \text{otherwise,}
 \end{cases}
\end{align*}
and 
\begin{align*}
\Fcal_{\rm hom}(u) = \begin{cases}
\displaystyle \int_{\Omega_{-\d}} f_{\rm hom}(D^s_\d u)\,dx + \int_{\Omega \setminus \O_{-\d}}  \bar{f}(D^s_\d u)\,dx  &  \text{ for $u \in  \Hspdgm$,} \\ \infty & \text{otherwise,} 
\end{cases}
\end{align*}
where $\bar{f}:=\int_Y f(y,\,\cdot\,)\,dy$ and $f_{\rm hom}$ is the classical homogenized integrand given for $A\in \Rmn$ by
\begin{align}\label{fhom}
f_{\rm hom}(A) = \lim_{k \to \infty} \frac{1}{k^n}\inf\left\{\int_{kY} f(y, A+\nabla v(y))\,dy\,:\, v \in W^{1,\infty}_\#(kY;\R^m)\right\}. 
\end{align}
Then, the convergence
$$\Gamma(L^p) \text{-}\lim_{\eps\to 0} \Fcal_\eps = \Fcal_{\rm hom}$$
holds, along with the corresponding compactness in $L^p(\Omega_\delta;\R^m)$.
\end{corollary}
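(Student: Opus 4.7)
The strategy is to deduce the result from Theorem~\ref{th:gamgeneral}. Fix an arbitrary sequence $\eps_j \downarrow 0$ and set $f_j(x,A) := f(x/\eps_j, A)$ for $(x,A) \in \Omega \times \R^{m\times n}$. The uniform growth and coercivity bounds~\eqref{eq:gamgrowth} and the local Lipschitz bound~\eqref{eq:gamlipschitz} for $\{f_j\}_{j\in\N}$ are inherited directly from the corresponding properties of $f$, since they are satisfied at a.e.~$y\in\R^n$ with constants independent of $y$. It thus suffices to identify the $\Gamma$-limit of $\{\Ical_{f_j}\}_j$ on $L^p(\Omega_{-\d};\R^m)$ and the pointwise limit of $\{\Jcal_{f_j}\}_j$ on $L^p(\Omega\setminus \Omega_{-\d};\R^{m\times n})$.

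The $\Gamma$-convergence $\Ical_{f_j} \to \Ical_{f_{\rm hom}}$, with $f_{\rm hom}$ given by~\eqref{fhom}, is precisely the classical periodic homogenization theorem of Braides~\cite{Bra85} and M\"uller~\cite{Mul87}, which applies under exactly the hypotheses imposed on $f$. For the pointwise convergence on the collar, the starting point is the Riemann--Lebesgue-type property $f(\cdot/\eps_j,A) \weaklystar \bar f(A)$ in $L^\infty(\R^n)$ as $j\to\infty$ for each fixed $A\in\R^{m\times n}$, which implies
\[
\int_E f(x/\eps_j, A)\, dx \to |E|\,\bar f(A) \qquad \text{as } j\to\infty
\]
for every measurable $E\subset \Omega\setminus\Omega_{-\d}$. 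By linearity, this yields $\Jcal_{f_j}(V)\to \Jcal_{\bar f}(V)$ for every simple function $V$. For a general $V\in L^p(\Omega\setminus\Omega_{-\d};\R^{m\times n})$, I would approximate $V$ in $L^p$ by simple functions $V_k$ and combine~\eqref{eq:lipschitz} with H\"older's inequality to obtain
\[
|\Jcal_{f_j}(V) - \Jcal_{f_j}(V_k)| \leq C\bigl(1 + \|V\|_{L^p}^{p-1} + \|V_k\|_{L^p}^{p-1}\bigr)\|V-V_k\|_{L^p},
\]
with an identical bound for $\Jcal_{\bar f}$, so that a standard $3\eps$-argument delivers $\Jcal_{f_j}(V)\to\Jcal_{\bar f}(V)$.

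Theorem~\ref{th:gamgeneral} then yields $\Gamma(L^p)\text{-}\lim_{j\to\infty} \Fcal_{\eps_j} = \Fcal_{\rm hom}$ together with the $L^p(\Omega_\d;\R^m)$-compactness of sequences with equibounded energy. Since the limit $\Fcal_{\rm hom}$ is independent of the chosen sequence $\eps_j \downarrow 0$, the Urysohn property of $\Gamma$-convergence upgrades this to $\Gamma(L^p)\text{-}\lim_{\eps\to 0} \Fcal_\eps = \Fcal_{\rm hom}$ for the continuous parameter. The only non-trivial input beyond Theorem~\ref{th:gamgeneral} is the classical homogenization formula on $\Omega_{-\d}$, so no new analytical obstacle arises; the conceptual point is simply that the translation mechanism of Section~\ref{Section: Connection between nonlocal and classical Sobolev spaces}, which powers Theorem~\ref{th:gamgeneral}, makes any such classical $\Gamma$-convergence statement directly transferable to the nonlocal-gradient setting, with the collar contributing only the mean value $\bar f$ owing to the strong convergence of nonlocal gradients there (cf.~Lemma~\ref{le:strongoutside}).
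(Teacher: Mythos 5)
Your proposal is correct and follows the paper's strategy exactly: deduce the statement from Theorem~\ref{th:gamgeneral} with $f_j(x,A)=f(x/\eps_j,A)$, invoke the classical periodic homogenization theorem for \eqref{eq:gamclas1}, and verify \eqref{eq:gamclas2} by reducing to a dense subset via the uniform local Lipschitz bound. The only, minor, variation is in the dense-set verification for \eqref{eq:gamclas2}: the paper checks pointwise convergence on $C(\overline{\O};\R^{m\times n})$ by citing an admissibility result from two-scale convergence theory, while you check it on simple functions via the Riemann--Lebesgue weak-$*$ convergence $f(\cdot/\eps_j,A)\weaklystar\bar f(A)$ in $L^\infty$, which is a slightly more elementary but equally valid route.
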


\begin{proof}
Let $\{\epsilon_j\}_{j\in \N}$ be a sequence with $\epsilon_j \to 0$ as $j \to \infty$ and set
\[
f_j(x,A):=f\Bigl(\frac{x}{\eps_j},A\Bigr) \ \text{for $j \in \N$} \quad \text{and} \quad f_{\infty}(x,A)= \mathbbm{1}_{\Omega_{-\d}}(x)f_{\rm hom}(A) + \mathbbm{1}_{\Omega \setminus \Omega_{-\d}}(x) \bar{f}(A),
\]
for $x \in \Omega$ and $A \in \Rmn$. To conclude the statement from Theorem~\ref{th:gamgeneral}, it suffices to verify the two convergence conditions~\eqref{eq:gamclas1} and~\eqref{eq:gamclas2} for these specific choices of $f_j$ and $f_{\infty}$. Indeed,~\eqref{eq:gamclas1} follows from a classical homogenization result, see~e.g.,~\cite[Theorem~2.1]{BrM08}. For~\eqref{eq:gamclas2}, we note that since $\Jcal_{f_j}$ is locally Lipschitz on $L^p(\Omega\setminus \Omega_{-\d};\Rmn)$ with a constant uniform in $j$ by~\eqref{eq:lipschitz}, it is enough to prove the pointwise convergence on a dense set, for example on $C(\overline{\Omega};\Rmn)$. For $V \in C(\overline{\Omega};\Rmn)$, the convergence
\begin{align*}
\lim_{j \to \infty}\Jcal_{f_j}(V)=\lim_{j \to \infty} \int_{\Omega\setminus \Omega_{-\d}}f\Bigl(\frac{x}{\eps_j},V\Bigr)\,dx=\int_{\Omega\setminus \Omega_{-\d}} \bar{f}(V)\,dx=\Jcal_{f_{\infty}}(V)
\end{align*}
follows from the fact that $(y,x) \mapsto f(y,V(x))$ is 
an admissible two-scale integrand (cf.~\cite[Corollary~5.4]{All92}).
\end{proof}
As a special case of the homogenization result when the integrand does not depend on $y$, we derive a relaxation result for functionals $\Fcal:L^p(\Omega_\d;\R^m) \to \R_{\infty}$ of the form
\begin{align}\label{funcrelax} \Fcal(u) = \begin{cases}
\displaystyle\int_{\Omega} f(D^s_\d u)\,dx \quad &\text{if $u \in \Hspdgm$},\\
\infty \quad&\text{otherwise.}
\end{cases}
\end{align}
Recall that the relaxation of $\Fcal$ with respect to $L^p$-convergence is given by
\[
\Fcal^{\rm rel}(u)=\inf\left\{\liminf_{j \to \infty}\Fcal(u_j)\,:\,  u_j \to u \ \text{in $L^p(\Omega_\d;\R^m)$}\right\},
\]
which corresponds to the $\Gamma$-limit of the sequence constantly equal to $\Fcal$ (cf.~\cite[Remark~4.5]{Dal93}). Besides, it is easy to verify in this case that the multi-cell homogenization formula in \eqref{fhom} reduces to the quasiconvex envelope
\[
f^{\rm qc}(A)=\inf\Bigl\{\int_Y f(A+\nabla v)\,dx\,:\, v \in W^{1,\infty}_\#(Y;\R^m)\Bigr\},\quad A \in \Rmn. 
\]
The following statement is now an immediate consequence of Corollary~\ref{cor:homogenization}.
\begin{corollary}[Relaxation]\label{cor:relaxation}
Let $f:\Rmn \to \R$ be continuous and satisfy for all $A, B \in \Rmn$
\begin{align*}
c\abs{A}^p-C \leq f(A) \leq C(\abs{A}^p+1)
\end{align*}
and 
\begin{align*}
\abs{f(A)-f(B)} \leq M(1+\abs{A}^{p-1}+\abs{B}^{p-1})|A-B|
\end{align*}
with constants $c,C,M >0$. Then, the relaxation of $\Fcal$ in \eqref{funcrelax} is given by
\begin{align}\label{relaxationformula}
\Fcal^{\rm rel}(u)=\begin{cases}
\displaystyle \int_{\Omega_{-\d}}f^{\rm qc}(D^s_\d u)\,dx + \int_{\Omega\setminus\Omega_{-\d}} f(D^s_\d u)\,dx \quad&\text{if $u \in \Hspdgm$},\\
\infty \quad&\text{otherwise}.
\end{cases}
\end{align}
\end{corollary}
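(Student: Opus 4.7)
The plan is to obtain Corollary~\ref{cor:relaxation} as a direct specialization of the homogenization result in Corollary~\ref{cor:homogenization}. Namely, I would apply that corollary to the $y$-independent integrand $f(y,A):=f(A)$, which is trivially $Y$-periodic and satisfies all the required growth, coercivity and local Lipschitz assumptions by hypothesis. In this situation the scaled functionals $\Fcal_\eps$ all coincide with $\Fcal$ independently of $\eps$, so the $\Gamma$-limit provided by Corollary~\ref{cor:homogenization} reduces to the $\Gamma$-limit of the constant sequence $\{\Fcal\}$, which by \cite[Remark~4.5]{Dal93} is precisely the relaxation $\Fcal^{\rm rel}$ with respect to $L^p(\Omega_\d;\R^m)$.

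Next I would identify the two limit integrands appearing in the formula for $\Fcal_{\rm hom}$. For the strip $\Omega\setminus\Omega_{-\d}$, the formula $\bar f(A)=\int_Y f(A)\,dy=f(A)$ is immediate. For the interior $\Omega_{-\d}$, I need to verify that the multi-cell homogenization formula \eqref{fhom} for a $y$-independent integrand reduces to the quasiconvex envelope, i.e.,
\[
f_{\rm hom}(A)=\lim_{k\to\infty}\frac{1}{k^n}\inf\Bigl\{\int_{kY}f(A+\nabla v)\,dy \,:\, v\in W^{1,\infty}_\#(kY;\R^m)\Bigr\}=f^{\rm qc}(A),
\]
for every $A\in\Rmn$. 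This is a classical identity: one inclusion is obtained by rescaling any admissible $Y$-periodic competitor in the definition of $f^{\rm qc}$ to a $kY$-periodic one and using periodicity of $f(A+\nabla v)$; the reverse inequality follows by taking any $kY$-periodic competitor $v$, rescaling it to $\tilde v(y):=v(ky)/k \in W^{1,\infty}_\#(Y;\R^m)$ and exploiting that quasiconvexification is characterized equivalently via periodic test functions (see e.g.~\cite[Proposition~5.13]{Dac08}). Alternatively, one can directly invoke the standard fact that the classical relaxation of $v\mapsto \int_{\Omega_{-\d}}f(\nabla v)\,dx$ on $W^{1,p}$ has density $f^{\rm qc}$, which coincides with $f_{\rm hom}$ when $f$ does not depend on $y$.

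Plugging these identifications into the formula for $\Fcal_{\rm hom}$ from Corollary~\ref{cor:homogenization} yields exactly \eqref{relaxationformula}, and the compactness add-on of Theorem~\ref{th:gamgeneral} (which is inherited by Corollary~\ref{cor:homogenization}) ensures that $\Fcal^{\rm rel}$ is indeed the relaxation with respect to the strong $L^p(\Omega_\d;\R^m)$-topology. I do not foresee any substantial obstacle: the only non-trivial point is the equality $f_{\rm hom}=f^{\rm qc}$, which is a well-known computation and can also be read off from the classical relaxation theorem for integral functionals depending on classical gradients.
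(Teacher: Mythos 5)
Your argument matches the paper's proof: specialize Corollary~\ref{cor:homogenization} to a $y$-independent integrand, note that the constant sequence's $\Gamma$-limit is the relaxation (via \cite[Remark~4.5]{Dal93}), and observe that $\bar f=f$ and that the multi-cell formula $f_{\rm hom}$ collapses to the quasiconvex envelope $f^{\rm qc}$. Your more detailed verification of the identity $f_{\rm hom}=f^{\rm qc}$ is a correct elaboration of what the paper dismisses as "easy to verify."
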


\begin{remark} 
Note that the three $\Gamma$-convergence statements in this section   
can be rephrased equivalently for functionals defined on $H^{s,p, \delta}_g(\Omega;\R^m)$, if the latter is endowed with the weak topology (cf.~e.g.,~\cite[Proposition~8.16]{Dal93}).
\end{remark}

\section{$\Gamma$-convergence for varying fractional parameter}\label{sec:Gamma}

Finally, we study the asymptotic behavior of the nonlocal integral functionals in~\eqref{defFcals} as the fractional parameter $s$ varies. Of particular interest is the critical regime $s\to 1$, which leads to localization, meaning a local limit functional, as we prove below. 

The set-up in this section is similar to the previous one. Let $s\in [0,1]$, $p \in (1,\infty)$ and let $\O \subset \R^n$ be open and bounded such that $\O_{-\d}$ is a Lipschitz domain. Further, let $f:\Omega\times \R^{m\times n}\to \R$ be a Carath\'eodory function with (uniform) $p$-growth and $p$-coercivity in the second variable, i.e., there are constants $c, C>0$ such that
\begin{align*}
c\abs{A}^p-C\leq f(x,A) \leq C(1+\abs{A}^p) \qquad \text{for a.e.~$x \in \O$ and all $A \in \Rmn$.}
\end{align*} 
We define the functionals $\Fcal_{s}:L^p(\O_\d;\R^m) \to \R_{\infty}$ as
\begin{align}\label{defFcals}
\Fcal_{s}(u)=\begin{cases}
\displaystyle \int_{\O} f(x,D^s_\d u(x))\,dx &\text{for $u \in H^{s,p,\d}_0(\O;\R^m)$},\\
\displaystyle \infty \qquad&\text{otherwise.}
\end{cases}
\end{align}
Recalling that $D_\delta^1$ is defined to coincide with the classical weak gradient, i.e., $D_\delta^1u=\nabla u$, and the identification of $H_0^{1, p, \delta}(\Omega;\R^m)$ in \eqref{eq:classicalspace2}, we have for $s=1$ the local integral functional,
\begin{align*}
\Fcal_{1}(u)=\int_{\O} f(x,\nabla u(x))\,dx \quad \text{for $u \in W^{1,p}_0(\Omega_{\delta};\R^m)$ with $u=0$ a.e.~in $\Omega_\delta\setminus\Omega_{-\delta}$}, 
\end{align*} 
and $\Fcal_1=\infty$ otherwise in $L^p(\Omega_\delta;\R^m)$.

The next theorem establishes the variational convergence of the functionals $\{\Fcal_s\}_s$. 
The proof combines the preparations and tools from the earlier sections, such as the compactness result in~Lemma~\ref{le:ordercompactness} and the translation mechanism between nonlocal and local gradients of Theorem~\ref{prop:connection}.

\begin{theorem}[$\Gamma$-limits for \boldmath{$s\to s'\in [0,1]$}]\label{theo:Gammalimits}
Let $\Fcal_s$ for $s\in [0,1]$ be as in~\eqref{defFcals} with the additional property that $f(x, \cdot)$ is quasiconvex for a.e.~$x\in \Omega_{-\delta}$. 
Then, the family $\{\Fcal_s\}_{s}$ converges for $s\to s'$ to $\Fcal_{s'}$ in the sense of $\Gamma$-convergence, both regarding the weak and strong topology in $L^p(\Omega_\d;\R^m)$, that is,
\begin{align}\label{Gammalimit12}
\Gamma(L^p)\text{-}\lim_{s\to s'} \Fcal_{s} = \Fcal_{s'}= \Gamma(w\text{-}L^p)\text{-}\lim_{s\to s'} \Fcal_s.
\end{align}
Sequential compactness of sequences with uniformly bounded energy holds with respect to the strong topology in $L^p(\Omega_\d;\R^m)$ if $s'\in (0,1]$ and the weak topology in $L^p(\Omega;\R^m)$ if $s'=0$.
\end{theorem}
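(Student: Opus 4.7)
The plan is to prove compactness, the $\Gamma$-liminf, and the existence of a recovery sequence separately. Since the liminf argument only requires weak $L^p$-convergence and the limsup construction produces a strongly convergent (hence weakly convergent) sequence, both statements in~\eqref{Gammalimit12} follow at once.

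For the compactness when $s'\in(0,1]$, fix $t\in(0,s')$ if $s'<1$, or any $t\in(0,1)$ if $s'=1$. Then $s_j\geq t$ eventually, and Corollary~\ref{cor:orderinequality} together with Theorem~\ref{th:poincareindep} shows that $\{u_j\}$ is bounded in $H^{t,p,\delta}_0(\O;\R^m)$. The compact embedding $H^{t,p,\delta}_0(\O;\R^m)\cembed L^p(\O_\delta;\R^m)$ from~\cite[Theorem~7.3]{BeCuMC22} then yields a strongly convergent subsequence. For $s'=0$, Theorem~\ref{th:poincareindep} provides only weak $L^p$-compactness.

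For the $\Gamma$-liminf, let $u_j\weakto u$ in $L^p(\O_\delta;\R^m)$ with $u_j\in H^{s_j,p,\delta}_0(\O;\R^m)$ and $\liminf \Fcal_{s_j}(u_j)<\infty$. Coercivity bounds $\{D^{s_j}_\delta u_j\}$ in $L^p(\O;\R^{m\times n})$, and Lemma~\ref{le:ordercompactness} gives, along a subsequence, $u\in H^{s',p,\delta}_0(\O;\R^m)$, $D^{s_j}_\delta u_j\weakto D^{s'}_\delta u$ in $L^p(\O;\R^{m\times n})$, and $D^{s_j}_\delta u_j(x)\to D^{s'}_\delta u(x)$ pointwise a.e.~on $\O\setminus\O_{-\delta}$. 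I split $\Fcal_{s_j}(u_j)$ into integrals over $\O_{-\delta}$ and $\O\setminus\O_{-\delta}$ and treat the two parts separately. On $\O_{-\delta}$, Theorem~\ref{prop:connection}\,(i) (whose proof extends verbatim to $s=0$, and which is interpreted with $\mathcal{Q}^1_\delta=\mathrm{id}$ for $s=1$) combined with the $L^1$-continuity of $s\mapsto Q^s_\delta$ from Lemma~\ref{le:qnorm} yields $v_j:=\mathcal{Q}^{s_j}_\delta u_j\weakto v:=\mathcal{Q}^{s'}_\delta u$ in $W^{1,p}(\O;\R^m)$ with $\nabla v_j=D^{s_j}_\delta u_j$; classical Acerbi--Fusco weak lower semicontinuity for quasiconvex integrands then delivers
\begin{equation*}
\liminf_{j\to\infty}\int_{\O_{-\delta}} f(x,D^{s_j}_\delta u_j)\,dx \geq \int_{\O_{-\delta}} f(x,D^{s'}_\delta u)\,dx.
\end{equation*}
On the collar $\O\setminus\O_{-\delta}$, the pointwise convergence together with $f\geq -C$ lets Fatou's lemma give the analogous inequality. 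Summing via $\liminf(a_j+b_j)\geq \liminf a_j+\liminf b_j$ yields $\Fcal_{s'}(u)\leq\liminf \Fcal_{s_j}(u_j)$, valid for both the weak and strong topology.

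For the $\Gamma$-limsup I argue by density. Any $u\in C_c^\infty(\O_{-\delta};\R^m)$ belongs to $H^{s,p,\delta}_0(\O;\R^m)$ for every $s\in[0,1]$; the constant sequence $u_j:=u$ then satisfies $\Fcal_{s_j}(u)\to\Fcal_{s'}(u)$ thanks to the uniform convergence $D^{s_j}_\delta u\to D^{s'}_\delta u$ on $\R^n$ from Lemma~\ref{le:orderconvergencesmooth} combined with dominated convergence based on the $p$-growth of $f$. For a general $u\in H^{s',p,\delta}_0(\O;\R^m)$, the definition of this space supplies smooth $u^k\in C_c^\infty(\O_{-\delta};\R^m)$ with $u^k\to u$ in $H^{s',p,\delta}$; continuity of the Nemytskii operator on $L^p$ yields $\Fcal_{s'}(u^k)\to\Fcal_{s'}(u)$, and a standard diagonal extraction $k=k(j)$ produces the required strong recovery sequence, which is automatically weakly convergent. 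The main technical obstacle is coupling the varying fractional order $s_j$ with the classical weak lower semicontinuity machinery on $\O_{-\delta}$; this is overcome by the translation identity $D^{s_j}_\delta = \nabla\mathcal{Q}^{s_j}_\delta$ of Theorem~\ref{prop:connection}\,(i), whose $s$-stability is supplied by Lemma~\ref{le:qnorm}.
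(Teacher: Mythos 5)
Your proposal follows the paper's proof in its three-step structure (compactness via Corollary~\ref{cor:orderinequality}/Theorem~\ref{th:poincareindep} and the compact embedding, liminf via Lemma~\ref{le:ordercompactness} and the translation to classical gradients on $\Omega_{-\delta}$ plus Fatou on the collar, limsup via density and Lemma~\ref{le:orderconvergencesmooth} with a diagonal argument) and uses the same key tools, so it is substantially the same proof. One worthwhile refinement you supply: to get $v_j=\Qcal^{s_j}_\d u_j\weakto v=\Qcal^{s'}_\d u$ in $W^{1,p}(\O;\R^m)$ you argue directly via the $L^1(\R^n)$-continuity of $s\mapsto Q^s_\d$ from Lemma~\ref{le:qnorm}\,$(i)$ (a duality argument with $u_j\weakto u$ in $L^p(\O_\d)$ and $\tilde Q^{s_j}_\d*\phi\to\tilde Q^{s'}_\d*\phi$ strongly), whereas the paper only deduces boundedness of $\{v_j\}$ in $W^{1,p}$ and weak convergence of the gradients and then adjusts by ``a suitable choice of translations''; your route is cleaner since it avoids the ambiguity by constants. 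You also correctly flag that Theorem~\ref{prop:connection}\,$(i)$ must be read for $s=0$ as well and that its proof extends there, a point the paper glosses over. The only minor sloppiness is in the liminf step: from $\liminf_j\Fcal_{s_j}(u_j)<\infty$ you cannot directly conclude that $\{D^{s_j}_\d u_j\}$ is uniformly bounded in $L^p$; one should first pass (without loss of generality) to a subsequence realizing the $\liminf$ as a finite limit, as the paper does, before invoking coercivity and Lemma~\ref{le:ordercompactness}. This is routine and does not affect correctness.
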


\begin{proof}
Let  $\{s_j\}_{j\in \N} \subset [0,1]$ be a sequence converging to $s'\in [0,1]$ as $j \to \infty$.\smallskip

\textit{Step~1: Compactness.} Let $\{u_j\}_{j\in \N} \subset L^p(\O_\d;\R^m)$ with $
\sup_{j \in \N}\Fcal_{s_j}(u_j) < \infty.$
This implies $u_j \in H^{s_j,p,\d}_0(\O;\R^m)$ for all $j\in \N$ and by the coercivity bound on $f$, also $$\sup_{j \in \N} \norm{D^{s_j}_\d u_j}_{L^p(\O;\Rmn)} < \infty.$$ We can therefore use Lemma~\ref{le:ordercompactness} to deduce the existence of a non-relabeled subsequence $\{u_j\}_{j\in \N}$ and $u \in H^{s',p,\d}_0(\O;\R^m)$ with 
$u_j\weakly u$ in $L^p(\Omega_\delta;\R^m)$ and
\begin{align*}
D^{s_j}_\d u_j \weakto D^{s'}_\d u \ \ \text{in $L^p(\O;\R^{m\times n})$ as $j\to \infty$.}
\end{align*}
For $s'=0$, this already shows the claim. If $s'\in (0,1]$, we exploit in addition the continuous embedding $H^{s, p, \delta}_0(\Omega;\R^m)\hookrightarrow H_0^{t, p,\delta}(\Omega;\R^m)$ for $s, t\in [0,1]$ with $s\geq t$, which follows in light of Corollary~\ref{cor:orderinequality}.
Then, $u_j\weakly u$ in $H^{t, p, \d}(\Omega;\R^m)$ for $0 <t < \inf_{j \in \N} s_j$ and hence, $u_j\to u$ in $L^p(\Omega_\d;\R^m)$ by the compactness result in~\cite[Theorem~6.1 and~7.3]{BeCuMC22}. \smallskip

\textit{Step 2: Liminf-inequality for weakly converging sequences.} Let $u \in L^p(\O_\d;\R^m)$ and $\{u_j\}_{j\in \N} \subset L^p(\O_\d;\R^m)$ with $u_j \weakly u$ in $L^p(\O_\d;\R^m)$. Assuming without loss of generality that
\[
\liminf_{j\to \infty}\Fcal_{s_j}(u_j)=\lim_{j\to \infty} \Fcal_{s_j}(u_j) < \infty
\]
yields $u_j \in H^{s_j,p,\d}_0(\O;\R^m)$ for $j\in \N$ and by the coercivity bound on $f$, also 
\[
\sup_{j\in \N} \norm{D^{s_j}_\d u_j}_{L^p(\O;\Rmn)} < \infty.
\]
Hence, Lemma~\ref{le:ordercompactness} applies, which shows that $u\in H^{s', p, \delta}_0(\Omega;\R^m)$ with
\begin{equation}\label{eq:diagcompact}
D^{s_j}_\d u_j \weakto D^{s'}_\d u \ \ \text{in $L^p(\O;\R^n)$} 
\quad \text{and} \quad D^{s_j}_\d u_j \to D^{s'}_\d u \ \ \text{a.e.~in~$\O \setminus \O_{-\d}$ as $j \to \infty$}.
\end{equation}

Defining
\begin{align*}
v_j = \begin{cases} Q^{s_j}_\d * u_j & \text{if $s_j\neq 1$,} \\ u_j &\text{if $s_j=1$}\end{cases}\ \text{for $j\in \N$} \quad \text{and} \quad v = \begin{cases} Q^{s'}_\d * u &\text{if $s'\neq 1$},\\ u & \text{if $s'=1$,}\end{cases}
\end{align*}
we conclude from Theorem~\ref{prop:connection}\,$(i)$ that $\{v_j\}_{j\in \N} \subset W^{1,p}(\O;\R^m)$ and $v \in W^{1,p}(\O;\R^m)$ with
\begin{equation}\label{eq:uv}
\nabla v_j = D^{s_j}_\d u_j \ \text{on $\O$ for $j \in\N$} \quad \text{and} \quad \nabla v = D^{s'}_\d u \ \text{on $\O$}.
\end{equation}
Moreover, as $\sup_{t \in [0,1)}\norm{Q^{t}_\d}_{L^1(\R^n)}<\infty$ by Lemma~\ref{le:qnorm}, the sequence $\{v_j\}_{j\in \N}$ is bounded in $W^{1,p}(\O;\R^m)$.  In account of \eqref{eq:diagcompact} and \eqref{eq:uv}, one can find a non-relabeled subsequence with $v_j \weakto v$ in $W^{1,p}(\O;\R^m)$, after a suitable choice of translations. 
The quasiconvexity (in $\O_{-\d}$) and $p$-growth of $f$ then allow us to invoke a well-known weak lower semicontinuity result (cf.~e.g.,~\cite[Theorem~8.11]{Dac08}) to infer
\begin{equation}\label{eq:insidineq}
\begin{split}
\int_{\O_{-\d}}f(x,D^{s'}_\d u)\,dx &= \int_{\O_{-\d}} f(x,\nabla v)\,dx\\
&\leq \liminf_{j \to \infty} \int_{\O_{-\d}} f(x,\nabla v_j)\,dx = \liminf_{j \to \infty} \int_{\O_{-\d}} f(x,D^{s_j}_\d u_j)\,dx.
\end{split}
\end{equation}

On the other hand, in view of the pointwise convergence from \eqref{eq:diagcompact} and the fact that $f$ is Carath\'{e}odory and bounded from below by a constant, we may use Fatou's lemma to deduce
\begin{equation}\label{eq:outsidineq}
\liminf_{j \to \infty} \int_{\O \setminus \O_{-\d}} f(x,D^{s_j}_\d u_j)\,dx \geq \int_{\O \setminus \O_{-\d}} f(x,D^{s'}_\d u)\,dx. 
\end{equation}
Summing \eqref{eq:insidineq} and \eqref{eq:outsidineq} shows
\[ \liminf_{j \to \infty} \Fcal_{s_j}(u_j) \geq 
\Fcal_{s'}(u), 
\]
as desired. \smallskip

\textit{Step 3: Strongly converging recovery sequences.} Our construction relies on the uniform convergence of the nonlocal gradients in Lemma~\ref{le:orderconvergencesmooth}. The rest follows then via a standard density and diagonalization argument. 

To be precise, let us consider $u \in H^{s',p,\d}_0(\O;\R^m)$, otherwise the limsup-inequality is immediate due to $\Fcal_{s'}(u)=\infty$. 
 By the definition of $H^{s',p,\d}_0(\O;\R^m)$, there is a sequence $\{u_k\}_{k \in \N} \subset C_c^{\infty}(\O_{-\d};\R^m)$ with 
\begin{align*}
u_k \to u \quad\text{ in $H^{s',p,\d}(\O;\R^m)$ as $k \to \infty$. }
\end{align*}
For each $k\in \N$, Lemma~\ref{le:orderconvergencesmooth} shows $D^{s_j}_\d u_k \to D^{s'}_\d u_k$ in $L^p(\O;\Rmn)$ as $j \to \infty$, and we conclude from Lebesgue's dominated convergence theorem combined with the growth bound on $f$ that
\[
\lim_{j \to \infty} \Fcal_{s_j}(u_k)=\lim_{j \to \infty}\int_{\O}f(x,D^{s_j}_\d u_k)\,dx=\int_{\O}f(x,D^{s'}_\d u_k)\,dx=\Fcal_{s'}(u_k).
\]
Since $D^{s'}_\d u_k \to D^{s'}_\d u$ in $L^p(\O;\Rmn)$ as $k \to \infty$, an analogous reasoning gives $ \lim_{k \to \infty} \Fcal_{s'}(u_k) =\Fcal_{s'}(u)$. Altogether, we have that $u_k\to u$ in $L^p(\Omega_\d;\R^m)$ and
\[
\lim_{k \to \infty} \lim_{j \to \infty} \Fcal_{s_j}(u_k) =\Fcal_{s'}(u).
\]
Extracting a suitable diagonal sequence $\{u_{k_j}\}_{j\in \N}$ via Attouch's lemma finishes the proof.
\end{proof}

\begin{remark}
a) We remark that the two $\Gamma$-convergence statements in~\eqref{Gammalimit12} are equivalent to the $L^p$-Mosco-convergence of the family $\{\Fcal_s\}_{s}$ to $\Fcal_{s'}$. \smallskip

b) Note that one cannot expect strong $L^p$-compactness for $\{\Fcal_s\}_s$ as $s \to 0$, considering that $H^{0,p,\d}(\O;\R^m) = L^p(\O_\d;\R^m)$ with equivalent norms (cf.~Remark~\ref{rem:singular}). \smallskip

c) Throughout this paper, we work with the sequential definition of $\Gamma$-limits, which may differ in general from the topological definition for non-metric spaces. However, the equi-coerciveness of the family $\{\Fcal_s\}_s$ in $L^p(\O_{\d};\R^m)$ (in fact, $\Fcal_{s}(u) \geq c'\norm{u}_{L^p(\O_\d;\R^m)}-C$ for all $u\in L^p(\O_{\d};\R^m)$ due to Theorem~\ref{th:poincareindep}) and the metrizability of the weak $L^p$-topology on norm bounded sets guarantee that the sequential $\Gamma(w\text{-}L^p)$-limit coincides with the topological one, see~e.g.,~\cite[Proposition~8.10]{Dal93}. \smallskip

d) It is not hard to see that an analogous statement to Theorem~\ref{theo:Gammalimits} holds also for more general complementary values other than zero, e.g., for $g\in H^{1, p, \delta}(\Omega;\R^m)$. \smallskip

e) Under the additional assumptions required in the relaxation result of Corollary~\ref{cor:relaxation}, we can prove $\Gamma$-convergence for $\{\Fcal_s\}_s$ as $s\to s'\in (0,1]$ also in the case when $f$ is a homogeneous integrand that is not necessarily quasiconvex.  
Indeed, by first relaxing the functionals (cf.~\cite[Proposition~6.11]{Dal93}), we find 
\begin{align*}
\Gamma(L^p)\text{-}\lim_{s\to s'} \Fcal_{s} = \Gamma(L^p)\text{-}\lim_{s\to s'} \Fcal_{s}^{\rm rel} = \Fcal_{s'}^{\rm rel};
\end{align*}
here, $\Fcal_s^{\rm rel}$ is given by the relaxation formula~\eqref{relaxationformula} for $s\in (0,1)$, which extends also to the case $s=1$ because of classical relaxation theory.
\end{remark}

\begin{appendix}

\section{Comparison with the Riesz potential kernel }

To provide the technical basis for quantitative comparisons between the convolution kernel that can be used to represent the nonlocal gradient 
and the Riesz potential kernel, which plays the analogous role for the Riesz fractional gradient, we collect here several useful properties about the quantity
$$R_\delta^s=Q_\delta^s-I_{1-s}$$
with $s\in [0,1)$.

Recalling~the definitions of $Q_\delta^s$ and $I_{1-s}$ in~\eqref{Qdeltas} and~\eqref{Rieszkernel}, \eqref{Rieszkernel2}, respectively, we can represent $R_\delta^s$ as
\begin{align*}
R^{s}_\d(x) 
&= \begin{cases}
\displaystyle c_{n,s}\int_{\abs{x}}^{\infty} \frac{\overline{w}_\delta(t)-1}{t^{n+s}}\,dt &\quad\text{if $n+s-1>0$},\\[0.5cm]
\displaystyle c_{1,0}\int_{\abs{x}}^{\infty}\frac{\overline{w}_\d(t)}{t}\,dt+\frac{1}{\pi}\log(\abs{x}) &\quad\text{if $n=1$ and $s=0$}
\end{cases}
\end{align*}
for $x\in \Rn\setminus\{0\}$; note that $c_{n, s} = \frac{n+s-1}{\gamma_{n, 1-s}}$ and $c_{1,0}=\frac{1}{\pi}$. 
As a consequence,
\begin{equation}\label{eq:gradR}
\nabla R^s_\d(x) = c_{n,s}(1-w_\delta(x))\frac{x}{\abs{x}^{n+s+1}} \quad \text{for $x\in \R^{n}$,}
\end{equation}
for all $n \geq 1$ and $s \in [0,1)$. Observe that $\nabla R_\delta^s \in L^1(\R^n;\R^n)\cap C^\infty(\R^n;\R^n)$ for $s\in (0,1)$ and 
\begin{align}\label{eq:comparison_nonlocalfrac0}
D^s_\delta\varphi -D^s\varphi 
= \nabla ((Q^s_\d-I_{1-s})*\phi)=\nabla R_\delta^s\ast \varphi \quad \text{for all $\varphi\in C_c^\infty(\R^n)$.}
\end{align}
Since $1-w_\delta$ is zero near the origin by \ref{itm:h3}, $R^s_\d$ is constant near the origin.

The Fourier transform of $R_\delta^s$ for any $s \in [0,1)$ satisfies
\begin{align}\label{Rhatsd}
\widehat{R}^s_\d(\xi) =\widehat{Q}^s_\d(\xi) - \frac{1}{\abs{2\pi\xi}^{1-s}} \qquad \text{for $\abs{\xi}\geq 1$};
\end{align}
if $n+s-1>0$, this follows directly from the well-known formula for $\widehat{I}_{1-s}$ 
(see e.g.,~\cite[Theorem~2.4.6]{Gra14a}), and also the case $n=1$ and $s=0$ is standard; one can argue via the fact that the distributional derivative of $-\frac{1}{\pi}\log(\abs{\cdot})$ corresponds to the distribution
\[
\eta \mapsto \lim_{r \downarrow 0} \int_{(-r,r)^c} \frac{\eta(x)}{x}\,dx \quad \text{for $\eta \in \Scal(\R)$},
\]
whose Fourier transform equals $i\rm{sgn}$ (see e.g.,~\cite[Eq.~(5.1.12)]{Gra14a}); note that in this case $\widehat{R}^s_\d$ is only a tempered distribution on $\R^n$, but for convenience we view it as a function outside $B(0,1)$.

The following auxiliary result establishes estimates on the decay behavior of the Fourier transform of $R_\delta^s$ and its derivatives.

\begin{lemma}\label{le:decayR} 
Let $s\in [0,1)$ and let $\beta, \omega \in \N^n_0$ be multi-indices. 
Then, there exists a constant $C>0$ 
 independent of $s$ such that
\begin{equation}\label{eq:decayRhat}
\abs{\xi}^{\abs{\beta}} \absb{\partial^{\omega} \widehat{R}^s_\d(\xi)} \leq C \quad \text{for all $\abs{\xi} \geq 1$}.
\end{equation}
\end{lemma}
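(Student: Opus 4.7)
The proof rests on the decomposition
\[
R^s_\delta = P^s_\delta - (1-w_\delta) I_{1-s}, \qquad P^s_\delta := Q^s_\delta - w_\delta I_{1-s},
\]
which isolates a compactly supported piece from a smooth but slowly decaying tail. The first step is to verify that $P^s_\delta \in C_c^\infty(\R^n)$ with support in $\overline{B(0,\delta)}$ and with $\|\partial^\alpha P^s_\delta\|_{L^\infty(\R^n)}$ bounded uniformly in $s \in [0,1)$ for every multi-index $\alpha$. Compact support is immediate, since both $Q^s_\delta$ and $w_\delta$ vanish outside $\overline{B(0,\delta)}$. Away from the origin, smoothness and uniform $C^k$-control follow from the explicit formulas for $Q^s_\delta$ and $I_{1-s}$, combined with the uniform boundedness of $c_{n,s}$ on $[0,1]$ and of $\gamma_{n,1-s}^{-1}$ on $[0,1)$. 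On the inner ball $B(0, b_0\delta)$, where $w_\delta \equiv 1$, the singularities of $Q^s_\delta$ and $w_\delta I_{1-s}$ cancel exactly: a direct integration of the defining formula for $Q^s_\delta$ yields $Q^s_\delta - I_{1-s} \equiv R^s_\delta(0)$ there, with the constant $R^s_\delta(0)$ bounded uniformly in $s$ (the logarithmic case $n=1,\,s=0$ is handled by a separate computation using \eqref{Rieszkernel2}). Smoothness across $|x| = b_0\delta$ follows from the smoothness of $R^s_\delta$ on $\R^n$ already noted in the excerpt below \eqref{eq:gradR}.

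As tempered distributions (and as smooth functions on $\{\xi \neq 0\}$) the decomposition yields, for $|\xi| \geq 1$,
\[
\widehat{R^s_\delta}(\xi) = \widehat{P^s_\delta}(\xi) - \widehat{(1-w_\delta) I_{1-s}}(\xi),
\]
and it suffices to bound each summand. For $\widehat{P^s_\delta}$, the standard Schwartz-type estimate
\[
|\xi|^{|\beta|}\,|\partial^\omega \widehat{P^s_\delta}(\xi)| \leq C\!\!\!\sum_{|\beta'|=|\beta|}\!\!\|\partial^{\beta'}(x^\omega P^s_\delta)\|_{L^1(\R^n)} \leq C_{\beta,\omega,\delta}
\]
holds uniformly in $s$ on all of $\R^n$, because $P^s_\delta$ has compact support in $\overline{B(0,\delta)}$ with uniformly bounded $C^k$-norms.

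For the second summand, set $g^s := (1-w_\delta)I_{1-s}$. Since $1 - w_\delta$ vanishes on $B(0, b_0\delta)$, one has $g^s \in C^\infty(\R^n)$, and $\partial^\alpha g^s(x) = \BigO(|x|^{-(n+s-1+|\alpha|)})$ at infinity with constants independent of $s$. In particular, $x^\gamma \Delta^k g^s \in L^1(\R^n)$ with norm bounded uniformly in $s$ whenever $2k \geq |\gamma|+2$. Starting from the distributional identity $|\xi|^{2k}\widehat{g^s}(\xi) = (-4\pi^2)^{-k}\widehat{\Delta^k g^s}(\xi)$ and differentiating in $\xi$,
\[
\partial^\omega_\xi\bigl(|\xi|^{2k}\widehat{g^s}(\xi)\bigr) = (-4\pi^2)^{-k}\,\widehat{(-2\pi i x)^\omega \Delta^k g^s}(\xi)
\]
is globally bounded uniformly in $s$ as soon as $2k \geq |\omega|+2$. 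Expanding the left-hand side via Leibniz and inducting on $|\omega|$ to peel off the lower-order contributions $\partial^\gamma(|\xi|^{2k})\,\partial^{\omega-\gamma}\widehat{g^s}(\xi)$ with $\gamma \neq 0$, each controlled for $|\xi|\geq 1$ by the inductive hypothesis together with $|\partial^\gamma(|\xi|^{2k})| \leq C_{k,\gamma}|\xi|^{2k-|\gamma|}$, yields $|\xi|^{2k}|\partial^\omega \widehat{g^s}(\xi)| \leq C_{k,\omega}$ for $|\xi|\geq 1$. Choosing $2k \geq |\beta|+|\omega|+2$ gives the desired bound.

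The main obstacle is the uniform $C^k$-control of $P^s_\delta$ across the transition $|x| = b_0\delta$: individually, both $Q^s_\delta$ and $w_\delta I_{1-s}$ possess $I_{1-s}$-type singularities at the origin, and the smoothness (with $s$-uniform bounds) of the difference hinges on their precise cancellation, which also dictates why the logarithmic case $n=1,\,s=0$ requires separate bookkeeping.
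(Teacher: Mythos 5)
Your proof takes a genuinely different route from the paper's: the paper bounds $\norm{\partial^{\beta}\bigl((-2\pi i\,\cdot)^{\omega}R^s_\d\bigr)}_{L^1}$ directly, working only with $\nabla R^s_\d$ and its derivatives, whereas you split $R^s_\d$ into a compactly supported smooth piece and a slowly decaying tail and estimate each Fourier transform separately. Unfortunately the split has a genuine gap in dimension $n=1$. Your argument relies on the claim that $\gamma_{n,1-s}^{-1}$ is uniformly bounded on $s\in[0,1)$. This is false for $n=1$: since $\Gamma(s/2)\sim 2/s$ as $s\to 0^+$, one has $\gamma_{1,1-s}^{-1}\sim (\pi s)^{-1}\to\infty$. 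As a consequence, the individual pieces in your decomposition are \emph{not} uniformly bounded in $s$. Concretely, a short computation gives $\nabla P^s_\d = -(\nabla w_\d)\,I_{1-s}$, so already the first derivative of $P^s_\d$ blows up like $\gamma_{1,1-s}^{-1}$ on the annulus $B(0,\d)\setminus B(0,b_0\d)$, and likewise the constant value $P^s_\d\equiv R^s_\d(0)$ on $B(0,b_0\d)$ is unbounded as $s\to 0^+$. The same blow-up contaminates the Leibniz terms $(\partial^{\gamma}w_\d)\,I_{1-s}$ in your bound for $\Delta^k g^s$. Flagging the pair $(n,s)=(1,0)$ for "separate bookkeeping" does not help, because the obstruction is uniformity as $s\to 0^+$, not the endpoint itself.

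The reason the lemma is nevertheless true uniformly in $s$ is precisely the cancellation your decomposition destroys: what controls $\nabla R^s_\d = c_{n,s}(1-w_\d)\,x/\abs{x}^{n+s+1}$ is the prefactor $c_{n,s}=(n+s-1)/\gamma_{n,1-s}$, which \emph{is} uniformly bounded on $[0,1)$ (the factor $n+s-1$ exactly compensates the degeneracy of $\gamma_{n,1-s}$ at $n=1$, $s\to0$). The paper's proof only ever manipulates $\nabla R^s_\d$ and its derivatives — restricting without loss to $\abs{\beta}\ge\abs{\omega}+2$, as $\abs{\xi}\ge 1$ — so $I_{1-s}$ itself never appears undifferentiated. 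To salvage your approach for $n=1$ you would need to renormalize, replacing $I_{1-s}$ by $I_{1-s}-\gamma_{1,1-s}^{-1}$ (which converges to the logarithmic kernel as $s\to 0$), so that both pieces of the decomposition become uniformly controlled; as written there is a hole. For $n\ge 2$ your argument appears sound, again provided you restrict to $\abs{\beta}\ge\abs{\omega}+2$ so that $P^s_\d$ is always hit by at least two derivatives in the $L^1$-norms.
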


\begin{proof}Throughout the proof, we use $C$ to denote possibly different constants that do not depend on $s$; note in particular, that we may ignore the constant $c_{n,s}$, since it is bounded for $s \in [0,1)$.
Additionally, we may restrict to the case $\abs{\beta} \geq \abs{\omega}+2$ since $\abs{\xi}\geq 1$. 

We observe first that by the boundedness of the Fourier transform from $L^1(\R^n;\C)$ to $C_0(\R^n;\C)$ in combination with standard properties of the interaction between the Fourier transform with derivatives (see e.g.,~\cite[Proposition~2.3.22\,(8)-(9)]{Gra14a}), the claim follows as soon as
\begin{align}\label{est_aux}
\normb{\partial^{\beta}\bigl((-2\pi i \, \cdot)^{\omega}R^s_\d\bigr)}_{L^1(\R^n;\C)} \leq C<\infty,
\end{align}
is established. The argument, which is detailed below, relies on repeated use of the Leibniz rule and exploits the representation~\eqref{eq:gradR}. 

 Let $\gamma, \gamma', \gamma'', \tau\in \N_0^n$ in the following be multi-indices not exceeding the order of $\beta$. A straightforward calculation shows
  that
\[
\abslr{\partial^{\gamma'} \left( \frac{x}{\abs{x}^{n+s+1}}\right)} \leq  \frac{C}{\abs{x}^{n+s+\abs{\gamma'}}}
\]
for $x\in \R^n\setminus\{0\}$, and we have due to~\ref{itm:h2} and~\ref{itm:h3} that  
$\partial^{\gamma''} w_{\delta}=0$  outside of the annulus
$A_\d := B(0,\d) \setminus B(0,b_0\d)$ if $\gamma''\neq 0$.
Hence, 
\begin{align*}
\abslr{\partial^{\gamma''}w_{\delta}(x)\partial^{\gamma'} \left( \frac{x}{\abs{x}^{n+s+1}}\right)}& \leq \frac{C}{(b_0\delta)^{n+s+\abs{\gamma'}}} \mathbbm{1}_{A_\d}(x)  \\ &\leq C \bigl((b_0\delta)^{-n-1-|\beta|}+1\bigr) \mathbbm{1}_{A_\d}(x) \leq C \mathbbm{1}_{A_\d}(x). 
\end{align*}
This allows us to infer in view of~\eqref{eq:gradR}, the Leibniz rule, and again~\ref{itm:h3}, that  
\begin{align}\label{est_Rgamma}
\bigl|\partial^{\gamma}R^s_\d (x)\bigr|
 \leq C\Bigl(\abs{1-w_\d(x)}\frac{1}{\abs{x}^{n+s+\abs{\gamma}-1}}+\mathbbm{1}_{A_\d}(x)\Bigr)
 \leq C\left(\frac{\mathbbm{1}_{B(0,b_0\d)^c}(x)}{\abs{x}^{n+s+\abs{\gamma}-1}}+\mathbbm{1}_{A_\d}(x)\right) 
\end{align} 
for $\gamma\neq 0$. 
Moreover,  if $\abs{\tau} \leq \abs{\omega}$,
we have
\begin{align}\label{est_tau}
\abslr{\partial^{\tau}(-2\pi i x)^{\omega}} \leq C\abs{x}^{\abs{\omega}-\abs{\tau}},
\end{align}
and $\partial^{\tau}(-2\pi i x)^{\omega} = 0$ for $\abs{\tau} > \abs{\omega}$. 

 Another application of Leibniz' rule together with~\eqref{est_Rgamma} and~\eqref{est_tau} finally yields for $x\in \R^n\setminus\{0\}$ that
\begin{align*}
\bigl|\partial^{\beta}\bigl((-2\pi i x)^{\omega}R^s_\d(x)\bigr)\bigr|
&\leq C\left(\frac{\mathbbm{1}_{B(0,b_0\d)^c}(x)}{\abs{x}^{n+s+\abs{\beta}-\abs{\omega}-1}}+\mathbbm{1}_{A_\d}(x)\right). 
\end{align*}
It follows now via integration and under consideration of $s+\abs{\beta}-\abs{\omega}-1 \geq 1$ that

\[
\normb{\partial^{\beta}((-2\pi i\, \cdot)^{\omega}R^s_\d)}_{L^1(\R^n;\C)}  \leq C\left(\frac{(b_0\delta)^{-s-\abs{\beta}+\abs{\omega}+1}}{s+\abs{\beta}-\abs{\omega}-1}+\d^n\right)  \leq C\bigl((b_0\d)^{\abs{\omega}-\abs{\beta}}+1+\d^n\bigr), 
\]
which gives~\eqref{est_aux}.
\end{proof}

\section{Proof of density results}\label{appendix: density}
This part of the appendix is devoted to proving the density result stated in Theorem~\ref{th:density}. We begin with a lemma on the Leibniz rule for the distributionally defined spaces $\Hspd(\O)$. It serves as a technical tool for proving the approximate extension and retraction results stated afterwards.

\begin{lemma}\label{le:cutoff}
Let $s\in [0,1)$, $p \in [1,\infty)$ and $\O \subset \R^n$ be open and bounded. Further, let $u \in \Hspd(\O)$, identified with its extension by zero, and $\chi \in C_c^{\infty}(\R^n)$. If $\Omega'\subset \R^n$ is an open and bounded set such that
\begin{align}\label{ass:Omega}
(\O'\setminus \O) \cap \supp (\chi) = \varnothing,
\end{align}
then $\chi u \in \Hspd(\O')$.
\end{lemma}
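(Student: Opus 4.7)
The natural candidate for $D^s_\delta(\chi u) \in L^p(\Omega';\R^n)$ suggested by the smooth Leibniz rule in Lemma~\ref{le:leibniz} is
\[
v := \chi\, D^s_\delta u + K_\chi(u),
\]
where $u$ is identified with its extension by zero to $\R^n$. First I would check that $v$ is well-defined in $L^p(\Omega';\R^n)$: the support hypothesis $(\Omega'\setminus\Omega)\cap \supp(\chi)=\varnothing$ forces $\chi=0$ on $\Omega'\setminus\Omega$, so $\chi\, D^s_\delta u$ is unambiguously defined on $\Omega'$ (as the pointwise product on $\Omega'\cap\Omega$ and zero elsewhere) and sits in $L^p(\Omega';\R^n)$. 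A direct inspection of the argument for the Lipschitz-type bound for $K_\chi$ (compare \cite[Lemma 3.2, 3.3]{BeCuMC22b}) shows that the analogous bound $\|K_\chi(u)\|_{L^p(O;\R^n)} \leq C\operatorname{Lip}(\chi)\|u\|_{L^p(\Omega_\delta)}$ holds for any open set $O$, since the estimate uses only the structure of the kernel; applying this with $O=\Omega'$ gives $K_\chi(u)\in L^p(\Omega';\R^n)$ and also shows $\chi u\in L^p(\Omega'_\delta)$ trivially.

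The heart of the proof is then the verification of the distributional identity from Definition~\ref{def: nonlocal gradient 2}: for every $\psi\in C_c^\infty(\Omega';\R^n)$,
\[
\int_{\Omega'} v\cdot \psi\, dx = -\int_{\Omega'_\delta} \chi u\, \Div^s_\delta\psi\, dx.
\]
The crucial point is that the support hypothesis forces $\chi\psi \in C_c^\infty(\Omega;\R^n)$ — indeed $\supp(\chi\psi)\subset \supp(\chi)\cap\Omega'\subset\Omega$ — so that $\chi\psi$ is an admissible test for the weak nonlocal gradient of $u$ on $\Omega$:
\[
\int_\Omega D^s_\delta u\cdot (\chi\psi)\, dx = -\int_{\Omega_\delta} u\, \Div^s_\delta(\chi\psi)\, dx.
\]
On the right-hand side, splitting $\chi(x)\psi(x)-\chi(y)\psi(y)=\chi(x)(\psi(x)-\psi(y)) + (\chi(x)-\chi(y))\psi(y)$ in the definition of $\Div^s_\delta(\chi\psi)$ gives the pointwise identity
\[
\Div^s_\delta(\chi\psi)(x) = \chi(x)\Div^s_\delta\psi(x) + L_\chi(\psi)(x),
\]
where $L_\chi(\psi)$ is the natural commutator term. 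A Fubini argument together with the radiality of $w_\delta$ and a relabelling $x\leftrightarrow y$ identifies $\int u\, L_\chi(\psi)\, dx$ with $\int_{\Omega'} K_\chi(u)\cdot\psi\, dx$, producing exactly the $K_\chi(u)$ contribution in $v$. Finally, since $\Div^s_\delta\psi$ is supported in $\overline{\Omega'_\delta}$, replacing the integration domain on the right-hand side by $\Omega'_\delta$ is free.

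\textbf{Main obstacle.} The only genuinely delicate point is the Fubini/variable-swap step at the level of $L^p$ integrability rather than smooth functions: one must ensure the double integral in $x,y$ is absolutely convergent. This follows from the kernel bound $w_\delta(\cdot)/|\cdot|^{n+s-1}\in L^1(\R^n)$ combined with $\operatorname{Lip}(\chi)<\infty$, which is exactly the ingredient that underlies the continuity of $K_\chi$ used above. All other steps are bookkeeping with the support condition, which is precisely tuned to make every product $\chi\, D^s_\delta u$, $\chi\psi$, $\chi u\cdot \Div^s_\delta\psi$ sit in the correct ambient set.
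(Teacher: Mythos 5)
Your proposal is correct and follows essentially the same route as the paper's proof: you test against $\psi\in C_c^\infty(\Omega';\R^n)$, observe that the support hypothesis \eqref{ass:Omega} makes $\chi\psi$ an admissible test function in $C_c^\infty(\Omega;\R^n)$ for the weak nonlocal gradient of $u$ on $\Omega$, invoke the Leibniz rule for $\Div^s_\delta$, and use Fubini together with the radiality of $w_\delta$ to move the commutator from $\psi$ onto $u$, recovering $D^s_\delta(\chi u)=\chi D^s_\delta u+K_\chi(u)$. The only superficial difference is that you re-derive the divergence Leibniz identity rather than citing \eqref{Leibniz_div}, and you spell out the $x\leftrightarrow y$ symmetrization that the paper compresses into a reference to the boundedness of $K_\chi$; both are the same argument.
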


\begin{proof}
Clearly, $\chi u \in L^p(\O'_\delta)$. To determine the weak nonlocal gradient, we calculate for any $\phi \in C_c^{\infty}(\O';\R^n)$ that
\begin{align*}
 \int_{\O'_\d}(\chi u) \Div^s_\d \phi \,dx &=\int_{\O'_\d} u \bigl(\Div^s_\d(\chi \phi)-K_{\chi}(\phi^\intercal)\bigr)\,dx\\
&=-\int_{\O'}D^s_\d u \cdot (\chi \phi)\,dx - \int_{\O'_\d}uK_{\chi}(\phi^\intercal)\,dx\\
&=-\int_{\O'} \chi D^s_\d u \cdot \phi+K_{\chi}(u)\cdot \phi\,dx. 
\end{align*}
Indeed, the first line exploits the Leibniz rule for the nonlocal divergence in~\eqref{Leibniz_div}, while the second line follows directly from the formula defining the weak nonlocal gradient, which is valid here since $\chi \phi \in C_c^{\infty}(\O;\R^n)$ in light of the assumption~\eqref{ass:Omega}. For the third equality, we have used Fubini's theorem and the boundedness of $K_{\chi}: L^p(\O'_\d) \to L^p(\O';\R^n)$ according to Lemma~\ref{le:leibniz}. 

The calculation above shows that $D^s_\d (\chi u) = \chi D^s_\d u + K_\chi(u)$ on $\O'$, and hence, $u \in \Hspd(\O')$.
\end{proof}

The next auxiliary results will be useful in the proofs of Theorem~\ref{th:density} and Proposition~\ref{prop:densitycomplement} to generate room for mollification arguments. The techniques are similar to the proof of \cite[Theorem~3.9]{Sch22}.
 
\begin{lemma}[Approximate extension and retraction]\label{le:approximateextension} Let $s\in [0,1)$, $p \in [1,\infty)$, and let $\Omega\subset \R^n$  be an open and bounded set. 
\begin{itemize}
\item[$(i)$] If $\Omega$ is Lipschitz, then for any $\eps>0$ and $u\in H^{s, p, \delta}(\Omega)$ there exists $\Omega'\Supset \Omega$ and $u_\eps\in H^{s, p, \delta}(\Omega')$ such that 
\begin{align*}
\norm{u-u_\eps}_{H^{s, p, \delta}(\Omega)}<\eps.
\end{align*}

\item[$(ii)$] If $\Omega_{-\d}$ is Lipschitz, then for any $\eps>0$ and $u\in H_0^{s, p, \delta}(\Omega)$ there exists $u_\eps\in H_0^{s, p, \delta}(\Omega)$ with $\supp(u_\eps)\Subset \Omega_{-\delta}$ and 
\begin{align*}
\norm{u-u_\eps}_{H^{s, p, \delta}(\Omega)}<\eps.
\end{align*}
\end{itemize}
\end{lemma}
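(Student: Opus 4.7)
We treat (i) and (ii) with a single strategy: a partition-of-unity plus translation argument driven by the Lipschitz cone condition at the relevant boundary, using Lemma~\ref{le:cutoff} to transport the distributional nonlocal gradient to the perturbed domain. The governing principle is that locally near the boundary one may shift $u$ by a small admissible vector $t\xi_i$, glue such shifts together with a smooth partition of unity, and conclude via $L^p$-continuity of translations.

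For part (i): since $\Omega$ is bounded Lipschitz, so is $\Omega_\delta$. Cover $\partial\Omega_\delta$ by finitely many open balls $B_1,\ldots,B_N$ such that $\partial\Omega_\delta$ is locally a Lipschitz graph in each $B_i$, and extract inward cone directions $\xi_i \in \mathbb{S}^{n-1}$: for some $\alpha>0$ and all $t \in (0,\alpha)$, a slight outer neighborhood of $B_i\cap\Omega_\delta$ is shifted into $\Omega_\delta$ under $\,\cdot\,+ t\xi_i$. Complete the cover with an interior ball $B_0 \Subset \Omega$, take a subordinate smooth partition of unity $\{\chi_i\}_{i=0}^N$, set $\Omega' := \Omega + B(0,\eta)$ with $\eta>0$ sufficiently small, and define (with $u$ extended by zero to $\R^n$)
\[
u_{\varepsilon,t}(x) = \chi_0(x)\, u(x) + \sum_{i=1}^N \chi_i(x)\, u(x + t\xi_i).
\]
By construction $\supp\chi_0 \subset \Omega$, so Lemma~\ref{le:cutoff} yields $\chi_0 u \in H^{s,p,\delta}(\Omega')$; the translated functions satisfy $u(\,\cdot\,+t\xi_i) \in H^{s,p,\delta}(\Omega - t\xi_i)$ with $D^s_\delta(u(\,\cdot\,+t\xi_i))=(D^s_\delta u)(\,\cdot\,+t\xi_i)$, and by the cone condition one can arrange $\Omega' \cap \supp\chi_i \subset \Omega - t\xi_i$, so that Lemma~\ref{le:cutoff} again gives $\chi_i\,u(\,\cdot\,+t\xi_i) \in H^{s,p,\delta}(\Omega')$. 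Hence $u_{\varepsilon,t} \in H^{s,p,\delta}(\Omega')$, and $L^p$-continuity of translations applied to $u$ and $D^s_\delta u$, together with the Leibniz bound of Lemma~\ref{le:leibniz} for the cross terms involving $K_{\chi_i}$, yields $u_{\varepsilon,t} \to u$ in $H^{s,p,\delta}(\Omega)$ as $t \to 0$.

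Part (ii) is parallel, now working at $\partial\Omega_{-\delta}$ with outward cone directions $\nu_i$. Since $u \equiv 0$ a.e.\ on $\Omega_\delta \setminus \Omega_{-\delta}$ by Proposition~\ref{prop:densitycomplement}, each translate $u(\,\cdot\,+t\nu_i)$ vanishes in a $t$-neighborhood of $\partial\Omega_{-\delta}$ on the interior side. Choosing $\chi_0$ with compact support in $\Omega_{-\delta}$ and proceeding as above produces
\[
u_\varepsilon(x) = \chi_0(x)\,u(x) + \sum_{i=1}^N \chi_i(x)\, u(x + t\nu_i)
\]
with $\supp u_\varepsilon \Subset \Omega_{-\delta}$ for $t$ small, since $\bigl(\overline{\Omega_{-\delta}}\cap B_i\bigr) - t\nu_i \Subset \Omega_{-\delta}$. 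Membership in $H^{s,p,\delta}_0(\Omega)$ then follows from Proposition~\ref{prop:densitycomplement}, and the norm estimate is again obtained from translation continuity.

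The main obstacle is the geometric coordination needed so that, simultaneously for every $i$, the hypothesis $(\Omega'\setminus\Omega)\cap\supp\chi_i=\emptyset$ of Lemma~\ref{le:cutoff} is met (with $\Omega$ replaced by $\Omega-t\xi_i$ in the boundary pieces); the Lipschitz cone condition dictates the compatible choices of $\eta$, the ball radii and $t$. Once this geometric set-up is in place, the analytic convergence as $t \to 0$ is routine.
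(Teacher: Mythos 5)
Your overall strategy---a partition of unity near a Lipschitz boundary, local translations provided by the cone condition, gluing via Lemma~\ref{le:cutoff}, and concluding by $L^p$-continuity of translations---is the same as the paper's. However, in part $(i)$ the geometry is anchored at the wrong boundary. The hypothesis of Lemma~\ref{le:cutoff} (for your translated pieces, $\supp(\chi_i)\cap\bigl(\Omega'\setminus(\Omega-t\xi_i)\bigr)=\emptyset$) concerns the strip $\Omega'\setminus\Omega$, a thin neighbourhood of $\partial\Omega$, not of $\partial\Omega_\delta$. You place the balls $B_i$ along $\partial\Omega_\delta$ and take the cone condition there; but since $\Omega'=\Omega+B(0,\eta)$ stays at distance roughly $\delta$ from $\partial\Omega_\delta$ for small $\eta$, on those balls the hypothesis of Lemma~\ref{le:cutoff} holds trivially and the translations accomplish nothing, while the genuine obstruction at $\partial\Omega$ is left entirely uncontrolled. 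The paper's proof covers $\partial\Omega$ with the $\chi_i$'s and exploits the cone condition at $\partial\Omega$, which is exactly what the hypothesis ``$\Omega$ is Lipschitz'' supplies. (Note also that your opening claim ``$\Omega_\delta$ is Lipschitz because $\Omega$ is'' is unjustified and may fail in general.)

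A second gap in part $(i)$: the cover of $\Omega_\delta$ is incomplete. A single ball $B_0\Subset\Omega$ together with balls along $\partial\Omega_\delta$ misses the bulk of $\Omega_\delta\setminus\Omega$, so $\sum_i\chi_i$ cannot equal $1$ on $\Omega_\delta$ and $u_{\varepsilon,t}$ has no reason to converge to $u$. The paper decomposes $\Omega_\delta$ into three kinds of pieces: an interior cut-off $\chi_0\in C_c^\infty(\Omega)$, a boundary layer $\chi_1,\dots,\chi_N$ near $\partial\Omega$ (only these are shifted), and an outer cut-off $\chi_{N+1}\in C_c^\infty(\Omega^c)$ with support away from $\partial\Omega$; your construction is missing the last of these. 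For part $(ii)$ your choice of boundary $\partial\Omega_{-\delta}$ is the correct one (that is where the Lipschitz hypothesis applies), though $B_0$ should be a general open set compactly contained in $\Omega_{-\delta}$ rather than a literal ball so that the cover is complete. Finally, a minor structural difference: the paper translates the whole product $\chi_i u$, whereas you translate only $u$ and multiply by the untranslated $\chi_i$; both variants can be made rigorous, but the verification of the hypothesis of Lemma~\ref{le:cutoff} is not identical for the two.
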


\begin{proof} $(i)$ Given that the boundary of $\Omega$ is locally the graph of a Lipschitz function, we can find a partition of unity $\chi_0, \chi_1, \dots, \chi_{N+1} \subset C_c^{\infty}(\R^n)$ and translation vectors $\zeta_1, \dots, \zeta_N \in \R^n$ such that
\[
\sum_{i=0}^{N+1} \chi_i =1 \ \ \text{on $\O_\d$}, \quad \chi_0 \in C_c^{\infty}(\O), \quad \chi_{N+1} \in C_c^{\infty}(\O^c)
\]
and 
\begin{align}\label{eq:shift}
(\supp (\chi_i) \cap \O^c) +\lambda \zeta_i \Subset \O^c \ \ \text{for $i = 1, \dots,N$}. 
\end{align}
for all $\lambda >0$ small enough. For these $\lambda$, 
we define
\[
v_\lambda := \chi_0 u + \chi_{N+1} u + \sum_{i=1}^N \tau_{\lambda \zeta_i}(\chi_i u)
\]
where $\tau_{\zeta}(v):=v(\,\cdot\,-\zeta)$ denotes translation by $\zeta \in \R^n$ of a function $v:\R^n\to \R$.  
Note that by construction, $v_\lambda\in \Hspd(\O)$ according to Lemma~\ref{le:cutoff}. 

Next, we exploit continuity of the translation operator on $L^p$ and the translation invariance of the nonlocal gradient to find $\lambda_\eps>0$ such that $u_\eps:=v_{\lambda_\eps}$ satisfies
\[
\norm{u-u_\eps}^p_{H^{s, p, \delta}(\O)} \leq \sum_{i=1}^N \norm{\chi_iu-\tau_{\lambda_\eps \zeta_i}(\chi_i u)}^p _{L^p(\O_\d)} +  \sum_{i=1}^N \norm{D^s_\d (\chi_i u)-\tau_{\lambda_\eps \zeta_i}D^s_\d (\chi_i u)}^p_{L^p(\O;\R^n)} <\epsilon^p. 
\]
Finally, if $\O' \Supset \O$ is chosen such that
\[
(\supp (\chi_i) \cap \O^c) +\lambda_\eps \zeta_i \Subset (\O')^c \ \ \text{for $i = 1, \dots,N$} \quad \text{and} \quad \supp (\chi_{N+1}) \Subset (\O')^c,
\]
where the first condition is achievable in view of \eqref{eq:shift}, Lemma~\ref{le:cutoff} implies that even $u_\eps\in H^{s, p, \delta}(\Omega')$, as desired. 
\smallskip

$(ii)$  A similar argument to that in $(i)$ applies here as well, with the main difference in the choice of the  partition of unity, which is now considered for $\O_{-\d}$ and translated inwards instead of outwards as in~\eqref{eq:shift}.
\end{proof}

With these tools at hand, one can now deduce the alternative characterizations for $H^{s, p, \delta}(\Omega)$ and $H_g^{s, p, \delta}(\Omega)$  from Sections~\ref{subsec:nonlocalcalculus} and~\ref{subsec:complementary}, respectively.
\begin{proof}[Proof of Theorem~\ref{th:density}] \textit{Case 1: $\O=\R^n$.} Via a mollification argument we may suppose that $u \in C^{\infty}(\R^n) \cap H^{s,p,\d}(\R^n)$. Take $\chi \in C_c^{\infty}(\R^n)$ with $\chi \equiv 1$ on $B(0,1)$ and define $\chi_j:=\chi(\cdot/j)$ for $j \in \N$. We then find that $\{\chi_j u\}_{j \in \N} \subset C_c^{\infty}(\R^n)$, $\chi_ju \to u$ in $L^p(\R^n)$ and
\begin{align*}
\norm{D^s_\d u - D^s_\d(\chi_j u)}_{L^p(\R^n;\R^n)} \leq \norm{(1-\chi_j)D^s_\d u}_{L^p(\R^n;\R^n)}+C\Lip(\chi_j)\norm{u}_{L^p(\R^n)} \to 0 \ \ \text{as $j \to \infty$},
\end{align*}
where we have used Lemma~\ref{le:leibniz} and the fact that $\Lip(\chi_j) \leq \Lip(\chi)/j$. \smallskip

\textit{Case 2: $\O$ a bounded Lipshitz domain.} Lemma~\ref{le:approximateextension}\,$(i)$ implies for every $j\in \N$ that there is $u_j\in \Hspd(\O'_{j})$ with some appropriately chosen $\O \Subset \O'_{j}$ such that 
\begin{align}\label{est55}
\norm{u-u_j}_{H^{s, p, \delta}(\Omega)}<\frac{1}{2j}.
\end{align}  We are now in the position to use a standard mollification procedure on $u_j$, identified with its extension to $\R^n$ by zero, with mollifying radius smaller than $d(\partial \O, \partial \O'_j)$ to find a $\varphi_j \in C_c^{\infty}(\R^n)$ with 
\begin{align}\label{est66}
\norm{u_j-\varphi_j}_{\Hspd(\O)} < \frac{1}{2j},
\end{align} so that the result follows from~\eqref{est55} and~\eqref{est66} along with the triangle inequality.
\end{proof}
\begin{proof}[Proof of Proposition~\ref{prop:densitycomplement}] Without loss of generality, consider $g=0$. Utilizing a similar strategy as above, one can apply Lemma~\ref{le:approximateextension}\,$(ii)$ and suitably mollify the resulting  function $u_{\epsilon}\in H_0^{s,p, \delta}(\Omega)$ with support compactly contained in $\Omega_{-\delta}$.
\end{proof}
\end{appendix}

\section*{Acknowledgements} 
The authors would like to thank Helmut Abels for sharing his insights about the connection between nonlocal and fractional gradients. JC's research is supported by Fundaci\'on Ram\'on Areces. Part of this research was done while JC was affiliated with the University of
 Castilla-La Mancha. During that time, he was supported by the Spanish {\it Agencia Estatal de Investigaci\'on, Ministerio de Ciencia e Innovaci\'on} through project PID2020-116207GB-I00, {\it Junta de Comunidades de Castilla-La Mancha} through grant SBPLY/19/180501/000110 and European Regional Development Fund 2018/11744. JC also acknowledges the hospitality of the Catholic University of Eichst{\"a}tt-Ingolstadt during his research stay in 2021.


\begin{thebibliography}{10}

\bibitem{AcF84}
E.~Acerbi and N.~Fusco.
\newblock Semicontinuity problems in the calculus of variations.
\newblock {\em Arch. Rational Mech. Anal.}, 86(2):125--145, 1984.

\bibitem{All92}
G.~Allaire.
\newblock Homogenization and two-scale convergence.
\newblock {\em SIAM J. Math. Anal.}, 23(6):1482--1518, 1992.

\bibitem{BeCuMC}
J.~C. Bellido, J.~Cueto, and C.~Mora-Corral.
\newblock Fractional {P}iola identity and polyconvexity in fractional spaces.
\newblock {\em Ann. I. H. Poincar\'e -- AN}, 37:955--981, 2020.

\bibitem{BeCuMC21}
J.~C. Bellido, J.~Cueto, and C.~Mora-Corral.
\newblock {$\Gamma $}-convergence of polyconvex functionals involving
  {$s$}-fractional gradients to their local counterparts.
\newblock {\em Calc. Var. Partial Differential Equations}, 60(1):Paper No. 7,
  29, 2021.

\bibitem{BeCuMC22b}
J.~C. Bellido, J.~Cueto, and C.~Mora-Corral.
\newblock Minimizers of nonlocal polyconvex energies in nonlocal
  hyperelasticity.
\newblock {\em Preprint, arXiv:2211.02640}, 2022.

\bibitem{BeCuMC22}
J.~C. Bellido, J.~Cueto, and C.~Mora-Corral.
\newblock Nonlocal gradients in bounded domains motivated by continuum
  mechanics: Fundamental theorem of calculus and embeddings.
\newblock {\em Preprint, arXiv:2201.08793}, 2022.

\bibitem{BeMCPe}
J.~C. Bellido, C.~Mora-Corral, and P.~Pedregal.
\newblock Hyperelasticity as a {$\Gamma$}-limit of peridynamics when the
  horizon goes to zero.
\newblock {\em Calc. Var. Partial Differential Equations}, 54(2):1643--1670,
  2015.

\bibitem{BeB98}
M.~Bellieud and G.~Bouchitt{\'e}.
\newblock Homogenization of elliptic problems in a fiber reinforced structure.
  {Non} local effects.
\newblock {\em Ann. Sc. Norm. Super. Pisa, Cl. Sci., IV. Ser.}, 26(3):407--436,
  1998.

\bibitem{Bra85}
A.~Braides.
\newblock Homogenization of some almost periodic coercive functional.
\newblock {\em Rend. Accad. Naz. Sci. XL Mem. Mat. (5)}, 9(1):313--321, 1985.

\bibitem{Bra00}
A.~Braides.
\newblock Non-local variational limits of discrete systems.
\newblock {\em Commun. Contemp. Math.}, 2(2):285--297, 2000.

\bibitem{Bra02}
A.~Braides.
\newblock {\em {$\Gamma$}-convergence for beginners}, volume~22 of {\em Oxford
  Lecture Series in Mathematics and its Applications}.
\newblock Oxford University Press, Oxford, 2002.

\bibitem{BrM08}
A.~Braides, M.~Maslennikov, and L.~Sigalotti.
\newblock Homogenization by blow-up.
\newblock {\em Appl. Anal.}, 87(12):1341--1356, 2008.

\bibitem{Comi3}
E.~Bru\`e, M.~Calzi, G.~E. Comi, and G.~Stefani.
\newblock A distributional approach to fractional {S}obolev spaces and
  fractional variation: asymptotics {II}.
\newblock {\em C. R. Math. Acad. Sci. Paris}, 360:589--626, 2022.

\bibitem{Comi1}
G.~E. Comi and G.~Stefani.
\newblock A distributional approach to fractional {S}obolev spaces and
  fractional variation: existence of blow-up.
\newblock {\em J. Funct. Anal.}, 277(10):3373--3435, 2019.

\bibitem{Dac08}
B.~Dacorogna.
\newblock {\em Direct methods in the calculus of variations}, volume~78 of {\em
  Applied Mathematical Sciences}.
\newblock Springer, New York, second edition, 2008.

\bibitem{Dal93}
G.~Dal~Maso.
\newblock {\em An introduction to {$\Gamma$}-convergence}, volume~8 of {\em
  Progress in Nonlinear Differential Equations and their Applications}.
\newblock Birkh\"{a}user Boston, Inc., Boston, MA, 1993.

\bibitem{DuMenTian}
Q.~Du, T.~Mengesha, and X.~Tian.
\newblock Fractional {H}ardy-type and trace theorems for nonlocal function
  spaces with heterogeneous localization.
\newblock {\em Anal. Appl. (Singap.)}, 20(3):579--614, 2022.

\bibitem{DuTi18}
Q.~Du and X.~Tian.
\newblock Stability of nonlocal {D}irichlet integrals and implications for
  peridynamic correspondence material modeling.
\newblock {\em SIAM J. Appl. Math.}, 78(3):1536--1552, 2018.

\bibitem{duon2000}
J.~Duoandikoetxea.
\newblock {\em Fourier analysis}, volume~29 of {\em Graduate Studies in
  Mathematics}.
\newblock American Mathematical Society, Providence, RI, 2001.

\bibitem{FoL07}
I.~Fonseca and G.~Leoni.
\newblock {\em Modern methods in the calculus of variations: {$L^p$} spaces}.
\newblock Springer Monographs in Mathematics. Springer, New York, 2007.

\bibitem{FossPoincare}
M.~Foss.
\newblock Nonlocal poincar\'{e} inequalities for integral operators with
  integrable nonhomogeneous kernels.
\newblock {\em Preprint, arXiv:1911.10292}, 2019.

\bibitem{Gra14a}
L.~Grafakos.
\newblock {\em Classical {F}ourier analysis}, volume 249 of {\em Graduate Texts
  in Mathematics}.
\newblock Springer, New York, third edition, 2014.

\bibitem{HaT23}
Z.~Han and X.~Tian.
\newblock Nonlocal half-ball vector operators on bounded domains: Poincar\'{e}
  inequality and its applications.
\newblock {\em Preprint, arXiv:2212.13720}, 2022.

\bibitem{Hor59}
J.~Horv\'{a}th.
\newblock On some composition formulas.
\newblock {\em Proc. Amer. Math. Soc.}, 10:433--437, 1959.

\bibitem{KrS22}
C.~Kreisbeck and H.~Sch\"{o}nberger.
\newblock Quasiconvexity in the fractional calculus of variations:
  {C}haracterization of lower semicontinuity and relaxation.
\newblock {\em Nonlinear Anal.}, 215:Paper No. 112625, 2022.

\bibitem{LeeDu}
H.~Lee and Q.~Du.
\newblock Nonlocal gradient operators with a nonspherical interaction
  neighborhood and their applications.
\newblock {\em ESAIM Math. Model. Numer. Anal.}, 54(1):105--128, 2020.

\bibitem{Mar85}
P.~Marcellini.
\newblock Approximation of quasiconvex functions, and lower semicontinuity of
  multiple integrals.
\newblock {\em Manuscripta Math.}, 51(1-3):1--28, 1985.

\bibitem{MeD15}
T.~Mengesha and Q.~Du.
\newblock On the variational limit of a class of nonlocal functionals related
  to peridynamics.
\newblock {\em Nonlinearity}, 28(11):3999--4035, 2015.

\bibitem{MeS}
T.~Mengesha and D.~Spector.
\newblock Localization of nonlocal gradients in various topologies.
\newblock {\em Calc. Var. Partial Differential Equations}, 52(1-2):253--279,
  2015.

\bibitem{Mey65}
N.~G. Meyers.
\newblock Quasi-convexity and lower semi-continuity of multiple variational
  integrals of any order.
\newblock {\em Trans. Amer. Math. Soc.}, 119:125--149, 1965.

\bibitem{Morrey}
C.~B. Morrey, Jr.
\newblock Quasi-convexity and the lower semicontinuity of multiple integrals.
\newblock {\em Pacific J. Math.}, 2:25--53, 1952.

\bibitem{Mul87}
S.~M\"{u}ller.
\newblock Homogenization of nonconvex integral functionals and cellular elastic
  materials.
\newblock {\em Arch. Rational Mech. Anal.}, 99(3):189--212, 1987.

\bibitem{Sch22}
H.~Sch\"{o}nberger.
\newblock Extending linear growth functionals to functions of bounded
  fractional variation.
\newblock {\em Preprint, arXiv:2209.13956}, 2022.

\bibitem{Shieh1}
T.-T. Shieh and D.~E. Spector.
\newblock On a new class of fractional partial differential equations.
\newblock {\em Adv. Calc. Var.}, 8(4):321--336, 2015.

\bibitem{Shieh2}
T.-T. Shieh and D.~E. Spector.
\newblock On a new class of fractional partial differential equations {II}.
\newblock {\em Adv. Calc. Var.}, 11(3):289--307, 2018.

\bibitem{Silhavy2019}
M.~{\v{S}}ilhav{\'y}.
\newblock Fractional vector analysis based on invariance requirements (critique
  of coordinate approaches).
\newblock {\em Continuum Mechanics and Thermodynamics}, Jun 2019.

\bibitem{Sil00}
S.~A. Silling.
\newblock Reformulation of elasticity theory for discontinuities and long-range
  forces.
\newblock {\em J. Mech. Phys. Solids}, 48(1):175--209, 2000.

\bibitem{SiLiSel}
S.~A. Silling, D.~J. Littlewood, and P.~Seleson.
\newblock Variable horizon in a peridynamic medium.
\newblock {\em J. Mech. Mater. Struct.}, 10(5):591--612, 2015.

\bibitem{TaoTianDu}
Y.~Tao, X.~Tian, and Q.~Du.
\newblock Nonlocal models with heterogeneous localization and their application
  to seamless local-nonlocal coupling.
\newblock {\em Multiscale Model. Simul.}, 17(3):1052--1075, 2019.

\end{thebibliography}
\end{document}